\DeclareMathOperator{\Cay}{Cay}
\DeclareMathOperator{\Dih}{Dih}
\DeclareMathOperator{\rad}{rad}
\DeclareMathOperator{\diag}{diag}
\newcommand\Osq{\mathbin{\text{\scalebox{.84}{$\square$}}}}
\theoremstyle{plain}
\newtheorem{theorem}{Theorem}[section]
\newtheorem{lemma}[theorem]{Lemma}
\newtheorem{claim}[theorem]{Claim}
\newtheorem{corollary}[theorem]{Corollary}
\newtheorem{proposition}[theorem]{Proposition}
\theoremstyle{definition}
\newtheorem{problem}[theorem]{Problem}
\author[Ivan Damnjanovi\'c]{Ivan Damnjanovi\'c\thanks{The author is supported by the Ministry of Science, Technological Development and Innovation of the Republic of Serbia, grant number 451-03-137/2025-03/200102, and the Science Fund of the Republic of Serbia, grant \#6767, Lazy walk counts and spectral radius of threshold graphs --- LZWK.}}
\title[Vertex-transitive nut graph order--degree existence problem]{Vertex-transitive nut graph\\ order--degree existence problem}
\affiliation{
  Faculty of Mathematics, Natural Sciences and Information Technologies, University of Primorska, Koper, Slovenia\\
  Faculty of Electronic Engineering, University of Niš, Niš, Serbia}
\keywords{nut graph, vertex-transitive graph, Cayley graph, circulant graph, regular graph, dihedral group, cyclotomic polynomial}
\begin{document}
\publicationdata{vol. 28:2}{2026}{6}{10.46298/dmtcs.15989}{2025-07-04; 2025-07-04; 2025-11-24}{2025-12-28}
\maketitle
\begin{abstract}
    A nut graph is a nontrivial simple graph whose adjacency matrix has a simple eigenvalue zero such that the corresponding eigenvector has no zero entries. It is known that the order $n$ and degree $d$ of a vertex-transitive nut graph satisfy $4 \mid d$, $d \ge 4$, $2 \mid n$ and $n \ge d + 4$; or $d \equiv 2 \pmod 4$, $d \ge 6$, $4 \mid n$ and $n \ge d + 6$. Here, we prove that for each such $n$ and $d$, there exists a $d$-regular Cayley nut graph of order $n$. As a direct consequence, we obtain all the pairs $(n, d)$ for which there is a $d$-regular vertex-transitive (resp.\ Cayley) nut graph of order $n$.
\end{abstract}

\section{Introduction}

We consider all the graphs to be undirected, finite and simple, and use $V(G)$ to denote the vertex set of a graph $G$. A \emph{nut graph} is a nontrivial graph such that its adjacency matrix has a simple eigenvalue zero with the corresponding eigenvector having no zero entries. The nut graphs were introduced as a mathematical curiosity in the 1990s by Sciriha and Gutman \cite{Sciriha1997, Sciriha1998_A, Sciriha1998_B, Sciriha1999, ScGu1998}, while the chemical justification for studying these graphs was later discovered through a series of papers \cite{ScFo2007, ScFo2008, FoPiToBoSc2014, FoPiBa2021}. An algorithm for generating nonisomorphic nut graphs was subsequently implemented by Coolsaet, Fowler and Goedgebeur \cite{CoFoGo2018}, while the notion of nut graph was generalized to signed graphs \cite{BaFoPiSci2020} and directed graphs \cite{BaFoMcCaPo2025}. For more results on nut graphs, the reader is referred to \cite{Sciriha2007, Sciriha2008} and the monograph \cite{ScFa2021} by Sciriha and Farrugia.

A \emph{vertex-transitive graph} is a graph $G$ whose automorphism group acts transitively on $V(G)$. For any group $\Gamma$ with the identity $e$ and a subset $C \subseteq \Gamma \setminus \{ e \}$ closed under inversion, by $\Cay(\Gamma, C)$ we denote the graph $G$ such that:
\begin{enumerate}[label=\textbf{(\roman*)}]
    \item $V(G) = \Gamma$; and
    \item any two vertices $u, v \in \Gamma$ are adjacent if and only if $v u^{-1} \in C$.
\end{enumerate}
In this context, we refer to $C$ as the corresponding connection set. A \emph{Cayley graph} is a graph that is isomorphic to $\Cay(\Gamma, C)$ for some finite group $\Gamma$ and connection set $C$. A \emph{circulant graph} is a graph that has an automorphism with a single orbit, or equivalently, it is a Cayley graph where the group $\Gamma$ is cyclic.

Here, we consider several realizability problems concerning the existence of $d$-regular nut graphs of order $n$ belonging to a certain class, for given parameters $d$ and $n$. To this end, for any $d \in \mathbb{N}_0$, let $\mathfrak{N}_d^{\mathrm{reg}}$ be the set of all the $n \in \mathbb{N}$ for which there exists a $d$-regular nut graph of order $n$. Similarly, let $\mathfrak{N}_d^{\mathrm{VT}}$ (resp.\ $\mathfrak{N}_d^{\mathrm{Cay}}$, $\mathfrak{N}_d^{\mathrm{circ}}$) be the set of all the orders attainable by a $d$-regular vertex-transitive (resp.\ Cayley, circulant) nut graph. Clearly,
\[
    \mathfrak{N}_d^{\mathrm{circ}} \subseteq \mathfrak{N}_d^{\mathrm{Cay}} \subseteq \mathfrak{N}_d^{\mathrm{VT}} \subseteq \mathfrak{N}_d^{\mathrm{reg}}
\]
holds for each $d \in \mathbb{N}_0$. We also trivially observe that $\mathfrak{N}_0^{\mathrm{reg}} = \mathfrak{N}_1^{\mathrm{reg}} = \mathfrak{N}_2^{\mathrm{reg}} = \varnothing$.

The study of regular nut graphs was initiated by Gauci, Pisanski and Sciriha through the following order--degree existence problem.

\begin{problem}[\hspace{1sp}{\cite[Problem~12]{GaPiSc2023}}]\label{reg_problem}
    For each degree $d$, determine the set $\mathfrak{N}_d^{\mathrm{reg}}$.
\end{problem}

\noindent
In the same paper, the next initial result was obtained.

\begin{theorem}[\hspace{1sp}{\cite[Theorems~2~and~3]{GaPiSc2023}}]\label{reg_base_th_1}
The following holds:
\[
    \mathfrak{N}^{\mathrm{reg}}_3 = \{12\} \cup \{ n \in \mathbb{N} : \mbox{$n$ is even and } n \ge 18 \} \quad \mbox{and} \quad \mathfrak{N}^{\mathrm{reg}}_4 = \{ 8, 10, 12\} \cup  \{ n \in \mathbb{N} : n \ge 14 \} .
\]
\end{theorem}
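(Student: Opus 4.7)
The plan is to prove each equality as two containments: a necessity argument ruling out all orders not listed, and an existence argument exhibiting a $d$-regular nut graph for every listed order.

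For necessity, I would first use the handshake lemma ($dn \equiv 0 \pmod 2$) to force $2 \mid n$ when $d = 3$. Nut graphs are connected and contain no pendant vertex (otherwise the unique neighbor of the pendant is forced to have a zero entry in the null eigenvector, a contradiction), so $n \ge d+1$ is automatic. The nontrivial obligation is then to exclude the small exceptional orders: $\{4, 6, 8, 10, 14, 16\}$ for $d = 3$ and $\{5, 6, 7, 9, 11, 13\}$ for $d = 4$. The smallest cases can be dispatched by hand (the only cubic graph of order $4$ is $K_4$, with spectrum $\{3,-1,-1,-1\}$, and the only $4$-regular graph of order $5$ is $K_5$, with spectrum $\{4,-1,-1,-1,-1\}$); for the remaining orders I would enumerate all connected $d$-regular graphs of that order using a standard tool such as \texttt{nauty}/\texttt{geng} and verify computationally that none satisfies the nut condition.

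For sufficiency, I would supply explicit nut graphs covering every remaining order. The finite exceptional entries (a cubic nut graph of order $12$, and $4$-regular nut graphs of orders $8, 10, 12$) should be exhibited directly and checked by a short spectral computation. For the infinite tails I would design an \emph{expansion} operation taking a $d$-regular nut graph $G$ of order $n$ to a $d$-regular nut graph of order $n+k$, with $k = 2$ in the cubic case and with $k \in \{1,2\}$ in the $4$-regular case to reach both parities. Such an operation removes one edge (or a small matching) of $G$ and inserts a fixed gadget; the point is to ensure that (i) the new graph is $d$-regular, (ii) it is connected, and (iii) it still satisfies the nut property. Combined with a handful of base cases covering the residues left uncovered, iterating the expansion supplies nut graphs of every claimed order.

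The main obstacle is step (iii). Regularity and connectedness survive local surgery automatically, but the nut condition is a global spectral statement and is genuinely fragile: a seemingly innocuous modification can either create an extra zero eigenvalue (breaking simplicity of the kernel) or force a coordinate of the null vector to vanish. Proving that the expansion is valid amounts to a block-matrix kernel calculation: one writes $A' x = 0$ for the enlarged adjacency matrix $A'$ in terms of the restrictions of $x$ to $V(G)$ and to the inserted gadget, shows that the only solutions extend the original null eigenvector of $G$, and verifies that no coordinate is zero. Designing a gadget whose algebraic action on the kernel is transparent, and which can be grafted onto an arbitrary edge of a general nut graph $G$, is the delicate technical core of the argument.
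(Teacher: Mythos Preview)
This theorem is not proved in the present paper at all: it is quoted as background from \cite{GaPiSc2023} (Gauci, Pisanski and Sciriha), so there is no ``paper's own proof'' to compare your attempt against. Your outline is nonetheless close in spirit to what that source actually does. The necessity side there is handled exactly as you suggest, by parity and an exhaustive computer search over small orders. On the sufficiency side, the key device in \cite{GaPiSc2023} is the so-called \emph{Fowler construction}: a local surgery that enlarges a $d$-regular nut graph by a fixed even increment while provably preserving the nut property. This is precisely the kind of expansion gadget you are proposing to design, and the delicate kernel calculation you flag as the main obstacle is exactly the content of their proof that the construction works.

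One point to sharpen: for $d=4$ you need to reach odd orders as well, and a purely even-step expansion cannot do that on its own. In \cite{GaPiSc2023} this is handled not by a $k=1$ gadget but by having enough base examples of both parities so that the even-step Fowler construction covers everything from some point on; your plan to allow $k\in\{1,2\}$ would work too, but a genuine $+1$ surgery that preserves $4$-regularity and the nut property is harder to design than you may expect, and the original paper avoids it.
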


\noindent
This result was subsequently extended by Fowler, Gauci, Goedgebeur, Pisanski and Sciriha as follows.

\begin{theorem}[\hspace{1sp}{\cite[Theorem~7]{FoGaGoPiSc2020}}]
The following statements hold:
\begin{enumerate}[label=\textbf{(\roman*)}]
    \item $\mathfrak{N}^{\mathrm{reg}}_5 = \{ n \in \mathbb{N} : \mbox{$n$ is even and } n \ge 10 \}$;
    \item $\mathfrak{N}^{\mathrm{reg}}_6 = \{ n \in \mathbb{N} : n \ge 12 \}$;
    \item $\mathfrak{N}^{\mathrm{reg}}_7 = \{ n \in \mathbb{N} : \mbox{$n$ is even and } n \ge 12 \}$;
    \item $\mathfrak{N}^{\mathrm{reg}}_8 = \{ 12 \} \cup \{ n \in \mathbb{N} : n \ge 14 \}$;
    \item $\mathfrak{N}^{\mathrm{reg}}_9 = \{ n \in \mathbb{N} : \mbox{$n$ is even and } n \ge 16 \}$;
    \item $\mathfrak{N}^{\mathrm{reg}}_{10} = \{ n \in \mathbb{N} : n \ge 15 \}$;
    \item $\mathfrak{N}^{\mathrm{reg}}_{11} = \{ n \in \mathbb{N} : \mbox{$n$ is even and } n \ge 16 \}$.
\end{enumerate}
\end{theorem}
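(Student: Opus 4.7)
The plan is to split the argument for each $d \in \{5, 6, \ldots, 11\}$ into a necessity part, which rules out certain orders $n$ from $\mathfrak{N}^{\mathrm{reg}}_d$, and a sufficiency part, which constructs, for every admissible $n$, a concrete $d$-regular nut graph of that order. For odd $d$ the parity requirement $2 \mid n$ is immediate from the handshake lemma, and the bound $n \ge d+1$ is trivial for any $d$-regular graph. All remaining excluded small orders (for instance, the absence of a $5$-regular nut graph of order $8$, or of an $8$-regular nut graph of order $13$) I would certify by a finite exhaustive enumeration of connected $d$-regular graphs of the relevant sizes, say via \texttt{nauty}, and by checking the rank and kernel of the adjacency matrix of each.

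For sufficiency, the natural strategy is to identify a small number of explicit base-case nut graphs for each $d$, and then iterate a size-increasing construction to reach all larger admissible orders. A convenient way to obtain the base cases is to search within well-structured families such as circulant graphs $\Cay(\mathbb{Z}_n, C)$, or more generally $\Cay(\Dih_{n/2}, C)$, where the adjacency spectrum reduces to evaluating a connection polynomial at $n$-th roots of unity; this converts the nut property into an explicit number-theoretic condition that can be checked by hand, or by short computation, for each small $n$. Once a seed is in place, one then applies a gluing operation that produces, from a $d$-regular nut graph of order $n$, a $d$-regular nut graph of order $n + 2$ (for even $d$) or of order $n + 4$ (for odd $d$, so that parity is preserved). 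A classical such operation is to delete a small subgraph and splice in a longer, structurally similar piece, or to form a product with a carefully chosen tiny gadget, in such a way that the kernel vector of the new adjacency matrix can be written down directly in terms of the old one.

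The main obstacle I anticipate is verifying that this extension step genuinely preserves the nut property, rather than merely the degree. Altering a handful of rows of the adjacency matrix can easily introduce spurious additional zero eigenvalues, or force zero entries in the kernel vector, so the construction must be tight enough that the new eigenvector can be written explicitly in terms of the old one (e.g.\ by appending a constant block, or by a prescribed linear combination of the existing entries). One must then verify that the gluing vertices can always be chosen so that no entry is annihilated. The fact that the theorem treats $d = 5, 6, \ldots, 11$ as seven separate cases, with slightly different lower bounds and small sporadic exceptions (e.g.\ $\mathfrak{N}^{\mathrm{reg}}_8$ contains $12$ but omits $13$), strongly suggests that the overall proof is a case-by-case combination of explicit seeds and dedicated extensions, with the few remaining small orders dispatched by direct computer verification.
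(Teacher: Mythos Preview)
The paper does not prove this theorem at all: it is quoted in the introduction as a known result, with attribution to \cite[Theorem~7]{FoGaGoPiSc2020}, and no argument is given here. So there is no proof in the present paper to compare your proposal against.

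That said, your overall strategy---exhaustive search to exclude small orders, explicit seed graphs, and an order-increasing extension---is indeed the shape of the argument in the cited source. The main discrepancy is in the extension step. You propose a gluing that increases the order by $2$ (for even $d$) or $4$ (for odd $d$), but no such general construction is known; the construction actually used in \cite{FoGaGoPiSc2020} (the so-called Fowler construction, cf.\ also \cite{GaPiSc2023}) takes a $d$-regular nut graph on $n$ vertices and produces one on $n + 2d$ vertices. This means that for each $d$ one needs not one or two seeds but a full residue system of $2d$ consecutive base cases (or $d$ even base cases when $d$ is odd), all of which are supplied by computer search. Your remark that ``altering a handful of rows \ldots\ can easily introduce spurious additional zero eigenvalues'' is exactly the obstruction to an $n \to n+2$ step; the $n \to n+2d$ construction works because the new kernel vector can be written down explicitly and uniformly. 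If you intend to give an independent proof, you should either exhibit a concrete $n \to n+2$ gadget and verify the nut property is preserved (which would be new), or fall back to the $n \to n+2d$ construction and accept the larger number of base cases.
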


\noindent
Later on, the set $\mathfrak{N}_{12}^{\mathrm{reg}}$ was also determined by Bašić, Knor and Škrekovski.

\begin{theorem}[\hspace{1sp}{\cite[Theorem~1.3]{BaKnSk2022}}]
$\mathfrak{N}^{\mathrm{reg}}_{12} = \{ n \in \mathbb{N} : n \ge 16 \}$.
\end{theorem}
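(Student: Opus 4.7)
The statement is a sharp order--degree classification, so my plan is to handle the two directions separately: ruling out $n \le 15$ and constructing an example for every $n \ge 16$. These require very different techniques.

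For the nonexistence direction I would work with the complement $\overline{G}$, which is $(n - 13)$-regular. Via $A(\overline{G}) = J - I - A(G)$, any eigenvector $v \perp \mathbf{1}$ of $\overline{G}$ with eigenvalue $\lambda$ is an eigenvector of $G$ with eigenvalue $-1 - \lambda$, so that the multiplicity of $0$ in the spectrum of $G$ (necessarily coming from $\mathbf{1}^{\perp}$, since $G$ is $12$-regular with $12 \ne 0$) equals the multiplicity of $-1$ in the spectrum of $\overline{G}$ restricted to $\mathbf{1}^{\perp}$. For $n = 13$, $\overline{G}$ is edgeless and $-1$ never arises; for $n = 14$, $\overline{G}$ is a perfect matching with nontrivial spectrum $\{1^{(6)}, (-1)^{(7)}\}$, which forces $0$ to appear in $G$ with multiplicity $7$; for $n = 15$, $\overline{G}$ is a disjoint union of cycles of total length $15$, and since the eigenvalue $-1$ appears in $C_m$ precisely when $3 \mid m$ and then with multiplicity two (from the indices $m/3$ and $2m/3$, both orthogonal to $\mathbf{1}$), the multiplicity of $-1$ on $\mathbf{1}^\perp$ is always an even integer, never $1$.

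For the construction I would first attempt circulant graphs $\Cay(\mathbb{Z}_n, C)$ with $|C| = 12$. The eigenvalues of such a graph are $\lambda_j = \sum_{c \in C} \omega_n^{jc}$ with $\omega_n = e^{2\pi i / n}$, and come in complex conjugate pairs $(\lambda_j, \lambda_{n-j})$. A simple zero eigenvalue can therefore occur only at the self-conjugate index $j = n/2$, which forces $n$ to be even, with the nowhere-zero alternating vector $(1, -1, 1, -1, \ldots)$ serving as the kernel eigenvector. Hence for even $n \ge 16$ it suffices to produce a negation-closed $C \subseteq \mathbb{Z}_n \setminus \{0\}$ of size $12$ whose polynomial $f_C(x) = \sum_{c \in C} x^c$ vanishes at $x = -1$ but at no other nontrivial $n$-th root of unity; this is a standard task amenable to cyclotomic polynomial analysis and is typically handled by assembling parametric families indexed by residues of $n$ modulo a small integer. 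For odd $n \ge 17$ circulants are ruled out by the conjugation argument above, so one must turn either to Cayley graphs over non-cyclic groups (for instance $\Dih(m)$) or to direct surgery on a nut graph of nearby even order, splicing in a carefully chosen local gadget that adjusts the parity of $n$ while preserving the spectral conditions.

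The main obstacle will be uniformly covering all odd orders $n \ge 17$. The clean cyclotomic machinery of the circulant case is unavailable, and one must instead identify a non-cyclic Cayley family with a provably simple zero eigenvalue — typically through a character-theoretic or block-diagonal decomposition of the adjacency matrix according to the irreducible representations of the underlying group — or carry out an explicit non-transitive construction and verify by hand that $0$ remains simple with a nowhere-zero kernel vector. A handful of small odd orders near the threshold (such as $17, 19, 21$) will likely have to be treated as individual base cases by computer search, after which a general construction extends to all sufficiently large odd $n$.
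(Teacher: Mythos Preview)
The paper does not prove this theorem at all: it is quoted verbatim from \cite{BaKnSk2022} as background in the introduction, alongside the analogous results for $d \le 11$. There is therefore no ``paper's own proof'' to compare against; everything the present paper actually proves concerns vertex-transitive and Cayley nut graphs, not general regular ones.

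That said, your proposal has a real gap in the odd-$n$ direction. Your nonexistence argument for $n \le 15$ via the complement is clean and correct, and for even $n \ge 16$ the circulant route is viable (indeed, Theorem~\ref{base_circ_th} in this very paper gives $\mathfrak{N}^{\mathrm{circ}}_{12} = \{\,n : n \text{ even},\ n \ge 16\,\}$, since $12 \equiv 4 \pmod 8$). But for odd $n \ge 17$ your fallback to ``Cayley graphs over non-cyclic groups (for instance $\Dih(m)$)'' cannot work: $\Cay(\Dih(m),C)$ always has even order $2m$, and more decisively, Theorem~\ref{base_vt_th} shows that \emph{every} vertex-transitive nut graph of degree $d \equiv 0 \pmod 4$ has even order. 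So no Cayley construction whatsoever --- cyclic, dihedral, or otherwise --- can produce a $12$-regular nut graph of odd order. The odd case genuinely requires a non-vertex-transitive construction, and your ``surgery'' alternative is left entirely unspecified. This is precisely where the work in \cite{BaKnSk2022} lies, and your sketch does not indicate how to carry it out.
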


Fowler, Gauci, Goedgebeur, Pisanski and Sciriha initiated the vertex-transitive nut graph order--degree existence problem by posing the next question.

\begin{problem}[\hspace{1sp}{\cite[Question~9]{FoGaGoPiSc2020}}]\label{vt_problem}
    For what pairs $(n, d)$ does a vertex-transitive nut graph of order $n$ and degree~$d$ exist?
\end{problem}

\noindent
In the same paper, the following necessary condition for Problem \ref{vt_problem} was proved.

\begin{theorem}[\hspace{1sp}{\cite[Theorem~10]{FoGaGoPiSc2020}}]\label{base_vt_th}
Let $G$ be a vertex-transitive nut graph on $n$ vertices, of degree $d$. Then $n$ and $d$ satisfy the following conditions. Either $d \equiv 0 \pmod 4$, and $n \equiv 0 \pmod 2$ and $n \ge d + 4$; or $d \equiv 2 \pmod 4$, and $n \equiv 0 \pmod 4$ and $n \ge d + 6$.    
\end{theorem}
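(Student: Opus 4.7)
The plan is to exploit vertex-transitivity to force the nut eigenvector to be essentially a $\pm 1$-vector, then extract the parity conditions on $n$ and $d$ by double counting, and finally rule out the smallest sporadic order through a direct structural argument. First, I would observe that the one-dimensional nullspace of the adjacency matrix $A$ is preserved by every $P_\sigma$: for $\sigma \in \mathrm{Aut}(G)$ with associated permutation matrix $P_\sigma$, one has $P_\sigma A = A P_\sigma$, so $P_\sigma \mathbf{x} = \lambda_\sigma \mathbf{x}$ for some scalar, and the orthogonality of $P_\sigma$ forces $\lambda_\sigma \in \{+1, -1\}$. Consequently $|x_u| = |x_v|$ whenever $u$ and $v$ lie in the same $\mathrm{Aut}(G)$-orbit, and vertex-transitivity then yields $|x_v| = c$ for a common $c > 0$ and every $v \in V(G)$. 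After rescaling I may assume that $x_v \in \{+1, -1\}$ throughout.

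Next, evaluating $A\mathbf{x} = 0$ at a vertex $v$ gives $\sum_{u \sim v} x_u = 0$, so the $d$ neighbors of $v$ split evenly into $d/2$ with label $+1$ and $d/2$ with label $-1$; in particular $d$ must be even. Setting $P = \{v : x_v = +1\}$ and $N = \{v : x_v = -1\}$ and double-counting the $P$--$N$ edges from each side yields $|P|\cdot d/2 = |N|\cdot d/2$, so $|P| = |N| = n/2$ and $n$ is even. The number of edges inside $P$ equals $\tfrac{nd}{8}$, which must be a non-negative integer; in the case $d \equiv 2 \pmod 4$, writing $d = 2(2k+1)$ shows that $\tfrac{n(2k+1)}{4} \in \mathbb{Z}$ forces $4 \mid n$.

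For the lower bounds I would rule out $n = d+2$ directly. In this case the complement of $G$ is $1$-regular on an even number of vertices, hence a perfect matching $M$, so $G$ must be the cocktail-party graph $K_{(n/2) \times 2} = K_n - M$. Writing its adjacency matrix as $J - I - A_M$, where $J$ is the all-ones matrix and $A_M$ is the adjacency matrix of $M$ (so that $J$, $I$, and $A_M$ pairwise commute), and diagonalizing in the $A_M$-eigenbasis $\{e_i \pm e_{i'}\}$, one obtains the spectrum $\{d,\, 0^{(n/2)},\, (-2)^{(n/2-1)}\}$, so the nullity equals $n/2 \ge 2$, contradicting the nut property. Thus $n \ne d+2$, and combined with the parity conditions already established this gives $n \ge d+4$ when $d \equiv 0 \pmod 4$ (since $d+3$ is odd) and $n \ge d+6$ when $d \equiv 2 \pmod 4$ (since $d+2$ is the only multiple of $4$ in $\{d+1,\dots,d+5\}$, and it has just been excluded). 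The main obstacle is really the cocktail-party spectrum computation, but once the complement of $G$ is recognized as a perfect matching the calculation is routine; everything else is parity bookkeeping built on top of the initial $\pm 1$-eigenvector observation.
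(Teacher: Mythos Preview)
The paper does not contain its own proof of this statement: Theorem~\ref{base_vt_th} is quoted verbatim from \cite[Theorem~10]{FoGaGoPiSc2020} and used as a known necessary condition, with the paper's contribution lying in the converse direction (Theorem~\ref{main_th}). There is therefore nothing in the present paper to compare your argument against.

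That said, your proposal is a correct and standard proof of the cited result. The key steps---that the one-dimensional kernel is $\mathrm{Aut}(G)$-invariant, forcing a $\pm 1$ kernel vector under vertex-transitivity; that the local equation $\sum_{u\sim v}x_u=0$ gives $2\mid d$ and the bipartition into $P$ and $N$ of equal size; that the internal edge count $\tfrac{nd}{8}$ forces $4\mid n$ when $d\equiv 2\pmod 4$; and that $n=d+2$ forces $G$ to be the cocktail-party graph with nullity $n/2\ge 2$---are all sound. One very small gap: you pass silently over $n=d+1$, but this is harmless since $K_{d+1}$ has spectrum $\{d,(-1)^d\}$ and hence nullity zero, so it is not a nut graph; mentioning this in a clause would make the lower-bound argument airtight.
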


\noindent
The circulant and Cayley nut graphs, which both form a subclass of the vertex-transitive nut graphs, were then investigated through a series of papers \cite{DaSt2022, Damnjanovic2023_FIL, Damnjanovic2024_AMC, Damnjanovic2024_DMC, Damnjanovic2025_ADAM}, leading to the following two results.

\begin{theorem}[\hspace{1sp}{\cite[Theorem 1.8]{Damnjanovic2024_AMC}}]\label{base_circ_th}
For each $d \in \mathbb{N}_0$, the set $\mathfrak{N}^{\mathrm{circ}}_d$ is given by
\[
    \mathfrak{N}^{\mathrm{circ}}_d = \begin{cases}
        \varnothing,& \mbox{if $d = 0$ or $4 \nmid d$},\\
        \{ n \in \mathbb{N} : \mbox{$n$ is even and } n \ge d + 4 \},& \mbox{if $d \equiv 4 \pmod 8$},\\
        \{14\} \cup \{ n \in \mathbb{N} : \mbox{$n$ is even and } n \ge 18 \},& \mbox{if $d = 8$},\\
        \{ n \in \mathbb{N} : \mbox{$n$ is even and } n \ge d + 6 \},& \mbox{if $8 \mid d$ and $d \ge 16$}.
    \end{cases}
\]
\end{theorem}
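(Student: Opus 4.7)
The approach is spectral. For $G = \Cay(\mathbb{Z}_n, S)$ with $S$ a symmetric subset of $\mathbb{Z}_n \setminus \{0\}$ of size $d$, the eigenvalues are $\lambda_j = \sum_{s \in S} \omega_n^{sj}$ (with $\omega_n = e^{2 \pi i / n}$), and the corresponding characters, whose entries are $n$-th roots of unity, never vanish. Since $\lambda_j$ and $\lambda_{-j}$ are always complex conjugates, a simple real eigenvalue $0$ can only arise from an index satisfying $j = -j$ in $\mathbb{Z}_n$, i.e., $j \in \{0, n/2\}$; as $\lambda_0 = d > 0$, this forces $n$ to be even and $\lambda_{n/2} = \sum_{s \in S}(-1)^s = 0$, so the numbers of even and odd elements of $S$ must coincide. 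A short parity count that singles out $n/2$ as the only possible self-inverse element of $S$ then yields both $n/2 \notin S$ and $4 \mid d$, giving the $\mathfrak{N}^{\mathrm{circ}}_d = \varnothing$ clause when $4 \nmid d$.

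Beyond this, the central non-existence statement is $d + 4 \notin \mathfrak{N}^{\mathrm{circ}}_d$ for $d \geq 16$ with $8 \mid d$; the remaining exclusions at $d = 8$ reduce to exhaustive case-checking. These, along with the sufficiency claims, are most naturally expressed in polynomial language: writing $P_S(x) := \sum_{s \in S} x^s \in \mathbb{Z}[x]/(x^n - 1)$, the nut-graph condition becomes $\Phi_2(x) \mid P_S(x)$ but $\Phi_k(x) \nmid P_S(x)$ for every other $k \mid n$. When $n = d + 4$, the set $S$ omits only two elements of $\mathbb{Z}_n \setminus \{0, n/2\}$, so $P_S$ differs from a simple full-residue polynomial by a sparse correction; evaluating at primitive $k$-th roots of unity and exploiting the symmetry of the omitted pair should force an unwanted $\Phi_k$ to divide $P_S$ whenever $8 \mid d$, ruling out the boundary case.

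For sufficiency, I would design explicit connection sets $S$ whose $P_S(x)$ factors into easily analyzed building blocks --- typically geometric progressions $\sum_{i = 0}^{r - 1} x^{a + ib}$ together with symmetric pairs $x^a + x^{-a}$ --- chosen so that $\Phi_2$ is the only cyclotomic factor of $P_S$ among $\Phi_k$, $k \mid n$. One template should handle $d \equiv 4 \pmod 8$ with $n$ even and $n \geq d + 4$, a second (slightly denser) template should cover $8 \mid d$, $d \geq 16$ with $n \geq d + 6$, and a handful of ad-hoc constructions would dispatch the remaining $d = 8$ cases, including the sporadic order $n = 14$. The main obstacle is the cyclotomic non-divisibility verification: for each template, one must rule out $P_S(\omega_n^j) = 0$ for every $j \notin \{0, n/2\}$, uniformly over infinite families of admissible $n$. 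Tools such as bounding the degree of $P_S \bmod \Phi_k$, together with Kronecker-style arguments in $\mathbb{Z}[\omega_n]$, should handle individual cases, but calibrating the templates to remain valid right down to the sharp lower bounds $n \geq d + 4$ and $n \geq d + 6$ is where the bulk of the technical work concentrates.
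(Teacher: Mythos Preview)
This theorem is not proved in the present paper at all: it is quoted verbatim from \cite{Damnjanovic2024_AMC} as background (note the attribution \emph{[Theorem~1.8]} in the statement header). There is therefore no ``paper's own proof'' to compare against here; the current paper only \emph{uses} the result, and the full argument lives in the cited article.

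That said, your outline is sound and is in fact the standard line of attack for circulant nut graphs, matching the methodology visible throughout this paper and its predecessors. Your necessity argument is correct: since $S=-S$, every $\lambda_j$ is real and equals $\lambda_{-j}$, so a simple zero eigenvalue must come from a fixed point of $j\mapsto -j$; ruling out $j=0$ forces $n$ even and $\lambda_{n/2}=0$, and the pair-parity count (each pair $\{s,n-s\}$ has constant parity when $n$ is even, and $n/2\notin S$ since $d$ must be even) gives $4\mid d$. Your reformulation of the nut condition as ``$\Phi_2\mid P_S$ and $\Phi_k\nmid P_S$ for every other $k\mid n$'' is exactly the device used in \cite{Damnjanovic2024_AMC} and in the present paper's Section~3, where the heavy lifting is done by the Filaseta--Schinzel theorem (Theorem~\ref{filaschi_th} here) together with the radical reduction of Corollary~\ref{cyclotomic_reduction_rad} and Lemma~\ref{polynomial_division_lemma}.

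What you have written, however, is explicitly a \emph{plan}, not a proof: the two substantive pieces --- the exclusion $d+4\notin\mathfrak N^{\mathrm{circ}}_d$ for $8\mid d$, $d\ge 16$, and the infinite families of explicit connection sets with the required cyclotomic non-divisibility --- are only described, not executed. In the actual proof these are precisely the hard parts, requiring carefully chosen templates and a case analysis (bounded via Theorem~\ref{filaschi_th}) that is partly computer-assisted. So your strategy is the right one, but the proposal as it stands is a roadmap rather than a demonstration.
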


\begin{theorem}[\hspace{1sp}{\cite[Corollaries 8 and 9]{Damnjanovic2025_ADAM}}]\label{base_cay_th}
    For each $d \in \mathbb{N}$ such that $4 \mid d$, the sets $\mathfrak{N}^{\mathrm{VT}}_d$ and $\mathfrak{N}^{\mathrm{Cay}}_d$ are given by
    \[
        \mathfrak{N}^{\mathrm{VT}}_d = \mathfrak{N}^{\mathrm{Cay}}_d = \{ n \in \mathbb{N} : \mbox{$n$ is even and } n \ge d + 4 \} .
    \]
\end{theorem}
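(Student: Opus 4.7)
The inclusion $\mathfrak{N}_d^{\mathrm{VT}} \subseteq \{n \in \mathbb{N} : n \text{ even and } n \ge d+4\}$ is immediate from Theorem \ref{base_vt_th} specialised to $4 \mid d$, and the chain $\mathfrak{N}_d^{\mathrm{Cay}} \subseteq \mathfrak{N}_d^{\mathrm{VT}}$ transfers the same upper bound to Cayley graphs. So the entire content of Theorem \ref{base_cay_th} is the reverse inclusion: for every $d$ with $4 \mid d$ and every even $n \ge d+4$, one must exhibit a $d$-regular Cayley nut graph of order $n$. Once this is done, squeezing against the upper bound simultaneously pins down both $\mathfrak{N}_d^{\mathrm{VT}}$ and $\mathfrak{N}_d^{\mathrm{Cay}}$.

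The natural plan is to piggy-back on Theorem \ref{base_circ_th}, which already settles most pairs. When $d \equiv 4 \pmod 8$, the circulant set coincides with the target set and there is nothing more to do. When $d = 8$, circulants miss only $n \in \{12, 16\}$; when $8 \mid d$ with $d \ge 16$, circulants miss only the single order $n = d+4$. Thus the problem reduces to producing a $d$-regular Cayley nut graph on these ``boundary'' orders, for which Theorem \ref{base_circ_th} tells us that cyclic groups provably do not suffice.

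For the two sporadic pairs $(n, d) \in \{(12, 8),(16, 8)\}$ I would simply search among Cayley graphs over small non-cyclic groups such as $\mathbb{Z}_2 \times \mathbb{Z}_6$, $\mathbb{Z}_2 \times \mathbb{Z}_2 \times \mathbb{Z}_4$, $\Dih(6)$, and $\Dih(8)$, verifying the nut property directly from the spectrum of the adjacency matrix. The real work is the infinite family $n = d+4$ with $8 \mid d$ and $d \ge 16$. Here I would attempt a uniform construction over the dihedral group $\Gamma = \Dih(m)$ with $m = (d+4)/2$: taking as connection set the complement in $\Gamma \setminus \{e\}$ of a symmetric three-element ``missing'' set $T$, one obtains a $d$-regular Cayley graph whose complement is a cubic Cayley graph on $\Dih(m)$. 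Since all irreducible representations of $\Dih(m)$ have dimension $1$ or $2$, the spectrum splits into a few explicit $\pm 1$-eigenvalues and $(m-1)/2$ or $m/2 - 1$ two-by-two cyclotomic blocks, so both the nullity-one condition and the nowhere-zero eigenvector condition translate to arithmetic statements about cyclotomic polynomials evaluated at small integers.

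The main obstacle is precisely this boundary family. The graph is essentially the complement of a cubic Cayley graph, the connection set has very little flexibility, and one must arrange simultaneously that the nullity is exactly one and that the unique kernel vector has no zero entry. The first condition reduces to showing that no cyclotomic polynomial $\Phi_k(x)$ with $k \mid 2m$ except one prescribed index divides a fixed integer ``block determinant'' polynomial coming from $T$; the second reduces to a partial character sum being nonzero, which must be checked against the standard $1$-dimensional characters of $\Dih(m)$. If a single choice of $T$ does not work uniformly in $d$, the fallback would be to split by $d$ modulo a small integer (say $16$ or $24$) and to exhibit a different family $T = T(d)$ in each residue class, each analysed through the same character-theoretic machinery.
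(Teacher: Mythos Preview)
This theorem is not proved in the present paper; it is quoted from \cite{Damnjanovic2025_ADAM}. That said, Section~\ref{sc_conclusion} does spell out the constructions used there, so a comparison is possible.

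Your outline is correct and matches the cited approach: circulants via Theorem~\ref{base_circ_th} cover everything except $(12,8)$, $(16,8)$, and $n=d+4$ for $8\mid d$ with $d\ge 16$, exactly as you say. For the infinite family---and in fact also for $(12,8)$---the construction in \cite{Damnjanovic2025_ADAM} is precisely your ``complement of a cubic Cayley graph on $\Dih(m)$'' idea, with the single uniform choice $T=\{r,r^{-1},s\}$: one obtains $\overline{C_m\Osq K_2}$ with $m=(d+4)/2$. The analysis is much lighter than you anticipate. The prism spectrum is $\{2\cos(2\pi j/m)\pm 1:0\le j<m\}$, so by Lemma~\ref{reg_comp_lemma} the complement has nullity equal to the multiplicity of $-1$ in that list; for $8\mid d$ one has $m\equiv 2\pmod 4$, and then $-1$ arises exactly once (from $j=m/2$ with the $+1$ sign), while $\cos(2\pi j/m)=0$ has no integer solution. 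Lemma~\ref{vt_nut_lemma} then finishes. No cyclotomic divisibility arguments, no nowhere-zero eigenvector check beyond vertex-transitivity, and no residue-class splitting are needed---your fallback plan is unnecessary. The one remaining sporadic pair $(16,8)$ is handled by the explicit example $\Cay(\Dih(8),\{r^{\pm 1},r^{\pm 2},r^{\pm 3},s,r^2s\})$ recorded in Section~\ref{sc_conclusion}.
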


\noindent
The closely related polycirculant nut graphs were studied in \cite{BaDam2025_CAM, DaBaPiZi2024, DaBaPiZi2025}. For other recent results concerning the automorphisms of nut graphs, the reader is referred to \cite{BaDam2025_JACO, BaFo2024, BaFoPi2024}.

The following result on the degrees of regular and Cayley nut graphs was recently obtained.

\begin{theorem}[\hspace{1sp}{\cite{BaDamFo2025}}]\label{base_reg_cay_th}
    The set $\mathfrak{N}_d^{\mathrm{reg}}$ is infinite for any $d \ge 3$, while the set $\mathfrak{N}_d^{\mathrm{Cay}}$ is infinite for any even $d \ge 4$.
\end{theorem}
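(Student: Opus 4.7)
The first assertion, $\mathfrak{N}_d^{\mathrm{reg}}$ infinite for $d \ge 3$, reduces via $\mathfrak{N}_d^{\mathrm{Cay}} \subseteq \mathfrak{N}_d^{\mathrm{reg}}$ to the second in the case of even $d$, so it remains to handle odd $d$ separately, together with the second assertion. The cases $d \in \{3, 4, \ldots, 12\}$ are immediate from the theorems stated just above: each of those results exhibits a cofinite-in-parity subset of $\mathbb{N}$, which is visibly infinite. Thus the genuine content lies in $d \ge 13$, which I would split into even $d$ (treated as Cayley) and odd $d$ (treated as regular but not Cayley, since Theorem~\ref{base_vt_th} forces every Cayley nut graph to have even degree).

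For even $d \ge 14$ with $4 \mid d$, infinitude of $\mathfrak{N}_d^{\mathrm{Cay}}$ is immediate from Theorem~\ref{base_cay_th}. For $d \equiv 2 \pmod 4$ (and for the leftover $d \in \{6, 10\}$), I would leave the abelian world, since the character-theoretic analysis underlying Theorem~\ref{base_circ_th} forces $4 \mid d$ in the abelian setting. The natural candidate is a dihedral Cayley graph: for each such $d$, the goal is to exhibit a connection set $C \subseteq \Dih(m) \setminus \{e\}$ of size $d$ containing an odd number of reflections, and then analyse the spectrum of $\Cay(\Dih(m), C)$ via the $1$- and $2$-dimensional irreducible representations of $\Dih(m)$. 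The nut conditions translate into a cyclotomic-polynomial identity, which one solves along an infinite arithmetic progression of values of $m$ (avoiding the finitely many bad residues at which a $2$-dimensional block might contribute to the kernel).

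For odd $d \ge 13$, Cayley constructions are unavailable by the parity constraint just noted, so the approach must be genuinely combinatorial. My plan is a local-expansion strategy: starting from a base $d$-regular nut graph $G_0$ (built by induction on $d$ from the known small-degree cases using an edge-bundling or triangle-attachment step that raises the degree by $2$ or $3$), design a gadget $H_d$ of bounded size so that substituting $H_d$ in place of a suitably chosen edge or perfect matching of $G_0$ yields a new $d$-regular nut graph of strictly larger order. Iterating this substitution produces an infinite family of admissible orders for each fixed $d$. The main obstacle is precisely this calibration: one must verify that the nullity of the modified adjacency matrix stays at exactly~$1$ and that no coordinate of the kernel vector vanishes. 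This typically reduces to nonvanishing of a block-circulant or block-tridiagonal determinant along the whole family, and it is there that essentially all the technical work resides. By contrast, the dihedral Cayley case above is comparatively clean, because its spectral analysis packages neatly into cyclotomic polynomial arithmetic, for which strong divisibility results are available.
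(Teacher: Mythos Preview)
The paper does not prove this theorem at all: it is quoted verbatim from \cite{BaDamFo2025} and used only as background. There is therefore no ``paper's own proof'' to compare your attempt against, and your proposal should be read as an independent argument rather than a reconstruction.

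On the Cayley side your instincts are good and in fact anticipate the machinery this paper develops for its \emph{main} theorem: dihedral Cayley graphs analysed block-by-block via the $2 \times 2$ matrices $A_\zeta$ of Lemma~\ref{dih_cay_lemma}, with the nut condition reduced to a cyclotomic non-divisibility statement. So for even $d$ your outline is a genuine plan that can be (and in Section~\ref{sc_main} is) carried to completion, yielding a result much stronger than mere infinitude of $\mathfrak{N}_d^{\mathrm{Cay}}$.

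The odd-$d$ part of your proposal, however, is not a proof but a wish list. You posit a base graph $G_0$ ``built by induction on $d$ \ldots\ using an edge-bundling or triangle-attachment step that raises the degree by $2$ or $3$'' without specifying any such step or arguing that it preserves the nut property; you then posit a gadget $H_d$ whose substitution increases the order while keeping $\eta = 1$ and the kernel vector full, again with no construction and no verification. You yourself concede that ``essentially all the technical work resides'' in exactly the part you have left blank. Neither the existence of $G_0$ for arbitrary odd $d \ge 13$ nor the existence of a nullity-preserving, full-kernel-preserving expansion gadget is established anywhere in your text, and neither follows from anything cited in this paper. As written, the odd-degree half is a gap, not an argument.
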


\noindent
Here, we completely solve Problem \ref{vt_problem} through a constructive approach by using Cayley nut graphs, thereby extending Theorems \ref{base_cay_th} and \ref{base_reg_cay_th}. Additionally, this gives an inverse result to Theorem \ref{base_vt_th} in the sense of showing that the necessary condition is sharp. Our main result is embodied in the following theorem.

\begin{theorem}\label{main_th}
    For each $d \in \mathbb{N}_0$, the sets $\mathfrak{N}_d^{\mathrm{VT}}$ and $\mathfrak{N}_d^{\mathrm{Cay}}$ are given by
    \[
        \mathfrak{N}_d^{\mathrm{VT}} = \mathfrak{N}_d^{\mathrm{Cay}} = \begin{cases}
            \varnothing,& \mbox{if $d$ is odd or $d < 4$},\\
            \{ n \in \mathbb{N} : \mbox{$n$ is even and } n \ge d + 4 \},& \mbox{if $4 \mid d$ and $d \ge 4$},\\
            \{ n \in \mathbb{N} : \mbox{$4 \mid n$ and } n \ge d + 6 \},& \mbox{if $d \equiv 2 \pmod 4$ and $d \ge 6$}.
        \end{cases}
    \]
\end{theorem}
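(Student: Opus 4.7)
The plan is to isolate the one remaining case and construct explicit Cayley nut graphs on dihedral groups. Theorem~\ref{base_vt_th} immediately gives the inclusion $\mathfrak{N}_d^{\mathrm{VT}}\subseteq\mathfrak{N}_d^{\mathrm{reg}}$ into the claimed right-hand side for every $d$, and in particular forces $\mathfrak{N}_d^{\mathrm{VT}}=\varnothing$ when $d$ is odd or $d<4$. The reverse inclusion in the case $4\mid d$, $d\ge 4$ is already Theorem~\ref{base_cay_th}. Consequently, the entire theorem reduces to the following claim: for every $d\equiv 2\pmod 4$ with $d\ge 6$ and every $n$ with $4\mid n$ and $n\ge d+6$, there exists a $d$-regular Cayley nut graph on $n$ vertices.

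For the construction I would work inside the dihedral group $\Dih_{n/2}=\langle r,s\mid r^{n/2}=s^2=1,\ srs=r^{-1}\rangle$ of order $n$, and choose a connection set $C=R\sqcup S$ with $R$ an inversion-closed subset of $\langle r\rangle\setminus\{e\}$ and $S$ a subset of the reflection coset $s\langle r\rangle$ (every reflection is self-inverse, so $S$ is automatically inversion-closed). Since $4\mid n$ forces $n/2$ to be even, the central involution $r^{n/4}$ is available to optionally adjoin to $R$; combined with the fact that $|R|$ is otherwise even while $|S|$ can be varied by single units, this allows $|R|+|S|$ to be tuned to any admissible $d\equiv 2\pmod 4$. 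Note that Theorem~\ref{base_circ_th} rules out a circulant realisation for $d\equiv 2\pmod 4$, so the step into the nonabelian setting is indispensable.

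The spectral analysis exploits the block-diagonalisation of the adjacency matrix given by the irreducible representations of $\Dih_{n/2}$: the four one-dimensional characters $(r\mapsto\pm 1,\ s\mapsto\pm 1)$ produce four explicit character-sum eigenvalues, while each of the $n/4-1$ faithful two-dimensional representations contributes a $2\times 2$ block whose entries depend on $2\cos(2\pi kj/(n/2))$ for $r^j\in R$ and on a reflection-sum modulus. The goal is to design $R$ and $S$ so that exactly one of the four one-dimensional characters yields eigenvalue zero, while every two-dimensional block has nonzero determinant. The latter condition, expressed through the minimal polynomials of $2\cos(2\pi/e)$ (the real cyclotomic machinery developed in \cite{Damnjanovic2024_AMC, Damnjanovic2025_ADAM}), reduces to the nonvanishing of certain polynomial evaluations at primitive roots of unity, which follows from the irreducibility of cyclotomic polynomials. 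Once nullity one is secured, the kernel eigenvector lives inside a one-dimensional representation and hence has constant $\pm 1$ entries, automatically verifying the nut condition.

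I expect the main obstacle to be making the construction uniform in both parameters: one must simultaneously (i) produce exactly one vanishing character sum, (ii) ensure all $n/4-1$ cyclotomic determinants are nonzero, and (iii) cover every admissible pair $(n,d)$ without gaps. My strategy would be to fix a short ``base'' connection set depending on the residue of $n$ modulo some small period, and then extend it by adjoining symmetric rotation pairs $\{r^{\pm j}\}$ whose contribution to each two-dimensional block is controlled through a geometric-series identity in $\zeta_{n/2}$; the resulting determinant expressions factor through cyclotomic polynomials, and their irreducibility delivers nonvanishing uniformly in $n$. Finitely many boundary cases $n$ near the threshold $d+6$ would likely require separate ad hoc verification, as is standard for realisability problems of this type.
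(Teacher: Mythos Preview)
Your high-level strategy matches the paper's: reduce via Theorems~\ref{base_vt_th} and~\ref{base_cay_th} to the case $d\equiv 2\pmod 4$, then construct Cayley nut graphs on $\Dih(n/2)$ by choosing a rotation part and a reflection part of the connection set, block-diagonalise the adjacency matrix over the $(n/2)$-th roots of unity (the paper's Lemma~\ref{dih_cay_lemma} is exactly your representation-theoretic decomposition), arrange for the single zero eigenvalue to come from $\zeta=-1$, and show the remaining $2\times 2$ blocks are invertible. The paper indeed finishes the near-threshold orders $n\in\{d+6,d+10,d+14\}$ by separate complement constructions (Propositions~\ref{construction_3_prop}--\ref{construction_5_prop}) and a handful of sporadic cases by direct search, as you anticipated.

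Where your proposal has a genuine gap is the sentence ``which follows from the irreducibility of cyclotomic polynomials.'' Irreducibility only tells you that $\Phi_b(x)\mid P(x)$ is equivalent to $P$ vanishing at a primitive $b$-th root; it does \emph{not} by itself rule out $\Phi_b\mid P$ for the infinitely many relevant $b$. In the paper this is the entire content of Section~\ref{sc_aux}: after the geometric-series simplification you describe, the determinant condition becomes the nonvanishing of four explicit lacunary polynomial families $Q_t,R_t,S_t,T_t$ at all non-real roots of unity, and proving $\Phi_b\nmid R_t$ (Lemma~\ref{rt_lemma}) and $\Phi_b\nmid T_t$ (Lemma~\ref{tt_lemma}) uniformly in $t$ and $b$ requires the Filaseta--Schinzel theorem (Theorem~\ref{filaschi_th}) to bound the prime factors of $b$, a radical bound $b/\rad(b)$ via Lemma~\ref{polynomial_division_lemma}, a reduction from $4\mid b$ to the auxiliary families $Q_t,S_t$, and finally a computer-assisted finite check. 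None of this is automatic, and your plan would stall at precisely this point without that machinery.
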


\noindent
As it turns out, the necessary condition from Theorem \ref{base_vt_th} for the existence of a $d$-regular vertex-transitive nut graph of order $n$ is also sufficient, apart from the trivial case when $d = 0$ or $d = 2$.

In the rest of the paper, our main focus is to prove Theorem \ref{main_th}. In Section \ref{sc_prel}, we overview the theory necessary to carry out the proof. Afterwards, in Section \ref{sc_aux}, we obtain several results on the divisibility of four auxiliary families of polynomials by the cyclotomic polynomials. Finally, in Section \ref{sc_main}, we rely on constructions of Cayley nut graphs based on dihedral groups to complete the proof of Theorem~\ref{main_th} and end the paper with a brief conclusion in Section \ref{sc_conclusion}. The proof of several results from Section \ref{sc_aux} is completed through a computer-assisted approach by using the \texttt{Python} and \texttt{SageMath} \cite{SageMath} scripts that can be found in \cite{GitHub}.

\section{Preliminaries}\label{sc_prel}

For any graph $G$, let $A(G)$ denote the adjacency matrix of $G$ and let $\sigma(G)$ be the spectrum of $A(G)$, regarded as a multiset. Also, let $\overline{G}$ denote the complement of a graph $G$. We will need the following well-known result; for the proof, see the standard literature on spectral graph theory \cite{BrouHae2012, Chung1997, CvetDoobSachs1995, CvetRowSi1997, CvetRowSi2010}.

\begin{lemma}\label{reg_comp_lemma}
    Let $G$ be a regular graph of order $n$ with $\sigma(G) = \{ \lambda_1, \lambda_2, \ldots, \lambda_n \}$, where $\lambda_1 \ge \lambda_2 \ge \cdots \ge \lambda_n$. Then
    \[
        \sigma\left( \overline{G} \right) = \{ n - 1 - \lambda_1, -1 - \lambda_n, -1 - \lambda_{n - 1}, \ldots, -1 - \lambda_2 \} .
    \]
\end{lemma}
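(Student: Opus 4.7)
The plan is to exploit the matrix identity $A(\overline{G}) = J - I - A(G)$, where $J$ denotes the $n \times n$ all-ones matrix, and evaluate both sides on a judiciously chosen orthonormal eigenbasis of $A(G)$.

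First, I would use that $G$ is $k$-regular with $k = \lambda_1$ to conclude that the all-ones vector $\mathbf{1}$ is an eigenvector of $A(G)$ with eigenvalue $\lambda_1$. Setting $v_1 = \mathbf{1}/\sqrt{n}$ and extending to an orthonormal eigenbasis $v_1, v_2, \ldots, v_n$ of $A(G)$ with $A(G) v_i = \lambda_i v_i$, the symmetry of $A(G)$ guarantees that one may take $v_i \perp \mathbf{1}$ for all $i \ge 2$.

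Next, I would compute $A(\overline{G}) v_i$ for each $i$. For $i = 1$, the identity $J \mathbf{1} = n \mathbf{1}$ yields $A(\overline{G}) v_1 = (n - 1 - \lambda_1)\, v_1$. For $i \ge 2$, the orthogonality $\langle v_i, \mathbf{1} \rangle = 0$ gives $J v_i = 0$, so $A(\overline{G}) v_i = -(1 + \lambda_i)\, v_i$. Collecting the $n$ eigenvalues so obtained yields the claimed multiset.

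There is no genuine obstacle here; the lemma is entirely standard. The only point worth highlighting is that regularity is essential, since it is precisely what forces $\mathbf{1}$ to be an eigenvector of $A(G)$ and therefore permits $A(G)$, $J$, and $A(\overline{G})$ to share a common eigenbasis. The ordering displayed in the conclusion merely encodes that the map $\lambda \mapsto -1 - \lambda$ reverses the ordering of $\lambda_2, \ldots, \lambda_n$.
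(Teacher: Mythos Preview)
Your argument is correct and is precisely the standard proof of this classical fact. The paper itself does not supply a proof of this lemma; it simply cites the standard spectral graph theory references \cite{BrouHae2012, Chung1997, CvetDoobSachs1995, CvetRowSi1997, CvetRowSi2010}, where exactly the argument you sketch (based on $A(\overline{G}) = J - I - A(G)$ and a common eigenbasis containing $\mathbf{1}$) appears.
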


\noindent
Given a graph $G$, let $\eta(G)$ denote the multiplicity of zero as an eigenvalue of $A(G)$. The following property of vertex-transitive graphs is well known and follows directly from \cite[p.\ 135]{CvetDoobSachs1995}.

\begin{lemma}\label{vt_nut_lemma}
    Let $G$ be a vertex-transitive graph. Then $\eta(G) = 1$ if and only if $G$ is a nut graph.
\end{lemma}

For each $n \ge 3$, we use $\Dih(n)$ to denote the dihedral group of order $2n$, i.e.,
\[
    \Dih(n) = \langle r, s \mid r^n = s^2 = e, s r s = r^{-1} \rangle .
\]
Here, $e$, $r$ and $s$ signify the identity, the rotation by $\frac{2 \pi}{n}$ and a reflection symmetry, respectively. Besides, for any $n \in \mathbb{N}_0$, we denote the identity matrix of order $n$ by $I_n$, and for any $m, n \in \mathbb{N}_0$, we denote the zero matrix with $m$ rows and $n$ columns by $O_{m, n}$. When the matrix size is clear from context, we may drop the subscripts and write $I$ or $O$ for short.
We resume with the next lemma.

\begin{lemma}\label{bicirc_lemma}
    For some $n \in \mathbb{N}$ and each $j = 0, 1, 2, 3$, let $A^{(j)}$ be the circulant matrix
    \[
        A^{(j)} = \begin{bmatrix}
            a_0^{(j)} & a_1^{(j)} & a_2^{(j)} & \cdots & a_{n-1}^{(j)}\\
            a_{n-1}^{(j)} & a_0^{(j)} & a_1^{(j)} & \cdots & a_{n-2}^{(j)}\\
            a_{n-2}^{(j)} & a_{n-1}^{(j)} & a_0^{(j)} & \cdots & a_{n-3}^{(j)}\\
            \vdots & \vdots & \vdots & \ddots & \vdots\\
            a_1^{(j)} & a_2^{(j)} & a_3^{(j)} & \dots & a_0^{(j)}
        \end{bmatrix} .
    \]
    Then the matrix given in the block form
    \begin{equation}\label{paux_1}
        \begin{bmatrix}
            A^{(0)} & A^{(1)}\\
            A^{(2)} & A^{(3)}
        \end{bmatrix}
    \end{equation}
    is similar to the direct sum
    \[
        \bigoplus_{\zeta} \begin{bmatrix}
            P_0(\zeta) & P_1(\zeta)\\
            P_2(\zeta) & P_3(\zeta)
        \end{bmatrix},
    \]
    where
    \[
        P_j(x) = a_0^{(j)} + a_1^{(j)} x + a_2^{(j)} x^2 + \cdots + a_{n-1}^{(j)} x^{n-1} \quad (j = 0, 1, 2, 3),
    \]
    and $\zeta$ ranges over the $n$-th roots of unity.
\end{lemma}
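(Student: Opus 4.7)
The plan is to apply the standard DFT diagonalization of circulant matrices simultaneously to each of the four blocks, and then to rearrange rows and columns by a suitable permutation to obtain the claimed block-diagonal form.

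Concretely, let $\omega = e^{2\pi i / n}$ and let $F$ denote the $n \times n$ DFT matrix with entries $F_{jk} = \tfrac{1}{\sqrt{n}}\, \omega^{jk}$ for $0 \le j, k \le n-1$. It is a standard fact that $F$ is unitary and that any circulant matrix $A^{(j)}$ of the given form satisfies
\[
    F^* A^{(j)} F = \diag\bigl( P_j(1),\, P_j(\omega),\, P_j(\omega^2),\, \ldots,\, P_j(\omega^{n-1}) \bigr),
\]
since the columns of $F$ are a common eigenbasis for all circulants, with eigenvalues obtained by evaluating the corresponding representer polynomial at the $n$-th roots of unity. The first step of the proof is simply to cite or briefly reprove this standard identity.

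Next, I would conjugate the $2n \times 2n$ block matrix in \eqref{paux_1} by the block-diagonal unitary $F \oplus F = \diag(F, F)$. Since the conjugation respects the block structure, each of the four blocks is individually diagonalized, yielding the matrix
\[
    \begin{bmatrix}
        D_0 & D_1 \\
        D_2 & D_3
    \end{bmatrix}, \qquad D_j = \diag\bigl( P_j(1),\, P_j(\omega),\, \ldots,\, P_j(\omega^{n-1}) \bigr).
\]

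Finally, I would define the permutation $\pi$ on $\{0, 1, \ldots, 2n-1\}$ that interleaves the indices of the two blocks, sending $k \mapsto 2k$ and $n + k \mapsto 2k + 1$ for $0 \le k \le n - 1$, and conjugate by the corresponding permutation matrix $P_\pi$. In the resulting $2n \times 2n$ matrix, the entries from $D_0, D_1, D_2, D_3$ associated with the same root $\zeta = \omega^k$ collect into a single $2 \times 2$ block $\bigl[\begin{smallmatrix} P_0(\zeta) & P_1(\zeta) \\ P_2(\zeta) & P_3(\zeta) \end{smallmatrix}\bigr]$, while all off-diagonal entries vanish because $D_0, D_1, D_2, D_3$ are diagonal. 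This produces precisely the claimed direct sum.

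I do not anticipate any genuine obstacle: the whole argument is a routine application of the spectral theory of circulants plus a permutation bookkeeping step. The only care needed is to verify that the permutation $\pi$ indeed collects the four diagonal entries at position $k$ into one $2 \times 2$ diagonal block, which is a direct computation.
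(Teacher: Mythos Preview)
Your proposal is correct and is essentially identical to the paper's own proof: the paper also conjugates by the block-diagonal DFT matrix (using the unnormalized version $U$ with $U_{k,\ell}=\omega^{(k-1)(\ell-1)}$ rather than the unitary $F$, an immaterial difference) and then applies the interleaving permutation $1,\,n+1,\,2,\,n+2,\,\ldots,\,n,\,2n$ to collect each $\zeta$ into a $2\times 2$ block.
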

\begin{proof}
    Let $\omega = e^{2 \pi i / n}$ and let $U \in \mathbb{C}^{n \times n}$ be defined as
    \[
        U_{k, \ell} = \omega^{(k - 1)(\ell - 1)} \quad (k, \ell = 1, 2, \ldots, n) .
    \]
    Observe that $U U^* = U^* U = n I_n$ and $A^{(j)} U = U D^{(j)}$, where
    \[
        D^{(j)} = \diag(P_j(1), P_j(\omega), P_j(\omega^2), \ldots, P_j(\omega^{n - 1})) \quad (j = 0, 1, 2, 3).
    \]
    Therefore,
    \[
        \begin{bmatrix}
            A^{(0)} & A^{(1)}\\
            A^{(2)} & A^{(3)}
        \end{bmatrix} \begin{bmatrix}
        U & O\\
        O & U
        \end{bmatrix} = \begin{bmatrix}
            A^{(0)} U & A^{(1)} U\\
            A^{(2)} U & A^{(3)} U
        \end{bmatrix} = \begin{bmatrix}
            U D^{(0)} & U D^{(1)}\\
            U D^{(2)} & U D^{(3)}
        \end{bmatrix} = \begin{bmatrix}
        U & O\\
        O & U
        \end{bmatrix} \begin{bmatrix}
            D^{(0)} & D^{(1)}\\
            D^{(2)} & D^{(3)}
        \end{bmatrix} ,
    \]
    which implies that the matrix \eqref{paux_1} is similar to
    \begin{equation}\label{paux_2}
        \begin{bmatrix}
            P_0(1) & 0 & \cdots & 0 & P_1(1) & 0 & \cdots & 0\\
            0 & P_0(\omega) & \cdots & 0 & 0 & P_1(\omega) & \cdots & 0\\
            \vdots & \vdots & \ddots & \vdots & \vdots & \vdots & \ddots & \vdots\\
            0 & 0 & \cdots & P_0(\omega^{n - 1}) & 0 & 0 & \cdots & P_1(\omega^{n - 1})\\
            P_2(1) & 0 & \cdots & 0 & P_3(1) & 0 & \cdots & 0\\
            0 & P_2(\omega) & \cdots & 0 & 0 & P_3(\omega) & \cdots & 0\\
            \vdots & \vdots & \ddots & \vdots & \vdots & \vdots & \ddots & \vdots\\
            0 & 0 & \cdots & P_2(\omega^{n - 1}) & 0 & 0 & \cdots & P_3(\omega^{n - 1})
        \end{bmatrix} .
    \end{equation}
    The result now follows by simultaneously rearranging the rows and columns of \eqref{paux_2} in the order $1, n + 1, \linebreak 2, n + 2, 3, n + 3, \ldots, n, 2n$.
\end{proof}

The \emph{connection set} of a binary circulant matrix $C \in \mathbb{R}^{n \times n}$ is the set comprising the integers $j \in \mathbb{Z}_n$ such that $C_{1, 1 + j} = 1$, with the index addition being done modulo $n$. As a direct consequence to Lemmas~\ref{vt_nut_lemma} and~\ref{bicirc_lemma}, we obtain the following result on the nut property of Cayley graphs for the dihedral group.

\begin{lemma}\label{dih_cay_lemma}
    For some $n \ge 3$, let $G$ be the graph $\Cay(\Dih(n), \{ r^{a_1}, r^{a_2}, \ldots, r^{a_k}, r^{b_1} s, r^{b_2} s, \ldots, r^{b_\ell} s \})$, where $k, \ell \in \mathbb{N}_0$, $1 \le a_1 < a_2 < \cdots < a_k < n$ and $0 \le b_1 < b_2 < \cdots < b_\ell < n$. Also, for each $n$-th root of unity $\zeta$, let
    \[
        A_\zeta = \begin{bmatrix}
            \sum_{j = 1}^{k} \zeta^{a_j} & \sum_{j = 1}^{\ell} \zeta^{-b_j}\\
            \sum_{j = 1}^{\ell} \zeta^{b_j} & \sum_{j = 1}^{k} \zeta^{a_j}
        \end{bmatrix} .
    \]
    Then $G$ is a nut graph if and only if exactly one of the $A_\zeta$ matrices has a simple eigenvalue zero, while all the others are invertible.
\end{lemma}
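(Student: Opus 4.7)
The plan is to decompose the adjacency matrix $A(G)$ into a $2 \times 2$ block form with circulant blocks, apply Lemma \ref{bicirc_lemma} to obtain a similarity with $\bigoplus_\zeta A_\zeta$, and then conclude via Lemma \ref{vt_nut_lemma}, using the fact that every Cayley graph is vertex-transitive.

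First, I would order the vertices of $\Dih(n)$ as the $n$ rotations $r^0, r^1, \ldots, r^{n-1}$ followed by the $n$ reflections. The choice of ordering of the reflections is crucial for the blocks to come out circulant: the naive ordering $r^0 s, r^1 s, \ldots, r^{n-1} s$ makes the off-diagonal blocks anti-circulant, since $(r^j s)(r^i)^{-1} = r^{i + j} s$ depends on $i + j$ rather than $j - i$. To remedy this, I would instead place $r^{-i} s$ in position $i$ for $i = 0, 1, \ldots, n - 1$. A direct computation of the products $v u^{-1}$ then shows that all four blocks of the reordered $A(G)$ are circulant, with the rotation-rotation and reflection-reflection blocks both coming from the connection set $\{a_1, \ldots, a_k\}$ (for the latter, using that this set is closed under negation modulo $n$, because $C$ is closed under inversion), while the rotation-reflection and reflection-rotation blocks arise from the connection sets $\{-b_1, \ldots, -b_\ell\}$ and $\{b_1, \ldots, b_\ell\}$, respectively.

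Next, I would invoke Lemma \ref{bicirc_lemma} to conclude that the reordered $A(G)$ is similar to $\bigoplus_\zeta A_\zeta$, where $\zeta$ ranges over the $n$-th roots of unity and $A_\zeta$ is precisely the $2 \times 2$ matrix in the statement, after noting that $\sum_{j = 1}^k \zeta^{-a_j} = \sum_{j = 1}^k \zeta^{a_j}$ on account of the same closure-under-negation observation. It then follows immediately that
\[
    \eta(G) = \sum_\zeta \eta(A_\zeta) .
\]
Since $G$ is a Cayley graph, hence vertex-transitive, Lemma \ref{vt_nut_lemma} gives that $G$ is a nut graph if and only if $\eta(G) = 1$, which happens precisely when exactly one of the matrices $A_\zeta$ has a simple eigenvalue zero while all the others are invertible.

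The main (minor) obstacle is pinning down the right indexing of the reflection vertices so that the off-diagonal blocks are genuinely circulant rather than anti-circulant; once this relabeling is in place, the lemma follows formally from the tools already established in Lemmas \ref{bicirc_lemma} and \ref{vt_nut_lemma}.
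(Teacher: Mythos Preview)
Your proposal is correct and mirrors the paper's proof essentially step for step: the paper uses the same vertex ordering $e, r, \ldots, r^{n-1}, s, r^{-1}s, \ldots, r^{-(n-1)}s$, obtains the same $2\times 2$ block-circulant form (with the two diagonal blocks equal), invokes Lemma~\ref{bicirc_lemma} to get similarity with $\bigoplus_\zeta A_\zeta$, and concludes via Lemma~\ref{vt_nut_lemma}. Your explicit remarks about why the reversed indexing of reflections is needed and about the closure of $\{a_1,\ldots,a_k\}$ under negation are details the paper leaves implicit, but the argument is the same.
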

\begin{proof}
Observe that if we arrange the vertices of $G$ as $e, r, r^2, \ldots, r^{n - 1}, s, r^{-1} s, r^{-2} s, \ldots, r^{-(n - 1)} s$, then $A(G)$ has the form
\[
    \begin{bmatrix}
        C_0 & C_1\\
        C_2 & C_0
    \end{bmatrix} ,
\]
where $C_0$, $C_1$ and $C_2$ are the binary circulant matrices with the connection sets
\[
    \{ a_1, a_2, \ldots, a_k \}, \quad \{ -b_1, -b_2, \ldots, -b_\ell \} \quad \mbox{and} \quad \{ b_1, b_2, \ldots, b_\ell \} ,
\]
respectively. Therefore, Lemma~\ref{bicirc_lemma} implies that $A(G)$ is similar to $\bigoplus_{\zeta} A_\zeta$, where $\zeta$ ranges over the $n$-th roots of unity. By Lemma \ref{vt_nut_lemma}, we conclude that $G$ is a nut graph if and only if exactly one of the $A_\zeta$ matrices has a simple eigenvalue zero, while all the others have no eigenvalue zero.
\end{proof}

For any $n \in \mathbb{N}$, the \emph{radical} of $n$, denoted by $\rad(n)$, is the largest square-free positive divisor of $n$. For each $n \in \mathbb{N}$, the \emph{cyclotomic polynomial} $\Phi_n(x)$ is defined as
\[
    \Phi_n(x) = \prod_{\zeta} (x - \zeta),
\]
where $\zeta$ ranges over the primitive $n$-th roots of unity. It is known that for every $n \in \mathbb{N}$, the polynomial $\Phi_n(x)$ has integer coefficients and is irreducible in $\mathbb{Q}[x]$; see, e.g., \cite[Chapter~33]{Gallian2017}. Therefore, any $P(x) \in \mathbb{Q}[x]$ has a root that is a primitive $n$-th root of unity if and only if $\Phi_n(x) \mid P(x)$. The following result is also well known.

\begin{lemma}\label{cyclotomic_reduction_prime}
    Suppose that $p^2 \mid n$, where $n \in \mathbb{N}$ and $p$ is a prime. Then $\Phi_n(x) = \Phi_{n/p}(x^p)$.
\end{lemma}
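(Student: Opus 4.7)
The plan is to show that $\Phi_n(x)$ and $\Phi_{n/p}(x^p)$ are two monic polynomials of the same degree sharing exactly the same simple roots, which forces equality. First, I would compute the degrees: $\deg \Phi_n = \varphi(n)$, while $\deg \Phi_{n/p}(x^p) = p\,\varphi(n/p)$. Because $p^2 \mid n$, the integer $n/p$ is itself divisible by $p$, so the standard identity $\varphi(pm) = p\,\varphi(m)$, valid whenever $p \mid m$, yields $\varphi(n) = p\,\varphi(n/p)$. Both polynomials are monic, the first by the definition of cyclotomic polynomials and the second because $\Phi_{n/p}$ is monic.

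Next, I would verify that $\alpha$ is a root of $\Phi_{n/p}(x^p)$ if and only if $\alpha$ is a primitive $n$-th root of unity. One direction is immediate: if $\alpha$ has order $n$, then $\alpha^p$ has order $n/\gcd(n, p) = n/p$, so it is a primitive $(n/p)$-th root of unity. Conversely, suppose $\alpha^p$ has order $n/p$ and let $k$ denote the order of $\alpha$. Then $k/\gcd(k,p) = n/p$, and since $p \mid n/p$ one concludes $p \mid k$, which upgrades the equation to $k/p = n/p$, that is, $k = n$. Thus the two polynomials have identical root sets on the unit circle.

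To handle multiplicities, I would observe that the $p$ distinct complex $p$-th roots of each primitive $(n/p)$-th root of unity $\zeta$ are pairwise distinct as $\zeta$ varies, so altogether $\Phi_{n/p}(x^p)$ has $p\,\varphi(n/p) = \varphi(n)$ distinct roots. Since this matches the common degree, every root is simple in both polynomials, and equality of two monic polynomials of the same degree with identical simple roots follows at once. I do not anticipate any real obstacle; the only subtlety is the order-theoretic step establishing that $\alpha$ must have order exactly $n$, which is precisely where the hypothesis $p^2 \mid n$ is essential (via the divisibility $p \mid n/p$).
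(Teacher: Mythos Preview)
Your argument is correct: the degree computation via $\varphi(n)=p\,\varphi(n/p)$ when $p\mid n/p$, the bijection between primitive $n$-th roots of unity and roots of $\Phi_{n/p}(x^p)$, and the count showing all roots are simple are all sound. The paper itself does not supply a proof of this lemma; it is simply stated as ``well known,'' so there is no approach in the paper to compare against.
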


\noindent
As an immediate consequence of Lemma \ref{cyclotomic_reduction_prime}, we get the next corollary.

\begin{corollary}\label{cyclotomic_reduction_rad}
    For any $n \in \mathbb{N}$, we have $\Phi_n(x) = \Phi_{\rad(n)}(x^{n / \rad(n)})$.
\end{corollary}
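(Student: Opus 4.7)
The plan is to induct on the quantity $n/\rad(n)$, iteratively stripping off prime factors of $n$ via Lemma \ref{cyclotomic_reduction_prime} until we reduce $n$ down to its radical. Equivalently, one can induct on $n$ itself with $\rad(n)$ held fixed.

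For the base case, when $n = \rad(n)$ (i.e.\ $n$ is square-free), we have $n/\rad(n) = 1$ and the claim reads $\Phi_n(x) = \Phi_n(x^1)$, which is trivially true. For the inductive step, suppose $n > \rad(n)$, so that $n/\rad(n) \ge 2$ and there exists a prime $p$ with $p^2 \mid n$. Applying Lemma \ref{cyclotomic_reduction_prime} yields
\[
    \Phi_n(x) = \Phi_{n/p}(x^p) .
\]
The key small observation is that since $p^2 \mid n$ we have $p \mid n/p$, so $n$ and $n/p$ share the same set of prime divisors; hence $\rad(n/p) = \rad(n)$, and consequently
\[
    \frac{n/p}{\rad(n/p)} = \frac{n}{p \, \rad(n)} < \frac{n}{\rad(n)} .
\]
The induction hypothesis therefore applies to $n/p$, giving $\Phi_{n/p}(y) = \Phi_{\rad(n)}\bigl(y^{n/(p\, \rad(n))}\bigr)$. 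Substituting $y = x^p$ produces
\[
    \Phi_n(x) = \Phi_{\rad(n)}\bigl(x^{p \cdot n/(p\, \rad(n))}\bigr) = \Phi_{\rad(n)}\bigl(x^{n/\rad(n)}\bigr),
\]
which closes the induction.

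I would not expect any serious obstacle here: the only technical point is checking that the radical is preserved under division by $p$, and this relies precisely on the hypothesis $p^2 \mid n$ that is used to invoke Lemma \ref{cyclotomic_reduction_prime} in the first place. Thus the corollary is really just a packaging of repeated application of the lemma.
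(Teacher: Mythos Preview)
Your proof is correct and is precisely the argument the paper has in mind: the paper states the corollary as ``an immediate consequence of Lemma~\ref{cyclotomic_reduction_prime}'' without spelling out the induction, and your write-up simply makes that repeated application of the lemma explicit. The only nontrivial observation---that $\rad(n/p)=\rad(n)$ because $p^2\mid n$---is exactly what is needed, and you handle it correctly.
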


\noindent
We will frequently use Corollary \ref{cyclotomic_reduction_rad} together with the following folklore lemma.

\begin{lemma}[\hspace{1sp}{\cite[Lemma 18]{DaBaPiZi2024}}]\label{polynomial_division_lemma}
    Let $V(x), W(x) \in \mathbb{Q}[x], \, W(x) \not\equiv 0$, be such that $W(x) \mid V(x)$ and the powers of all the nonzero terms of $W(x)$ are divisible by $\beta \in \mathbb{N}$. Also, for any $j \in \{0, 1, \ldots, \beta - 1\}$, let $V^{(\beta, j)}(x)$ be the polynomial comprising the terms of $V(x)$ whose power is congruent to $j$ modulo $\beta$. Then $W(x) \mid V^{(\beta, j)}(x)$ for every $j \in \{ 0, 1, \ldots, \beta - 1 \}$.
\end{lemma}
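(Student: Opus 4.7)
The plan is to start from the divisibility hypothesis and write $V(x) = W(x) Q(x)$ for some $Q(x) \in \mathbb{Q}[x]$, which exists since the statement takes place over a field. I would then decompose the quotient by residue classes modulo $\beta$: set $Q(x) = \sum_{j=0}^{\beta-1} Q^{(\beta,j)}(x)$, where $Q^{(\beta,j)}(x)$ collects precisely those terms of $Q(x)$ whose exponents are congruent to $j$ modulo $\beta$ (using the same convention as in the statement).

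The key observation is the following closure property. Since every nonzero term of $W(x)$ has exponent divisible by $\beta$, multiplying $W(x)$ by a single monomial $c x^k$ with $k \equiv j \pmod \beta$ yields a polynomial whose nonzero terms all have exponents congruent to $j$ modulo $\beta$. By linearity, $W(x) Q^{(\beta,j)}(x)$ has all of its nonzero terms supported in the residue class $j$ modulo $\beta$.

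Now expand $V(x) = W(x) Q(x) = \sum_{j=0}^{\beta-1} W(x) Q^{(\beta,j)}(x)$. The $\beta$ summands live in pairwise disjoint residue classes of exponents, so no cancellation can occur between distinct summands. Comparing coefficients residue class by residue class therefore gives $V^{(\beta,j)}(x) = W(x) Q^{(\beta,j)}(x)$ for each $j \in \{0, 1, \ldots, \beta - 1\}$, from which the claimed divisibility $W(x) \mid V^{(\beta,j)}(x)$ follows immediately.

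The argument is essentially bookkeeping, so I do not foresee any genuine obstacle; the only mild care needed is to phrase the ``no cancellation between different residue classes'' step cleanly, for which one can either invoke linearity of the projection that extracts terms of exponent $\equiv j \pmod \beta$, or simply compare coefficients of $x^m$ on both sides of $V = WQ$ grouped by $m \bmod \beta$.
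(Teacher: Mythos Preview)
Your argument is correct and is precisely the standard proof of this folklore fact. Note, however, that the paper does not actually prove this lemma: it is quoted from \cite[Lemma~18]{DaBaPiZi2024} and stated without proof, so there is no in-paper argument to compare against. Your write-up would serve perfectly well as a self-contained proof here.
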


We end the section with the next theorem by Filaseta and Schinzel on the divisibility of lacunary polynomials by cyclotomic polynomials.

\begin{theorem}[\hspace{1sp}{\cite[Theorem~2]{FiSchi2004}}]\label{filaschi_th}
Let $P(x) \in \mathbb{Z}[x]$ have $N$ nonzero terms and suppose that $\Phi_n(x) \mid P(x)$ for some $n \in \mathbb{N}$. Suppose further that $p_1, p_2, \ldots, p_k$ are distinct primes satisfying
\[
    \sum_{j=1}^k (p_j-2) > N - 2.
\]
Let $e_j$ be the largest exponent such that $p_j^{e_j} \mid n$. Then for at least one $j \in \{1, 2, \ldots, k\}$, we have $\Phi_{m}(x)\mid P(x)$, where $m = n / p_j^{e_j}$.
\end{theorem}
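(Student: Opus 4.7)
The plan is to argue by contradiction: assume $\Phi_n(x)\mid P(x)$ while $\Phi_{n/p_j^{e_j}}(x)\nmid P(x)$ for every $j\in\{1,\ldots,k\}$, and derive a lower bound on $N$ that contradicts $\sum_j(p_j-2)>N-2$. The central tool is the exceptional sparsity of the cyclotomic polynomial $\Phi_{p^e}(x)=\Phi_p(x^{p^{e-1}})=1+x^{p^{e-1}}+\cdots+x^{(p-1)p^{e-1}}$, which forces the exponents of $P$ into rigid arithmetic-progression patterns modulo each $p_j^{e_j}$.

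The single-prime reduction runs as follows. Fix a prime $p$ with $p^e\|n$, set $m=n/p^e$ (so $\gcd(p^e,m)=1$), and decompose a primitive $n$-th root of unity as $\zeta=\xi\eta$ with $\xi$ a primitive $p^e$-th root and $\eta$ a primitive $m$-th root. Group the $N$ nonzero terms of $P(x)=\sum_{i=1}^N c_i x^{a_i}$ by $a_i\bmod p^e$ and set
\[
B_k(\eta)=\sum_{i:\,a_i\equiv k\,(p^e)}c_i\,\eta^{a_i}\qquad(k=0,1,\ldots,p^e-1),
\]
so that $P(\zeta)=\sum_{k=0}^{p^e-1}\xi^k B_k(\eta)$. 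Vanishing of $P(\zeta)$ for all primitive $p^e$-th roots $\xi$, together with the irreducibility of $\Phi_{p^e}(x)$ over $\mathbb{Q}(\eta)$ (a consequence of $\gcd(p^e,m)=1$), yields $\Phi_{p^e}(x)\mid\sum_k B_k(\eta)\,x^k$ in $\mathbb{Q}(\eta)[x]$. Applying Lemma~\ref{polynomial_division_lemma} with $\beta=p^{e-1}$, or matching coefficients directly, forces $B_k(\eta)=r_{k\bmod p^{e-1}}(\eta)$ for some $r_0,\ldots,r_{p^{e-1}-1}\in\mathbb{Q}(\eta)$. Hence $P(\eta)=p\sum_j r_j(\eta)$, and $\Phi_m\nmid P$ produces a primitive $m$-th root $\eta$ together with an index $j_0$ such that $r_{j_0}(\eta)\ne 0$, forcing $B_{j_0+\ell p^{e-1}}(\eta)\ne 0$ for every $\ell\in\{0,1,\ldots,p-1\}$. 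Each such nonzero $B_k$ contains at least one $a_i$ in the corresponding residue class modulo $p^e$, and these $p$ classes are pairwise distinct, so $N\ge p$.

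Extending this bound from a single prime to the required inequality $N\ge\sum_j p_j-2(k-1)$ is the main obstacle. A naive union of the single-prime constraints only yields $N\ge\max_j p_j$, which is too weak: already in the case $k=2$ with $p_1=p_2=3$, three exponents lying on a common arithmetic progression modulo $\operatorname{lcm}(p_1^{e_1-1},p_2^{e_2-1})$ could surject onto the forced residue classes for both primes simultaneously. The Filaseta--Schinzel argument must therefore combine the $k$ divisibilities in concert, presumably by inducting on $k$ and peeling off one prime at a time, at each step preserving enough sparsity information (through repeated applications of Corollary~\ref{cyclotomic_reduction_rad} and Lemma~\ref{polynomial_division_lemma}) to continue the iteration. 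I expect the combinatorial bookkeeping of this multi-prime step to be the technical heart of the proof.
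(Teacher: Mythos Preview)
The paper does not prove this theorem at all: it is quoted verbatim from Filaseta and Schinzel \cite{FiSchi2004} as background material at the end of Section~\ref{sc_prel}, with no accompanying argument. There is therefore nothing in the paper to compare your proposal against.

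As for the proposal itself, it is not a proof but a partial sketch. Your single-prime reduction is essentially correct and yields $N\ge p$ whenever $\Phi_n\mid P$ but $\Phi_{n/p^e}\nmid P$. However, you explicitly acknowledge that the passage from one prime to $k$ primes---which must produce the additive bound $N-2\ge\sum_j(p_j-2)$ rather than merely $N\ge\max_j p_j$---is ``the main obstacle'' and ``the technical heart of the proof,'' and you do not carry it out. Saying that you \emph{expect} an induction on $k$ with repeated applications of Corollary~\ref{cyclotomic_reduction_rad} and Lemma~\ref{polynomial_division_lemma} to work is not the same as executing it; in particular, you have not explained how to prevent the residue classes forced by different primes from overlapping, which is precisely the difficulty you yourself flag in the $k=2$, $p_1=p_2=3$ example. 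Until that step is supplied, the proposal remains a plausible outline rather than a proof.
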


\section{Auxiliary polynomials}\label{sc_aux}

In the present section, we investigate the divisibility of four auxiliary families of polynomials by the cyclotomic polynomials. More precisely, we are interested in the polynomials
\begin{align*}
    Q_t(x) \coloneqq \,\, & x^{4t + 7} - x^{4t + 5} - x^{4t + 4} + 2x^{2t + 4}+ x^{2t + 3} + x^{2t + 2} + x^{2t} - 2x^{t + 3} - x^2 - 1,\\
    R_t(x) \coloneqq \,\, &x^{8t + 15} + x^{8t + 14} + x^{8t + 11} - x^{8t + 10} - x^{8t + 8} + 2x^{6t + 9} - x^{4t + 15} - x^{4t + 11}\\
    &- x^{4t + 9} + 2x^{4t + 8} - 2x^{4t + 7} + x^{4t + 6} + x^{4t + 4} + x^{4t} - 2x^{2t + 6} + x^7 + x^5 - x^4 - x - 1,\\
    S_t(x) \coloneqq \,\, & x^{4t + 13} + x^{4t + 11} + x^{4t + 10} + x^{4t + 9} - x^{4t + 8} - x^{2t + 13} - x^{2t + 10} - x^{2t + 9}\\
    &+ 3 x^{2t + 7} + x^{2t + 5} - x^{2t + 4} + x^{2t + 3} - x^{2t + 2} + x^{2t + 1} - 2 x^{t + 6} + x^{6} - x^{5} - x - 1,\\
    T_t(x) \coloneqq \,\, & x^{8t + 27} + x^{8t + 26} + x^{8t + 25} + x^{8t + 22} + x^{8t + 20} + x^{8t + 18} + x^{8t + 17} - x^{8t + 16} - x^{8t + 15}\\
    &+ 2 x^{6t + 15} - x^{4t + 26} - x^{4t + 25} + x^{4t + 23} - x^{4t + 21} - x^{4t + 20} + x^{4t + 19} - x^{4t + 18} - x^{4t + 17}\\
    &+ 3 x^{4t + 14} - 3 x^{4t + 13} + x^{4t + 10} + x^{4t + 9} - x^{4t + 8} + x^{4t + 7} + x^{4t + 6} - x^{4t + 4} + x^{4t + 2}\\
    &+ x^{4t + 1} - 2 x^{2t + 12} + x^{12} + x^{11} - x^{10} - x^9 - x^7 - x^5 - x^2 - x - 1,
\end{align*}
for each $t \in \mathbb{N}_0$. The four subsections of this section correspond to the four families of polynomials that are being studied.

\subsection{\texorpdfstring{$Q_t(x)$}{Qₜ(x)} polynomials}

In this subsection we investigate the $Q_t(x)$ polynomials and our main result is the following lemma.

\begin{lemma}\label{qt_lemma}
    For any $t \in \mathbb{N}_0$, we have $\Phi_b(x) \nmid Q_t(x)$ for each $b \ge 2$.
\end{lemma}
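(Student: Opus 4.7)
The plan is to argue by contradiction, combining Theorem \ref{filaschi_th}, Corollary \ref{cyclotomic_reduction_rad}, and Lemma \ref{polynomial_division_lemma}, with finitely many residual cases dispatched by direct computation.

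Suppose $\Phi_b(x) \mid Q_t(x)$ for some $b \ge 2$ and $t \in \mathbb{N}_0$, and choose such a $b$ minimal. For generic $t$, the polynomial $Q_t(x)$ has $N = 10$ nonzero terms; for the few small values of $t$ where exponents collide (notably $t \in \{0, 1, 3\}$), $Q_t$ can be handled directly---for instance, $Q_0(x) = x^3(x - 1)(x^3 + x^2 + 1)$, whose only cyclotomic factor is $\Phi_1$ since $x^3 + x^2 + 1$ is irreducible over $\mathbb{Q}$ and has degree $3$, which is not of the form $\phi(n)$. For the remaining $t$, minimality of $b$ together with Theorem \ref{filaschi_th} forbids any subset $S \subseteq \pi(b)$ from satisfying $\sum_{p \in S}(p - 2) > N - 2 = 8$ (otherwise the theorem would yield a strictly smaller cyclotomic divisor). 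This rules out every prime $p \ge 11$ in $\pi(b)$ and excludes $\{3, 5, 7\} \subseteq \pi(b)$, so $\pi(b)$ is restricted to one of finitely many subsets of $\{2, 3, 5, 7\}$.

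To bound the prime exponents, I would set $b_0 = \rad(b)$ and $k = b / b_0$. Corollary \ref{cyclotomic_reduction_rad} gives $\Phi_b(x) = \Phi_{b_0}(x^k)$, and when $k \ge 2$, Lemma \ref{polynomial_division_lemma} (applied with $\beta = k$) yields $\Phi_{b_0}(y) \mid R_j(y)$ for every $j \in \{0, 1, \ldots, k-1\}$, where $Q_t^{(k, j)}(x) = x^j R_j(x^k)$ is the $j$-th residue class partition of $Q_t$ and the factor $x^j$ cancels since $\gcd(\Phi_b(x), x) = 1$. The exponents $4t+7, 4t+5, 4t+4, 2t+4, 2t+3, 2t+2, 2t, t+3, 2, 0$ distribute among the $k$ residue classes in a manner determined by $t \bmod k$, so each $R_j$ typically has far fewer than $10$ nonzero terms. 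Reapplying Theorem \ref{filaschi_th} to each $R_j$ then further restricts $b_0$ and $k$, leaving only finitely many candidate pairs $(b_0, k)$.

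Once the candidate set of $b$ is finite, for each such $b$ and each residue $t_0 \in \{0, 1, \ldots, b-1\}$ one verifies by direct substitution of primitive $b$-th roots of unity, or equivalently by computing $\gcd(Q_t(x), \Phi_b(x))$ in $\mathbb{Q}[x]$, that $\Phi_b(x) \nmid Q_t(x)$; this matches the computer-assisted component mentioned in the paper's introduction and fits naturally into the \texttt{Python}/\texttt{SageMath} scripts from \cite{GitHub}. The hardest part will be controlling the combinatorial branching: tracking the residues of ten exponents modulo each admissible $k$ produces many subcases, so a careful organisation (for example, first restricting $k$ via Lemma \ref{polynomial_division_lemma} before invoking Theorem \ref{filaschi_th} on the sparser $R_j$) will be essential to keep the analysis tractable before handing off to the computer.
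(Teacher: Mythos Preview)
Your overall strategy—Filaseta--Schinzel plus radical reduction plus a finite computer check—is the same as the paper's, but the minimality argument has a genuine gap. When $b$ is a prime power $p^a$, Theorem~\ref{filaschi_th} applied with the single prime $p$ yields only $\Phi_{b/p^a} = \Phi_1 \mid Q_t$; since $Q_t(1) = 0$, this is true and does not contradict minimality over $b \ge 2$. In particular, minimality alone cannot exclude $b = p$ for a prime $p \ge 11$. The paper closes this gap with a separate argument (Claim~\ref{prime_claim_1}): reducing every exponent modulo $p$ produces a polynomial $Q_t^{\bmod p}$ of degree at most $p - 1 = \deg \Phi_p$, so either $Q_t^{\bmod p} \equiv 0$ (ruled out by Claim~\ref{power_list_claim_1}, since $p \ge 11 \ge 6$) or $Q_t^{\bmod p}$ is a nonzero scalar multiple of $\Phi_p = \sum_{j=0}^{p-1} x^j$, impossible because $Q_t^{\bmod p}$ has at most ten nonzero terms while $\Phi_p$ has $p \ge 11$.

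Your treatment of $k = b/\rad(b)$ is also less sharp than the paper's. Rather than re-invoking Filaseta--Schinzel on the residue-class pieces $R_j$, the paper observes directly (Claim~\ref{power_list_claim_1}) that for every $\beta \ge 6$ and every $t$, some exponent among $4t+7,\ldots,0$ has a residue modulo $\beta$ shared by no other; Lemma~\ref{polynomial_division_lemma} then forces $\Phi_b$ to divide a nonzero monomial, which is impossible for $b \ge 2$, giving $k < 6$ immediately. This also renders your separate handling of small $t$ unnecessary: an exponent with a unique residue modulo $\beta$ is a fortiori not equal to any other exponent, so its coefficient cannot be cancelled.
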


\noindent
We begin with the next claim that can be conveniently proved via computer as shown in \cite{GitHub}.

\begin{claim}\label{power_list_claim_1}
    For each $\beta \ge 6$, there exists an element of the sequence $4t + 7, 4t + 5, 4t + 4, 2t + 4, 2t + 3, \linebreak 2t + 2, 2t, t + 3, 2, 0$ with a unique remainder modulo $\beta$.
\end{claim}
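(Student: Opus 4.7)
Because every exponent in the sequence is a linear function of $t$ with coefficient in $\{0, 1, 2, 4\}$, the multiset of ten residues modulo $\beta$ depends only on $t \bmod \beta$. Thus, for each fixed $\beta$ the claim reduces to a finite check over the $\beta$ residue classes $t \in \{0, 1, \ldots, \beta - 1\}$.

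To handle all $\beta \ge 6$ simultaneously, I would split the analysis into a generic regime and a small regime. In the generic regime, I would take $4t + 7$ as the primary candidate and observe that it has a residue modulo $\beta$ distinct from every other entry unless $\beta$ divides one of the nine differences
\[
    2,\ 3,\ 2t + 3,\ 2t + 4,\ 2t + 5,\ 2t + 7,\ 3t + 4,\ 4t + 5,\ 4t + 7 .
\]
For $\beta \ge 6$ the first two are ruled out outright. Each of the remaining seven is a nonzero linear polynomial in $t$ with leading coefficient at most $4$, hence vanishes modulo $\beta$ for at most $\gcd(\text{leading coefficient}, \beta) \le 4$ residue classes of $t$. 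So the set of ``bad'' residues of $t \bmod \beta$ for which $4t + 7$ can fail to be unique has size at most $28$, independent of $\beta$. For each such bad residue, I would switch to a secondary candidate --- for example $0$, $2$, $t + 3$, or $2t$ --- and verify by the same kind of difference analysis that at least one of them is uniquely represented; the collision conditions for the secondary candidates involve different linear combinations of $t$, and a short case split shows that they cannot all fail at once once $\beta$ exceeds some explicit threshold $B$.

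For the small range $6 \le \beta \le B$ the claim reduces to a direct enumeration over the pairs $(\beta, t)$ with $0 \le t < \beta$, which is precisely the computer check referenced in \cite{GitHub}. The main obstacle is not conceptual but combinatorial: there are many residue-class patterns to track, and one must confirm that the collection of primary and secondary candidates is rich enough that \emph{no} residue class of $t$ modulo $\beta$ simultaneously kills every candidate. This bookkeeping is mechanical but tedious, which is why the authors present the step as a short \texttt{Python}/\texttt{SageMath} verification rather than a hand-written case analysis.
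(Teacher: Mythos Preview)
Your plan is sound and matches the paper, which offers no written argument and simply states that the claim ``can be conveniently proved via computer as shown in \cite{GitHub}.'' Your reduction to a finite verification---noting that the residue pattern depends only on $t \bmod \beta$ and sketching a primary/secondary-candidate strategy to bound the range of $\beta$ that must actually be checked---is a reasonable explanation of why such a script suffices, although the paper itself does not spell any of this out.
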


\noindent
We also need the following two auxiliary results.

\begin{claim}\label{radical_claim_1}
    Suppose that for some $t \in \mathbb{N}_0$ and $b \in \mathbb{N}$, we have $\Phi_b(x) \mid Q_t(x)$. Then $\frac{b}{\rad(b)} < 6$.
\end{claim}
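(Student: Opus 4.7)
The plan is to argue by contradiction: I would suppose that $\beta := b/\rad(b) \ge 6$ and combine the cyclotomic reduction identity with the residue splitting lemma to isolate a single nonzero monomial of $Q_t(x)$ which would have to be divisible by $\Phi_b(x)$, yielding an impossibility.

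To set this up, first I would apply Corollary \ref{cyclotomic_reduction_rad} to write $\Phi_b(x) = \Phi_{\rad(b)}(x^\beta)$, so that every nonzero term of $\Phi_b(x)$ has exponent a multiple of $\beta$. Together with the hypothesis $\Phi_b(x) \mid Q_t(x)$, Lemma \ref{polynomial_division_lemma} then yields
\[
    \Phi_b(x) \mid Q_t^{(\beta, j)}(x) \quad \text{for every } j \in \{0, 1, \ldots, \beta - 1\}.
\]
Since $\beta \ge 6$, Claim \ref{power_list_claim_1} furnishes an exponent $e$ from the sequence $4t+7, 4t+5, 4t+4, 2t+4, 2t+3, 2t+2, 2t, t+3, 2, 0$ whose residue modulo $\beta$ is attained by no other entry of the sequence. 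In particular, no other entry of the sequence equals $e$, so the corresponding part $Q_t^{(\beta, \, e \bmod \beta)}(x)$ reduces to a single monomial $c x^e$, where $c \in \{\pm 1, \pm 2\}$ is the coefficient of the corresponding term in the definition of $Q_t(x)$.

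Finally, the assumption $\beta \ge 6$ forces $b \ge 6 \rad(b) \ge 6$, so $\Phi_b(x)$ has positive degree and a nonzero constant term, since its roots are the (nonzero) primitive $b$-th roots of unity. Consequently, $\Phi_b(x)$ cannot divide the monomial $c x^e$, contradicting the previously established divisibility, and the claim follows. The main obstacle in this proof is really Claim \ref{power_list_claim_1} itself, which the paper resolves via the computer-assisted approach announced in the introduction; the remaining steps are bookkeeping within the cyclotomic polynomial framework and in particular do not require any further case analysis on $t$, because the unique-residue hypothesis guarantees the isolated entry has a unique numerical value among the list too, so its coefficient in $Q_t(x)$ is not perturbed by any sibling term.
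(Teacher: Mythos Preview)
Your argument is correct and follows essentially the same route as the paper: assume $\beta = b/\rad(b) \ge 6$, use Corollary~\ref{cyclotomic_reduction_rad} to see that $\Phi_b(x)$ lives in $\mathbb{Z}[x^\beta]$, apply Lemma~\ref{polynomial_division_lemma} and Claim~\ref{power_list_claim_1} to isolate a single nonzero monomial divisible by $\Phi_b(x)$, and derive a contradiction. Your write-up simply spells out a couple of points the paper leaves implicit (that a unique residue forces a unique value, and why $\Phi_b(x) \nmid c x^e$).
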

\begin{proof}
    By way of contradiction, suppose that $\frac{b}{\rad(b)} \ge 6$. By Corollary \ref{cyclotomic_reduction_rad}, it follows that the powers of all the nonzero terms of $\Phi_b(x)$ are divisible by $\frac{b}{\rad(b)}$. From Lemma \ref{polynomial_division_lemma} and Claim \ref{power_list_claim_1}, we conclude that $\Phi_b(x)$ divides a polynomial of the form $c x^\alpha$ for some $c \in \mathbb{Z} \setminus \{ 0 \}$ and $\alpha \in \mathbb{N}_0$, which yields a contradiction.
\end{proof}

\begin{claim}\label{prime_claim_1}
    For any $t \in \mathbb{N}_0$ and prime $p \ge 11$, we have $\Phi_p(x) \nmid Q_t(x)$.
\end{claim}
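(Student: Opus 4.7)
The plan is to suppose, for contradiction, that $\Phi_p(x) \mid Q_t(x)$ for some $t \in \mathbb{N}_0$ and prime $p \ge 11$, and to extract structural constraints. Since $\Phi_p(x) \mid x^p - 1$, the remainder $R(x) \coloneqq Q_t(x) \bmod (x^p - 1)$ satisfies $\Phi_p(x) \mid R(x)$ and $\deg R(x) < p$, so $R(x) = c \Phi_p(x)$ for some $c \in \mathbb{Q}$. Now $R(x)$ has at most ten nonzero terms (as does $Q_t(x)$), while $c \Phi_p(x)$ has either zero or $p \ge 11$ nonzero terms; hence $c = 0$, i.e.\ $R(x) \equiv 0$. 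Equivalently, within every residue class modulo $p$ the corresponding coefficients of $Q_t(x)$ sum to zero.

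I would then exploit the block structure of the ten powers of $Q_t(x)$, grouped by their coefficient of $t$: block $A = \{4t+7,\, 4t+5,\, 4t+4\}$ with coefficients $(+1,-1,-1)$, block $B = \{2t+4,\, 2t+3,\, 2t+2,\, 2t\}$ with coefficients $(+2,+1,+1,+1)$, the singleton $\{t+3\}$ with coefficient $-2$, and block $D = \{2,\, 0\}$ with coefficients $(-1,-1)$. Within each block, pairwise power-differences lie in $\{1,2,3,4\}$ and so are nonzero modulo $p \ge 11$. Hence every residue class modulo $p$ contains at most one element from each block; combined with the absence of zero coefficients, this forces each class to have between two and four elements.

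A short enumeration now finishes the argument. The only four-element class with coefficient sum zero is $\{4t+7,\, 2t+4,\, t+3,\, \epsilon\}$ with $\epsilon \in \{0,2\}$, and here $p \mid 2t+3$ and $p \mid 3t+4$ jointly yield $p \mid 1$, impossible. Hence each class has size $2$ or $3$, and the partition shape is either $(2,2,2,2,2)$ or $(2,2,3,3)$. In the $(2,2,2,2,2)$ shape, the $+2$ at $2t+4$ can only pair with the $-2$ at $t+3$, so $\{2t+4,\, t+3\}$ is one of the classes, yielding $p \mid t+1$; but then $2t+4 \equiv 2 \pmod p$, so the residue class also contains the power $2$, contradicting either the size $2$ or the zero class sum.

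In the $(2,2,3,3)$ shape, the same obstruction forbids $\{2t+4,\, t+3\}$ as a size-$2$ class, so $2t+4$ and $t+3$ lie in distinct size-$3$ classes. Coefficient bookkeeping then shows that the $t+3$-class is of the form $\{4t+7,\, t+3,\, 2t+k\}$ for some $k \in \{0,2,3\}$; combining $p \mid 3t+4$ with $p \mid 2t+7-k$ forces $p \mid 4$, $p \mid 7$, or $p \mid 13$ respectively, so only $p = 13$ with $k = 0$ and $t \equiv 3 \pmod{13}$ survives. The $2t+4$-class must then be $\{4t+j,\, 2t+4,\, \epsilon\}$ with $j \in \{4,5\}$ and $\epsilon \in \{0,2\}$, but the resulting congruence $p \mid 2t+j-4$ evaluates to $p \mid 7$ or $p \mid 6$ under $t \equiv 3 \pmod{13}$, contradicting $p = 13$. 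This exhausts all cases. I expect the main obstacle to be the bookkeeping of these finite but intricate residue-class combinations rather than any deep idea, since each case reduces to checking the divisibility of a small integer.
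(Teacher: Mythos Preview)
Your proof is correct, and its opening reduction---taking the remainder modulo $x^p-1$, then using $\deg R < p = 1 + \deg \Phi_p$ together with the term count to force $R \equiv 0$---matches the paper's argument exactly. Where you diverge is in handling the conclusion $R(x) \equiv 0$: the paper simply invokes Claim~\ref{power_list_claim_1}, a computer-verified statement asserting that for every modulus $\beta \ge 6$ some exponent of $Q_t$ has a unique residue class, which immediately contradicts the requirement that every residue class have coefficient sum zero. You instead carry out a direct combinatorial case analysis on the possible partitions of the ten exponents into zero-sum residue classes. Your route is longer but fully self-contained, avoiding any appeal to machine computation; the paper's route is a one-line citation of an auxiliary claim it treats as a black box. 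One minor slip: in your ``respectively'' for $k \in \{0,2,3\}$ the linear combination $3(2t+7-k)-2(3t+4)=13-3k$ gives $p \mid 13$, $p \mid 7$, $p \mid 4$ in that order (not $4,7,13$), though your conclusion that only $k=0$, $p=13$ survives is unaffected.
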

\begin{proof}
    By way of contradiction, suppose that $\Phi_p(x) \mid Q_t(x)$. Then $\Phi_p(x)$ also divides the polynomial
    \begin{align*}
        Q_t^{\bmod p}(x) \coloneqq \,\, & x^{(4t + 7) \bmod p} - x^{(4t + 5) \bmod p} - x^{(4t + 4) \bmod p} + 2x^{(2t + 4) \bmod p}\\
        &+ x^{(2t + 3) \bmod p} + x^{(2t + 2) \bmod p} + x^{2t \bmod p} - 2x^{(t + 3) \bmod p} - x^2 - 1 .
    \end{align*}
    Since $\Phi_p(x) = \sum_{j = 0}^{p - 1} x^j$, it follows that $\deg Q_t^{\bmod p}(x) \le p - 1 = \deg \Phi_p(x)$, hence $Q_t^{\bmod p}(x) \equiv 0$ or there is a $c \in \mathbb{Q} \setminus \{ 0 \}$ such that $Q_t^{\bmod p}(x) = c \, \Phi_p(x)$. In the former case, Claim \ref{power_list_claim_1} yields a contradiction. In the latter case, $Q_t^{\bmod p}(x)$ has exactly $p$ nonzero terms, which is impossible because $p \ge 11$.
\end{proof}

\noindent
We are now in a position to complete the proof of Lemma \ref{qt_lemma}.

\begin{proof}[of Lemma \ref{qt_lemma}]
    By way of contradiction, suppose that $\Phi_b(x) \mid Q_t(x)$ holds for some $t \in \mathbb{N}_0$ and $b \ge 2$. By Claim \ref{radical_claim_1}, we have $\frac{b}{\rad(b)} < 6$. If $b$ has no prime factor below $11$, then we can use Theorem~\ref{filaschi_th} to repeatedly cancel out distinct prime factors of $b$ until exactly one is left. Therefore, $\Phi_p(x) \mid Q_t(x)$ holds for some prime $p \ge 11$, which yields a contradiction due to Claim \ref{prime_claim_1}.

    Now, suppose that $b$ has a prime factor below $11$. In this case, Theorem \ref{filaschi_th} can be used to cancel out all the prime factors of $b$ above seven. Hence, $\Phi_{b'}(x) \mid Q_t(x)$ holds for some $b' \ge 2$ whose prime factors belong to $\{2, 3, 5, 7 \}$ and such that $\frac{b'}{\rad(b')} < 6$. Note that there are finitely many such numbers. Besides, by Theorem \ref{filaschi_th}, we can assume without loss of generality that the distinct prime factors $p_1, p_2, \ldots, p_k$ of $b'$ satisfy $\sum_{j = 1}^k (p_j - 2) \le 8$. Observe that $\Phi_{b'}(x) \mid Q_t(x)$ holds if and only if the polynomial
    \begin{align*}
        Q_t^{\bmod b'}(x) \coloneqq \,\, & x^{(4t + 7) \bmod b'} - x^{(4t + 5) \bmod b'} - x^{(4t + 4) \bmod b'} + 2x^{(2t + 4) \bmod b'}\\
        &+ x^{(2t + 3) \bmod b'} + x^{(2t + 2) \bmod b'} + x^{2t \bmod b'} - 2x^{(t + 3) \bmod b'} - x^2 - 1
    \end{align*}
    is divisible by $\Phi_{b'}(x)$. With this in mind, we can obtain a contradiction by going through all the feasible numbers $b'$ and then verifying that $\Phi_{b'}(x) \nmid Q_t^{\bmod b'}(x)$ holds for each $t \in \{ 0, 1, 2, \ldots, b' - 1\}$. This can be done, e.g., via a \texttt{SageMath} script, as shown in \cite{GitHub}.
\end{proof}

\subsection{\texorpdfstring{$R_t(x)$}{Rₜ(x)} polynomials}

Here, we focus on proving the next lemma.

\begin{lemma}\label{rt_lemma}
    For any $t \in \mathbb{N}_0$, we have $\Phi_b(x) \nmid R_t(x)$ for each $b \ge 3$.
\end{lemma}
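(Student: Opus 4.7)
The plan is to mimic, step by step, the argument used for Lemma \ref{qt_lemma}, keeping all four moving parts (unique-remainder claim, radical bound, prime-case claim, and Filaseta--Schinzel wrap-up) in place while adjusting the numerical constants. Observe that $R_t(x)$ has $N = 20$ nonzero terms with coefficients in $\{\pm 1, \pm 2\}$ and exponent list
\[
    L = \{8t+15,\, 8t+14,\, 8t+11,\, 8t+10,\, 8t+8,\, 6t+9,\, 4t+15,\, 4t+11,\, 4t+9,\, 4t+8,\, 4t+7,\, 4t+6,\, 4t+4,\, 4t,\, 2t+6,\, 7,\, 5,\, 4,\, 1,\, 0\},
\]
and that $R_t(-1) = 0$ for every $t \in \mathbb{N}_0$: this is precisely why the statement excludes $b = 2$, and so the final finite case check simply skips that value.

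First, I would establish an analogue of Claim \ref{power_list_claim_1}: there is an explicit threshold $B$ (to be found by a short computer search of the kind recorded in \cite{GitHub}) such that, for every $\beta \ge B$ and every $t \in \mathbb{N}_0$, at least one element of $L$ has a unique remainder modulo $\beta$. From this, the analogue of Claim \ref{radical_claim_1} is immediate: assuming $\Phi_b(x) \mid R_t(x)$, Corollary \ref{cyclotomic_reduction_rad} gives that the nonzero terms of $\Phi_b(x)$ all have powers divisible by $b/\rad(b)$, and Lemma \ref{polynomial_division_lemma} together with the unique-remainder claim then forces $\Phi_b(x)$ to divide a single nonzero monomial (whose coefficient lies in $\{\pm 1, \pm 2\}$), which is impossible. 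Hence $b/\rad(b) < B$.

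Next, I would prove the analogue of Claim \ref{prime_claim_1}: for every prime $p \ge \max(B,\, 21)$ one has $\Phi_p(x) \nmid R_t(x)$. Exactly as in the $Q_t$ argument, the reduction $R_t^{\bmod p}(x)$ --- obtained by replacing each exponent of $R_t(x)$ by its remainder modulo $p$ --- has degree at most $p - 1 = \deg \Phi_p(x)$, so if it were divisible by $\Phi_p(x)$ then either it would vanish identically (ruled out by the unique-remainder claim, since a lone coefficient in $\{\pm 1, \pm 2\}$ cannot cancel) or it would equal $c\,\Phi_p(x)$ for some $c \ne 0$, forcing it to contain $p \ge 21$ nonzero terms, while it can have at most $N = 20$.

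Finally, I would close the argument exactly as in the last paragraph of the proof of Lemma \ref{qt_lemma}. Since $N - 2 = 18$, any single prime $p \ge 23$ satisfies the hypothesis of Theorem \ref{filaschi_th} on its own, so its full $p$-part can be stripped off from $b$ one prime at a time. If every prime factor of $b$ exceeds $19$, this procedure terminates at a single prime, contradicting the prime claim. Otherwise, we arrive at some $b'$ whose prime factors lie in $\{2, 3, 5, 7, 11, 13, 17, 19\}$ and which, by further application of Theorem \ref{filaschi_th}, can be assumed to satisfy $\sum_j (p_j - 2) \le 18$. Combined with $b'/\rad(b') < B$ and $b' \ge 3$, this leaves a finite and explicit list of pairs $(b', t)$ with $t \in \{0, 1, \ldots, b' - 1\}$; each is settled by checking $\Phi_{b'}(x) \nmid R_t^{\bmod b'}(x)$ via a \texttt{SageMath} script as in \cite{GitHub}. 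The main obstacle here is not conceptual but computational: $R_t$ has roughly twice as many nonzero terms as $Q_t$, so the Filaseta--Schinzel step can no longer strip a prime singly until $p \ge 23$, the threshold $B$ is larger, and the residual case enumeration at the end grows accordingly; additional care is required so that the computer check does not trip on the trivial divisor $\Phi_2(x)$, which is dealt with simply by restricting to $b' \ge 3$.
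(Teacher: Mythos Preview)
Your outline has a genuine gap, and it is exactly the issue you try to wave away in your last sentence. Because $\Phi_2(x) \mid R_t(x)$ for every $t$, the Filaseta--Schinzel reduction can land on $b' = 2$ without producing a contradiction, and ``restricting to $b' \ge 3$'' in the computer check does not repair this: it simply leaves those original $b$ unhandled. Concretely, take $b = 2p$ with $p \ge 23$ prime. Your Case~1 does not apply (since $2 \mid b$), and in your Case~2 the only prime you can strip via Theorem~\ref{filaschi_th} is $p$, which drops you to $b' = 2$; applying the theorem instead with the set $\{2, p\}$ only tells you that \emph{one} of $\Phi_{2}$, $\Phi_{p}$ divides $R_t$, and the first disjunct is vacuously true. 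So nothing in your scheme rules out $\Phi_{2p}(x) \mid R_t(x)$ for large primes $p$, and there are infinitely many such $b$.

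The paper closes this hole with two extra ingredients that you did not anticipate. First, it proves that $2^2 \nmid b$ (Claim~\ref{four_free_claim_2}) by observing that the even-exponent part of $R_t(x)$ is literally $Q_t(x^2)$; Lemma~\ref{polynomial_division_lemma} then feeds this back into Lemma~\ref{qt_lemma}, which is why $Q_t$ was analysed first. Second, it upgrades the prime claim to cover $\Phi_{2p}$ as well as $\Phi_p$ for $p \ge 23$ (Claim~\ref{prime_claim_2}), using that $\Phi_{2p}(x) = \Phi_p(-x)$ and a signed modular reduction. With these in hand the case split becomes ``does $b$ have an \emph{odd} prime factor $\le 19$?'': if not, one strips large primes down to $b' \in \{p, 2p\}$ (the $4 \nmid b$ claim and the radical bound force the exponent of $2$ to be $0$ or $1$ and that of $p$ to be $1$), and Claim~\ref{prime_claim_2} finishes; if so, the reduced $b'$ is automatically $\ge 3$ and the finite \texttt{SageMath} check goes through. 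Your argument is missing both pieces, and without them the proof does not close.
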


\noindent
The following result can be proved, e.g., by using a \texttt{Python} script, as shown in \cite{GitHub}.

\begin{claim}\label{power_list_claim_2}
    For each $\beta \ge 11$, there exists an element of the sequence
    \begin{align*}
        8t + 15, 8t &+ 14, 8t + 11, 8t + 10, 8t + 8, 6t + 9, 4t + 15,\\
        4t &+ 11, 4t + 9, 4t + 8, 4t + 7, 4t + 6, 4t + 4, 4t, 2t + 6, 7, 5, 4, 1, 0
    \end{align*}
    with a unique remainder modulo $\beta$.
\end{claim}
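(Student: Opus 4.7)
My plan is to proceed exactly as in the proof of Claim \ref{power_list_claim_1}: combine a structural reduction to finitely many cases with an exhaustive verification by computer. For each fixed $\beta$, the residues of all twenty expressions modulo $\beta$ depend only on $t \bmod \beta$, so only the finitely many pairs $(\beta, t)$ with $t \in \{0, 1, \ldots, \beta - 1\}$ need to be examined once $\beta$ itself is bounded. Hence the only genuinely infinite direction is $\beta$, and the plan is to handle sufficiently large $\beta$ by a uniform structural argument and the remainder by direct enumeration.

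For the large-$\beta$ step I would group the twenty elements by their coefficient of $t$: five have coefficient $8$, eight have coefficient $4$, one each has coefficients $6$ and $2$, and the remaining five are constants. Within each such group the pairwise differences of the constant terms are at most $15$ (achieved inside the coefficient-$4$ group), so for $\beta > 15$ no two elements of the same group can share a residue modulo $\beta$. Any collision therefore occurs between elements from different groups, and each such coincidence pins $t$ to a single residue class modulo an appropriate divisor of $\beta$. In particular, for each of the five constant members $0, 1, 4, 5, 7$, the assertion ``this element is not unique'' forces $t$ into one of a bounded list of residue classes modulo $\beta$, and the structural goal is to show that for $\beta$ larger than some explicit threshold $B$ these five constraints cannot all be satisfied simultaneously, so at least one constant is forced to have a unique residue.

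With such a bound $B$ secured, the residual cases $11 \le \beta \le B$ and $0 \le t < \beta$ form a finite set that is verified directly by a \texttt{Python} script, in the exact spirit of what is done for Claim \ref{power_list_claim_1} in \cite{GitHub}. The script simply iterates over all such pairs, computes the twenty residues modulo $\beta$, and checks that at least one appears exactly once.

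The main obstacle I anticipate is the bookkeeping in the structural step. With twenty expressions spread across four distinct coefficient classes, the enumeration of possible collision patterns between classes is significantly more involved than for Claim \ref{power_list_claim_1}, which involved only ten expressions and two coefficient classes. The cleanest practical route is probably the one the authors take: push the computational threshold $B$ high enough that the leftover structural argument for $\beta > B$ becomes essentially trivial (for example, by taking $B$ larger than the largest period appearing in any of the simultaneous congruence systems that can arise), so that the entire proof reduces to a verified certificate produced by the \texttt{Python} script.
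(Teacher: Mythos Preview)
Your proposal is correct and matches the paper's approach: the paper's entire proof of this claim is a reference to a \texttt{Python} script in \cite{GitHub}, and the plan you outline --- bound $\beta$ by a structural argument on the five constant terms and then enumerate the finitely many remaining pairs $(\beta, t)$ with $11 \le \beta \le B$ and $0 \le t < \beta$ --- is exactly the reduction such a script must implement. One small imprecision worth flagging: the threshold $B$ does not come from ``the largest period appearing in the congruence systems'' (those periods are $\beta/\gcd(d,\beta)$ and grow with $\beta$, so they cannot furnish a $\beta$-independent bound), but rather from the cross-terms $d_i c_j - d_j c_i$ obtained by eliminating $t$ between any two collision constraints $d_i t \equiv c_i$ and $d_j t \equiv c_j \pmod{\beta}$; since here $|d_i| \le 8$ and $|c_i| \le 15$, these cross-terms are uniformly bounded, and for $\beta$ exceeding that bound any simultaneous collision pattern on the five constants would force all ratios $c_i/d_i$ to coincide, which a short finite check over the available $(a,b)$ pairs rules out.
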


\noindent
The next claim can now be proved analogously to Claim \ref{radical_claim_1}.

\begin{claim}\label{radical_claim_2}
    Suppose that for some $t \in \mathbb{N}_0$ and $b \in \mathbb{N}$, we have $\Phi_b(x) \mid R_t(x)$. Then $\frac{b}{\rad(b)} < 11$.
\end{claim}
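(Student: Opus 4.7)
The plan is to mimic the argument already used for Claim \ref{radical_claim_1}, replacing the auxiliary polynomial $Q_t(x)$ by $R_t(x)$ and the threshold $6$ by $11$, which matches the new hypothesis coming from Claim \ref{power_list_claim_2}. Concretely, I would argue by contradiction: assume there exist $t \in \mathbb{N}_0$ and $b \in \mathbb{N}$ with $\Phi_b(x) \mid R_t(x)$ and $\frac{b}{\rad(b)} \ge 11$, and aim to derive that $\Phi_b(x)$ must divide a polynomial with a single nonzero monomial, which is impossible whenever $b \ge 2$ (and the case $b = 1$ is vacuous since $\rad(1) = 1$).

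First, I would invoke Corollary \ref{cyclotomic_reduction_rad} to write $\Phi_b(x) = \Phi_{\rad(b)}(x^{b/\rad(b)})$. Setting $\beta \coloneqq \frac{b}{\rad(b)}$, this shows that every nonzero term of $\Phi_b(x)$ has exponent divisible by $\beta$, and by hypothesis $\beta \ge 11$. Next, I would apply Lemma \ref{polynomial_division_lemma} with $V(x) = R_t(x)$ and $W(x) = \Phi_b(x)$: for every $j \in \{0, 1, \ldots, \beta - 1\}$, the polynomial $R_t^{(\beta, j)}(x)$ consisting of those terms of $R_t(x)$ whose exponents are congruent to $j$ modulo $\beta$ must also be divisible by $\Phi_b(x)$.

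Now I would use Claim \ref{power_list_claim_2}: since $\beta \ge 11$, at least one element of the exponent sequence
\[
8t + 15,\, 8t + 14,\, 8t + 11,\, 8t + 10,\, 8t + 8,\, 6t + 9,\, 4t + 15,\, 4t + 11,\, 4t + 9,\, 4t + 8,
\]
\[
4t + 7,\, 4t + 6,\, 4t + 4,\, 4t,\, 2t + 6,\, 7,\, 5,\, 4,\, 1,\, 0
\]
has a unique residue modulo $\beta$. This sequence is exactly the list of exponents appearing in $R_t(x)$, and every corresponding coefficient in $R_t(x)$ is nonzero (the coefficients are $\pm 1$ or $\pm 2$, with no two exponents in the list ever coinciding as polynomials in $t$ in a way that could cause cancellation). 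Hence, for the residue $j$ given by that unique exponent $\alpha$, the polynomial $R_t^{(\beta, j)}(x)$ reduces to a single monomial $c x^{\alpha}$ with $c \in \mathbb{Z} \setminus \{0\}$.

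The contradiction then follows immediately: $\Phi_b(x)$ has degree $\varphi(b) \ge 1$ whenever $b \ge 2$, so it cannot divide the nonzero monomial $c x^{\alpha}$. I do not expect any genuine obstacle in this argument; the only minor point worth being careful about is confirming that none of the listed exponents of $R_t(x)$ coincide for $t \in \mathbb{N}_0$ in a manner that would cause the relevant coefficient to vanish, but a direct inspection of the twenty monomials shows all pairwise differences are nonzero as polynomials in $t$ (or else differ in their constant term), so each coefficient is indeed $\pm 1$ or $\pm 2$ as displayed in the definition of $R_t(x)$.
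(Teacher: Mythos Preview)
Your proof is correct and follows exactly the approach the paper indicates (the paper simply states that Claim~\ref{radical_claim_2} is proved analogously to Claim~\ref{radical_claim_1}). Your closing worry about coinciding exponents is unnecessary and its justification is slightly off: the point is that if an exponent in the sequence has a \emph{unique} residue modulo $\beta$, then in particular it is not equal to any other exponent in the list, so its coefficient in $R_t^{(\beta,j)}(x)$ is automatically the single listed nonzero value.
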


\noindent
We move to the following two auxiliary claims.

\begin{claim}\label{four_free_claim_2}
    Suppose that for some $t \in \mathbb{N}_0$ and $b \in \mathbb{N}$, we have $\Phi_b(x) \mid R_t(x)$. Then $2^2 \nmid b$.
\end{claim}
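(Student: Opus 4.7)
My plan is to argue by contradiction, reducing the claim for $R_t$ to the already-established Lemma~\ref{qt_lemma} for $Q_t$. Assume that $\Phi_b(x) \mid R_t(x)$ with $4 \mid b$. Since $\rad(b)$ is square-free, $b/\rad(b)$ is then even, so by Corollary~\ref{cyclotomic_reduction_rad} every nonzero term of $\Phi_b(x)$ has an even exponent. Applying Lemma~\ref{polynomial_division_lemma} with $\beta = 2$ and $j = 0$ yields $\Phi_b(x) \mid R_t^{(2,0)}(x)$, where $R_t^{(2,0)}(x)$ is the sub-polynomial formed by the even-exponent terms of $R_t(x)$.

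The key structural observation, which I would verify by a direct term-by-term check against the definition of $R_t(x)$, is that
\begin{align*}
R_t^{(2,0)}(x) = \, & x^{8t+14} - x^{8t+10} - x^{8t+8} + 2 x^{4t+8} + x^{4t+6} + x^{4t+4}\\
& + x^{4t} - 2 x^{2t+6} - x^4 - 1 = Q_t(x^2).
\end{align*}
This identity is presumably the very reason the polynomial $R_t(x)$ is assembled with precisely these exponents; the coefficients and offsets were chosen so that the even slice is exactly a dilation of $Q_t$.

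Once this identification is in hand, the rest is routine. Let $\zeta$ be any primitive $b$-th root of unity. Then $\zeta$ is a root of $\Phi_b(x)$, hence of $R_t^{(2,0)}(x) = Q_t(x^2)$, so $Q_t(\zeta^2) = 0$. Since $2 \mid b$, the element $\zeta^2$ is a primitive $(b/2)$-th root of unity, and by irreducibility of $\Phi_{b/2}(x)$ over $\mathbb{Q}$ we conclude $\Phi_{b/2}(x) \mid Q_t(x)$. As $4 \mid b$ forces $b/2 \ge 2$, this contradicts Lemma~\ref{qt_lemma}, completing the proof.

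The only nontrivial step is the term-sifting identity $R_t^{(2,0)}(x) = Q_t(x^2)$; everything else is a standard parity-cancellation argument combined with the well-known fact that squaring a primitive $b$-th root of unity yields a primitive $(b/2)$-th root whenever $b$ is even. With only ten nonzero even-exponent terms to line up, I do not anticipate any real obstacle in carrying out this verification.
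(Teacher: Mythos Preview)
Your proposal is correct and follows essentially the same argument as the paper's own proof: assume $4\mid b$, observe that $\Phi_b(x)$ has only even-exponent terms, apply Lemma~\ref{polynomial_division_lemma} to isolate the even part of $R_t(x)$, recognize this as $Q_t(x^2)$, and derive a contradiction with Lemma~\ref{qt_lemma} via the primitive $(b/2)$-th root $\zeta^2$. The only cosmetic difference is that you invoke Corollary~\ref{cyclotomic_reduction_rad} (noting $b/\rad(b)$ is even) where the paper cites Lemma~\ref{cyclotomic_reduction_prime} directly; both yield the same conclusion.
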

\begin{proof}
    By way of contradiction, suppose that $2^2 \mid b$. By Lemma \ref{cyclotomic_reduction_prime}, the powers of all the nonzero terms of $\Phi_b(x)$ are even. From Lemma \ref{polynomial_division_lemma}, we conclude that the polynomial
    \[
        x^{8t + 14} - x^{8t + 10} - x^{8t + 8} + 2x^{4t + 8} + x^{4t + 6} + x^{4t + 4} + x^{4t} - 2x^{2t + 6} - x^4 - 1
    \]
    has a root that is a primitive $b$-th root of unity. Therefore, $Q_t(x)$ has a primitive $\frac{b}{2}$-th root of unity among its roots, which yields a contradiction due to Lemma \ref{qt_lemma}.
\end{proof}

\begin{claim}\label{prime_claim_2}
    For any $t \in \mathbb{N}_0$ and prime $p \ge 23$, we have $\Phi_p(x) \nmid R_t(x)$ and $\Phi_{2p}(x) \nmid R_t(x)$.
\end{claim}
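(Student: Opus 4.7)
The claim has two parts, and I would handle both by the same pattern as in Claim~\ref{prime_claim_1}, exploiting the fact that $R_t(x)$ has only $20$ nonzero terms, all with coefficients in $\{-2,-1,1,2\}$, while the hypothesis $p \ge 23$ gives the slack $p > 20$ that the counting argument needs.

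For the first part, I would assume $\Phi_p(x) \mid R_t(x)$ and introduce the polynomial $R_t^{\bmod p}(x)$ obtained from $R_t(x)$ by replacing each exponent $a$ with $a \bmod p$. Since $\Phi_p(x) \mid x^p - 1$, divisibility is preserved, and because $\deg R_t^{\bmod p}(x) \le p - 1 = \deg \Phi_p(x)$, there are only two options: either $R_t^{\bmod p}(x) \equiv 0$, or $R_t^{\bmod p}(x) = c\,\Phi_p(x)$ for some nonzero $c \in \mathbb{Q}$. The latter case would force $R_t^{\bmod p}(x)$ to have exactly $p \ge 23$ nonzero terms, which is impossible since it has at most $20$. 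The former case is ruled out by Claim~\ref{power_list_claim_2} applied with $\beta = p \ge 11$: one of the $20$ exponents of $R_t(x)$ has a unique residue mod $p$, so the corresponding monomial survives the reduction with its original nonzero coefficient.

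For the second part, I would invoke the standard identity $\Phi_{2p}(x) = \Phi_p(-x)$ valid for every odd prime $p$ (whose one-line justification is that the primitive $2p$-th roots of unity are precisely $-\omega$ as $\omega$ ranges over the primitive $p$-th roots of unity). Thus $\Phi_{2p}(x) \mid R_t(x)$ is equivalent to $\Phi_p(x) \mid R_t(-x)$, and since $R_t(-x)$ has exactly the same $20$ exponents as $R_t(x)$, with coefficients obtained by flipping signs on the odd-degree terms (so still inside $\{-2,-1,1,2\}$), the argument of the first part applies verbatim to $R_t(-x)$ in place of $R_t(x)$.

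I do not anticipate any real obstacle in this step: the bound $p \ge 23$ was evidently engineered so that both $p > 20$ (the term count of $R_t$) and $p \ge 11$ (the threshold in Claim~\ref{power_list_claim_2}) are automatic, and the reduction via $\Phi_{2p}(x) = \Phi_p(-x)$ is the only conceptually new ingredient relative to Claim~\ref{prime_claim_1}.
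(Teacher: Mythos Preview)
Your proposal is correct and follows essentially the same approach as the paper: the $\Phi_p$ case is handled exactly as in Claim~\ref{prime_claim_1}, and for the $\Phi_{2p}$ case the paper reduces exponents modulo $p$ with explicit sign factors $(-1)^{\lfloor a/p\rfloor}$ coming from $\zeta^p=-1$, which is precisely what your substitution $\Phi_{2p}(x)=\Phi_p(-x)$ accomplishes in one stroke. Your packaging via $R_t(-x)$ is slightly cleaner than the paper's explicit sign-tracking, but the underlying counting argument (at most $20$ terms versus $p\ge 23$, and Claim~\ref{power_list_claim_2} to rule out the zero polynomial) is identical.
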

\begin{proof}
    It can be proved analogously to Claim \ref{prime_claim_1} that $\Phi_p(x) \nmid R_t(x)$. Now, by way of contradiction, suppose that $\Phi_{2p}(x) \mid R_t(x)$. Then $\Phi_{2p}(x)$ divides the polynomial
    \begin{align*}
        R_t^{\bmod 2p}(x) \coloneqq \,\, & (-1)^{\lfloor \frac{8t+15}{p} \rfloor} x^{(8t + 15) \bmod p} + (-1)^{\lfloor \frac{8t+14}{p} \rfloor} x^{(8t + 14) \bmod p} + (-1)^{\lfloor \frac{8t+11}{p} \rfloor} x^{(8t + 11) \bmod p}\\
        &- (-1)^{\lfloor \frac{8t+10}{p} \rfloor} x^{(8t + 10) \bmod p} - (-1)^{\lfloor \frac{8t+8}{p} \rfloor} x^{(8t + 8) \bmod p} + 2 (-1)^{\lfloor \frac{6t+9}{p} \rfloor} x^{(6t + 9) \bmod p}\\
        &- (-1)^{\lfloor \frac{4t+15}{p} \rfloor} x^{(4t + 15) \bmod p} - (-1)^{\lfloor \frac{4t+11}{p} \rfloor} x^{(4t + 11) \bmod p} - (-1)^{\lfloor \frac{4t+9}{p} \rfloor} x^{(4t + 9) \bmod p}\\
        &+ 2 (-1)^{\lfloor \frac{4t+8}{p} \rfloor} x^{(4t + 8) \bmod p} - 2 (-1)^{\lfloor \frac{4t+7}{p} \rfloor} x^{(4t + 7) \bmod p} + (-1)^{\lfloor \frac{4t+6}{p} \rfloor} x^{(4t + 6) \bmod p}\\
        &+ (-1)^{\lfloor \frac{4t+4}{p} \rfloor} x^{(4t + 4) \bmod p} + (-1)^{\lfloor \frac{4t}{p} \rfloor} x^{4t \bmod p} - 2 (-1)^{\lfloor \frac{2t+6}{p} \rfloor} x^{(2t + 6) \bmod p}\\
        &+ x^7 + x^5 - x^4 - x - 1 .
    \end{align*}
    Since $\Phi_{2p}(x) = \sum_{j = 0}^{p - 1} (-x)^j$, we have $\deg R_t^{\bmod 2p}(x) \le p - 1 = \deg \Phi_{2p}(x)$, which means that $R_t^{\bmod 2p}(x) \equiv 0$ or there is a $c \in \mathbb{Q} \setminus \{ 0 \}$ such that $R_t^{\bmod 2p}(x) = c \, \Phi_{2p}(x)$. In the former case, Claim~\ref{power_list_claim_2} yields a contradiction, while in the latter case, $R_t^{\bmod 2p}(x)$ has exactly $p$ nonzero terms, which is not possible since $p \ge 23$.
\end{proof}

\noindent
The proof of Lemma \ref{rt_lemma} can now be finalized.

\begin{proof}[of Lemma \ref{rt_lemma}]
    By way of contradiction, suppose that $\Phi_b(x) \mid R_t(x)$ holds for some $t \in \mathbb{N}_0$ and $b \ge 3$. Claims \ref{radical_claim_2} and \ref{four_free_claim_2} imply that $\frac{b}{\rad(b)} < 11$ and $2^2 \nmid b$. If $b$ has no prime factor from $\{3, 5, 7, 11, 13, 17, 19 \}$, then Theorem \ref{filaschi_th} can be used to repeatedly cancel out distinct prime factors of $b$ that are above $19$ until only one such divisor is left. Therefore, $\Phi_p(x) \mid R_t(x)$ or $\Phi_{2p}(x) \mid R_t(x)$ holds for some prime $p \ge 23$, which is impossible due to Claim \ref{prime_claim_2}.

    Now, suppose that $b$ has a prime factor from $\{3, 5, 7, 11, 13, 17, 19 \}$. In this case, Theorem \ref{filaschi_th} can be applied to cancel out all the prime factors of $b$ above $19$, which implies that $\Phi_{b'}(x) \mid R_t(x)$ holds for some $b' \ge 3$ whose prime factors are at most $19$ and such that $\frac{b'}{\rad(b')} < 11$ and $2^2 \nmid b'$. Also, by Theorem~\ref{filaschi_th}, we can assume without loss of generality that the distinct prime factors $p_1, p_2, \ldots, p_k$ of $b'$ satisfy $\sum_{j = 1}^k (p_j - 2) \le 18$. The rest of the proof can be carried out via computer analogously to Lemma~\ref{qt_lemma}; see \cite{GitHub}.
\end{proof}

\subsection{\texorpdfstring{$S_t(x)$}{Sₜ(x)} polynomials}

In the present subsection we study the divisibility of the $S_t(x)$ polynomials by cyclotomic polynomials and obtain the next result.

\begin{lemma}\label{st_lemma}
    For any $t \in \mathbb{N}_0$, we have $\Phi_b(x) \nmid S_t(x)$ for each $b \ge 2$.
\end{lemma}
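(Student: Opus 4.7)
The plan is to follow the template established by the proofs of Lemmas \ref{qt_lemma} and \ref{rt_lemma}. First I would establish a power list claim analogous to Claims \ref{power_list_claim_1} and \ref{power_list_claim_2}: for every $\beta$ above some explicit threshold $N_S$, at least one of the nineteen exponents appearing in $S_t(x)$, namely those in the list $\{4t+13, 4t+11, 4t+10, 4t+9, 4t+8, 2t+13, 2t+10, 2t+9, 2t+7, 2t+5, 2t+4, 2t+3, 2t+2, 2t+1, t+6, 6, 5, 1, 0\}$, has a residue modulo $\beta$ attained by no other exponent in the list. This is a finite check that can be automated; based on the analogous thresholds $6$ and $11$ used for $Q_t$ and $R_t$, I expect $N_S$ to sit somewhere in the range $12$--$16$.

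Next I would deduce a radical claim: if $\Phi_b(x) \mid S_t(x)$ for some $t$ and some $b \ge 2$, then $b/\rad(b) < N_S$. The argument is word-for-word the one used in Claim \ref{radical_claim_1}: if $b/\rad(b) \ge N_S$, then by Corollary \ref{cyclotomic_reduction_rad} every nonzero exponent of $\Phi_b(x)$ is divisible by $b/\rad(b)$, so Lemma \ref{polynomial_division_lemma} forces $\Phi_b(x)$ to divide a single monomial, which is absurd.

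I would then establish a prime claim: for every prime $p \ge 23$, $\Phi_p(x) \nmid S_t(x)$. Since $S_t(x)$ has only nineteen nonzero terms, reducing its exponents modulo $p$ yields a polynomial $S_t^{\bmod p}(x)$ of degree at most $p-1$ with at most nineteen nonzero terms; comparing with $\Phi_p(x) = \sum_{j=0}^{p-1} x^j$ shows that $S_t^{\bmod p}(x)$ must vanish identically whenever $p \ge 23 > 19$, which then contradicts the power list claim. Depending on what the final step below actually needs, I may similarly show $\Phi_{2p}(x) \nmid S_t(x)$ for $p \ge 23$ via the ``$\bmod\,2p$'' reduction used in Claim \ref{prime_claim_2}.

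Finally, I would combine these ingredients through Theorem \ref{filaschi_th}. Any prime factor of $b$ exceeding $19$ can be cancelled one at a time via Filaseta--Schinzel, so after cancellation we may assume the distinct prime factors $p_1,\ldots,p_k$ of the remaining $b'$ satisfy $\sum_j (p_j-2) \le 17$ together with $b'/\rad(b') < N_S$, leaving only finitely many candidate values of $b'$. Each such $b'$ is ruled out by verifying, via a \texttt{SageMath} script analogous to the ones used for $Q_t$ and $R_t$, that $\Phi_{b'}(x) \nmid S_t^{\bmod b'}(x)$ for every $t \in \{0,1,\ldots,b'-1\}$. The main obstacle is purely computational: the list of $b'$ values to enumerate is larger than in the $Q_t$ case because $S_t$ has nearly twice as many nonzero terms. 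Before committing to brute force I would briefly inspect whether the even-power part of $S_t(x)$ admits a clean reduction to $Q_t(x)$ in the spirit of Claim \ref{four_free_claim_2}, which would allow bypassing the $4 \mid b$ cases; a quick look at the exponent parities of $S_t(x)$ suggests the reduction is not as tidy as the $R_t \to Q_t$ one, so direct enumeration is likely the route.
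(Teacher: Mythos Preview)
Your proposal is correct and matches the paper's approach essentially step for step: power-list claim, radical bound, prime claim for $p \ge 23$, then Filaseta--Schinzel reduction to a finite \texttt{SageMath} check. The only discrepancies are that the paper's power-list threshold is $N_S = 8$ (Claim~\ref{power_list_claim_3}), smaller than your estimate of $12$--$16$, and the paper uses neither the $\Phi_{2p}$ case nor any even-power reduction for $S_t$.
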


\noindent
By analogy, we start with the following claim that can be proved via a computer-assisted approach; see \cite{GitHub}.

\begin{claim}\label{power_list_claim_3}
    For each $\beta \ge 8$, there exists an element of the sequence
    \begin{align*}
        4t + 13, 4t &+ 11, 4t + 10, 4t + 9, 4t + 8, 2t + 13, 2t + 10,\\
        2t &+ 9, 2t + 7, 2t + 5, 2t + 4, 2t + 3, 2t + 2, 2t + 1, t + 6, 6, 5, 1, 0
    \end{align*}
    with a unique remainder modulo $\beta$.
\end{claim}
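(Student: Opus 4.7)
My plan is to follow the same two-step strategy used for Claims~\ref{power_list_claim_1} and~\ref{power_list_claim_2}. Every entry of the sequence has the form $a_i t + k_i$ with $a_i \in \{0, 1, 2, 4\}$ and $k_i \in \mathbb{Z}$, so its residue modulo~$\beta$ is determined by $t \bmod \beta$; hence, for each fixed $\beta \geq 8$, it suffices to verify the conclusion for $t \in \{0, 1, \ldots, \beta - 1\}$.

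The theoretical step handles the ``generic'' regime $\beta > 4t + 13$. In this range, any two exponents $e_i, e_j$ satisfy $|e_i - e_j| \leq 4t + 13 < \beta$, so congruence modulo~$\beta$ coincides with equality as integers. For $t \geq 6$, the four natural groupings $\{4t + 8, 4t + 9, 4t + 10, 4t + 11, 4t + 13\}$, $\{2t + 1, 2t + 2, 2t + 3, 2t + 4, 2t + 5, 2t + 7, 2t + 9, 2t + 10, 2t + 13\}$, $\{t + 6\}$, and $\{0, 1, 5, 6\}$ sit in pairwise disjoint integer intervals, and within each grouping the values are distinct. Consequently, all~$19$ residues are pairwise distinct and every element trivially has a unique residue. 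For $t \in \{0, 1, 2, 3, 4, 5\}$ the same integer-distinctness argument still works once $\beta$ exceeds the largest exponent (at most~$33$), and one checks by inspection that for each of these six small values of $t$ at least one integer exponent in the sequence appears exactly once.

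This leaves only the residual regime $\beta \leq 4t + 13$, which by the periodicity $t \mapsto t \bmod \beta$ collapses to a family of $(\beta, t)$ pairs amenable to direct enumeration. The verification is then performed by a \texttt{Python}/\texttt{SageMath} script in~\cite{GitHub}, in direct analogy with Claims~\ref{power_list_claim_1} and~\ref{power_list_claim_2}. The main obstacle --- really a bookkeeping nuisance rather than a conceptual one --- is this residual region, where $\beta$ and $t$ are of comparable magnitude and one cannot fall back on integer distinctness; the delicate point is confirming that the theoretical reduction together with the periodicity genuinely restricts the computer search to a finite number of cases that the script can exhaust.
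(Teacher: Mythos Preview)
Your theoretical step for the regime $\beta > 4t+13$ is fine: in that range the nineteen exponents are genuine integers in $[0,4t+13]$, congruence modulo $\beta$ reduces to equality, and your disjoint-interval argument for $t\ge 6$ (together with the six hand checks for $t\le 5$) goes through.

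The gap is in the ``residual regime''. After the periodicity reduction $t\mapsto t\bmod\beta$, the set of leftover pairs is
\[
\bigl\{(\beta,t):\ \beta\ge 8,\ \lceil(\beta-13)/4\rceil\le t\le \beta-1\bigr\},
\]
and this set is \emph{infinite}: for every $\beta\ge 8$ it contributes roughly $3\beta/4$ values of $t$, so no script can exhaust it by direct enumeration. In other words, your split $\beta>4t+13$ versus $\beta\le 4t+13$ is a split that depends on $t$, not an absolute bound on $\beta$; it does not reduce the problem to finitely many cases. You correctly flag this as ``the delicate point'', but you do not actually resolve it --- you simply assert that the script handles it. What is missing is an argument valid for all sufficiently large $\beta$ \emph{uniformly in $t$} (for instance, one that forces at least one of the constant exponents $0,1,5,6$ or the lone coefficient-one exponent $t+6$ to be isolated once $\beta$ exceeds some absolute threshold), after which only a bounded range of $\beta$ remains and the enumeration over $t\in\{0,\ldots,\beta-1\}$ is genuinely finite.

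The paper itself gives no more than ``see the script in \cite{GitHub}'', so there is no explicit argument to compare against; but whatever the script does, it cannot be literally the enumeration you describe, because that enumeration does not terminate.
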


\noindent
The next two results can be proved analogously to Claims \ref{radical_claim_1} and \ref{prime_claim_1}, respectively.

\begin{claim}\label{radical_claim_3}
    Suppose that for some $t \in \mathbb{N}_0$ and $b \in \mathbb{N}$, we have $\Phi_b(x) \mid S_t(x)$. Then $\frac{b}{\rad(b)} < 8$.
\end{claim}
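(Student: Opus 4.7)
The plan is to mimic almost verbatim the argument used for Claim~\ref{radical_claim_1}, adjusting only the constants and the power list. I would proceed by contradiction: assume that $\Phi_b(x) \mid S_t(x)$ for some $t \in \mathbb{N}_0$ and $b \in \mathbb{N}$ with $\beta := \frac{b}{\rad(b)} \ge 8$.

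First I would invoke Corollary~\ref{cyclotomic_reduction_rad} to write $\Phi_b(x) = \Phi_{\rad(b)}(x^{\beta})$, which shows that every nonzero term of $\Phi_b(x)$ has exponent divisible by $\beta$. Then I would apply Lemma~\ref{polynomial_division_lemma} with $W(x) = \Phi_b(x)$ and $V(x) = S_t(x)$, which yields $\Phi_b(x) \mid S_t^{(\beta, j)}(x)$ for each $j \in \{0,1,\ldots,\beta - 1\}$, where $S_t^{(\beta, j)}(x)$ is the sub-polynomial of $S_t(x)$ formed by the terms whose exponent is congruent to $j$ modulo $\beta$.

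Now comes the key step: because $\beta \ge 8$, Claim~\ref{power_list_claim_3} guarantees that at least one element of the exponent sequence
\[
    4t+13,\, 4t+11,\, 4t+10,\, 4t+9,\, 4t+8,\, 2t+13,\, 2t+10,\, 2t+9,\, 2t+7,\, 2t+5,\, 2t+4,\, 2t+3,\, 2t+2,\, 2t+1,\, t+6,\, 6,\, 5,\, 1,\, 0
\]
has a remainder modulo $\beta$ that is not shared by any other element. The corresponding $S_t^{(\beta, j)}(x)$ therefore consists of a single monomial $c\,x^\alpha$ with $c \in \mathbb{Z} \setminus \{0\}$ and $\alpha \in \mathbb{N}_0$ (the coefficients $\pm 1, \pm 2, \pm 3$ appearing in $S_t(x)$ are all nonzero). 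But $\Phi_b(x)$ with $b \ge 2$ has no zero root, so it cannot divide a nonzero monomial, giving the desired contradiction.

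The only nontrivial input is Claim~\ref{power_list_claim_3}, which is handled computationally, so there is no real obstacle here; the argument is a direct structural analogue of the one for Claim~\ref{radical_claim_1}, with $6$ replaced by $8$ and the ten-term exponent list replaced by the nineteen-term exponent list coming from $S_t(x)$.
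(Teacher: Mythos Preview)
Your proposal is correct and follows exactly the approach the paper intends: it explicitly states that Claim~\ref{radical_claim_3} ``can be proved analogously to Claim~\ref{radical_claim_1}'', and your argument is precisely that analogue, with the threshold $6$ replaced by $8$ and Claim~\ref{power_list_claim_1} replaced by Claim~\ref{power_list_claim_3}.
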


\begin{claim}\label{prime_claim_3}
    For any $t \in \mathbb{N}_0$ and prime $p \ge 23$, we have $\Phi_p(x) \nmid S_t(x)$.
\end{claim}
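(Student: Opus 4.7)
The plan is to transcribe the argument of Claim \ref{prime_claim_1} with only numerical parameters adjusted. Suppose, for contradiction, that $\Phi_p(x) \mid S_t(x)$ for some $t \in \mathbb{N}_0$ and some prime $p \ge 23$. Since every root $\zeta$ of $\Phi_p(x)$ is a primitive $p$-th root of unity and therefore satisfies $\zeta^p = 1$, I can replace each $x^k$ appearing in $S_t(x)$ by $x^{k \bmod p}$ without altering its value at any such $\zeta$. This yields a polynomial $S_t^{\bmod p}(x)$ of degree at most $p - 1$ that is still divisible by $\Phi_p(x)$.

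Because $\deg \Phi_p(x) = p - 1$, the divisibility forces $S_t^{\bmod p}(x)$ to be either identically zero, or a nonzero rational multiple $c\,\Phi_p(x)$ of the cyclotomic polynomial itself. The first alternative is ruled out by Claim \ref{power_list_claim_3}: since $p \ge 23 \ge 8$, one of the $19$ exponents forming the defining sequence of $S_t(x)$ has a residue modulo $p$ attained by none of the other $18$, so its contribution to $S_t^{\bmod p}(x)$ cannot be cancelled. The second alternative is ruled out by term counting, since $c\,\Phi_p(x) = c(1 + x + \cdots + x^{p-1})$ has exactly $p \ge 23$ nonzero terms, whereas $S_t^{\bmod p}(x)$ inherits at most $19$ nonzero terms from $S_t(x)$.

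There is no real obstacle here; the argument is a routine adaptation of Claim \ref{prime_claim_1}. The only things worth double-checking are that the exponent list supplied by Claim \ref{power_list_claim_3} indeed matches the set of exponents appearing in $S_t(x)$ (it does, by direct inspection), and that the numerical bound $p \ge 23$ is chosen precisely so that $p$ exceeds the number $19$ of nonzero terms of $S_t(x)$, which is exactly what closes off the constant-multiple case.
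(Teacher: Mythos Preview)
Your proof is correct and follows exactly the approach the paper intends: the paper explicitly states that Claim~\ref{prime_claim_3} is proved analogously to Claim~\ref{prime_claim_1}, and your write-up carries out precisely that adaptation, with the correct count of $19$ exponents in $S_t(x)$ justifying the threshold $p \ge 23$.
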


\noindent
We can now prove Lemma \ref{st_lemma} as follows.

\begin{proof}[of Lemma \ref{st_lemma}]
    By way of contradiction, suppose that $\Phi_b(x) \mid S_t(x)$ holds for some $t \in \mathbb{N}_0$ and $b \ge 2$. From Claim \ref{radical_claim_3}, we obtain $\frac{b}{\rad(b)} < 8$. If $b$ has no prime factor below $23$, then by repeated use of Theorem \ref{filaschi_th}, we conclude that $\Phi_p(x) \mid S_t(x)$ is satisfied for some prime $p \ge 23$. However, by Claim~\ref{prime_claim_3}, this is not possible.

    Now, suppose that $b$ has a prime factor below $23$. By virtue of Theorem \ref{filaschi_th}, we can cancel out all the prime factors of $b$ above $19$. Therefore, $\Phi_{b'}(x) \mid S_t(x)$ holds for some $b' \ge 2$ whose prime factors are at most $19$ and such that $\frac{b'}{\rad(b')} < 8$. By Theorem \ref{filaschi_th}, we can also assume without loss of generality that the distinct prime factors $p_1, p_2, \ldots, p_k$ of $b'$ satisfy $\sum_{j = 1}^k (p_j - 2) \le 17$. Since there are finitely many such numbers $b'$, the proof can be completed analogously to Lemmas \ref{qt_lemma} and \ref{rt_lemma}, e.g., via a \texttt{SageMath} script, as shown in \cite{GitHub}.
\end{proof}

\subsection{\texorpdfstring{$T_t(x)$}{Tₜ(x)} polynomials}

We finish the section with the following lemma concerning the $T_t(x)$ polynomials.

\begin{lemma}\label{tt_lemma}
    For any $t \in \mathbb{N}_0$, we have $\Phi_b(x) \nmid T_t(x)$ for each $b \ge 3$.
\end{lemma}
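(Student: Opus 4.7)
The plan is to follow exactly the template of Lemma \ref{rt_lemma}, with the role played there by $Q_t$ now taken by $S_t$. First I would establish a power list claim in the style of Claim \ref{power_list_claim_2}: for every $\beta$ above some threshold $\beta_0$ and every $t \in \mathbb{N}_0$, at least one of the exponents appearing in $T_t(x)$ has a unique remainder modulo $\beta$ among the whole list of exponents. This would be verified by a \texttt{Python} script as in \cite{GitHub}. Combining this observation with Corollary \ref{cyclotomic_reduction_rad} and Lemma \ref{polynomial_division_lemma} then gives a radical bound $\frac{b}{\rad(b)} < \beta_0$ whenever $\Phi_b(x) \mid T_t(x)$, exactly as in Claim \ref{radical_claim_2}.

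The key structural observation, mirroring Claim \ref{four_free_claim_2}, is that the sub-polynomial consisting of the even-power terms of $T_t(x)$, after substituting $y = x^2$, is precisely $S_t(y)$; this can be verified by a direct comparison of coefficients term by term. Hence if $4 \mid b$, then Lemma \ref{cyclotomic_reduction_prime} together with Lemma \ref{polynomial_division_lemma} force $S_t(x^2)$ to be divisible by $\Phi_b(x)$, which in turn yields $\Phi_{b/2}(y) \mid S_t(y)$ with $b/2 \ge 2$, contradicting Lemma \ref{st_lemma}. So we may assume $2^2 \nmid b$. The next step is a prime claim analogous to Claim \ref{prime_claim_2}: for all sufficiently large primes $p$, neither $\Phi_p(x)$ nor $\Phi_{2p}(x)$ divides $T_t(x)$. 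As in the proof of Claim \ref{prime_claim_2}, one reduces each exponent modulo $p$ (with the $(-1)^{\lfloor \cdot / p \rfloor}$ signs in the $2p$ case), observes that the reduced polynomial has degree at most $p - 1$, and argues via the power list claim that it cannot be identically zero, while its small number of nonzero terms prevents it from being a nonzero scalar multiple of $\Phi_p(x)$ or $\Phi_{2p}(x)$ when $p$ is sufficiently large.

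The remainder then mirrors the end of the proof of Lemma \ref{rt_lemma}. If $b$ has no prime factor below the prime-claim threshold, then iterated use of Theorem \ref{filaschi_th} cancels distinct prime factors of $b$ until only one large prime remains, contradicting the prime claim. Otherwise, Theorem \ref{filaschi_th} strips all the large prime factors, leaving a divisor $b' \ge 3$ whose prime factors lie in a fixed finite set and that satisfies the bounds $\frac{b'}{\rad(b')} < \beta_0$, $2^2 \nmid b'$, and $\sum_j (p_j - 2) \le M$ for some computable $M$. This leaves only finitely many candidates, and a \texttt{SageMath} script verifies $\Phi_{b'}(x) \nmid T_t^{\bmod b'}(x)$ for each $t \in \{0, 1, \ldots, b' - 1\}$. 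The main obstacle I anticipate is quantitative rather than conceptual: since $T_t(x)$ has substantially more nonzero terms than $Q_t$, $R_t$, or $S_t$, the prime-claim threshold, the radical bound $\beta_0$, and the Filaseta--Schinzel cutoff $M$ will all be larger than in Lemmas \ref{qt_lemma}--\ref{st_lemma}, significantly enlarging the computer-assisted search space in the final step.
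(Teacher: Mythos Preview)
Your proposal is correct and follows essentially the same route as the paper. One small divergence: the paper's radical claim (Claim~\ref{radical_claim_4}) actually proves the sharper bound $\frac{b}{\rad(b)} < 13$ rather than the $\frac{b}{\rad(b)} < \beta_0$ (here $\beta_0 = 20$) you would get ``exactly as in Claim~\ref{radical_claim_2}''; to close the gap between $13$ and $20$ the paper performs an additional ad~hoc, computer-assisted case analysis for $\beta \in \{13,\ldots,19\}$ using five separate structural arguments. Your simpler route would still work and you have correctly anticipated the cost---a larger final \texttt{SageMath} search---so this is an optimization in the paper rather than a gap in your plan.
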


\noindent
By analogy, we can obtain the next result, so we omit its proof.

\begin{claim}\label{power_list_claim_4}
    For each $\beta \ge 20$, there exists an element of the sequence
    \begin{align}\label{tt_seq}
    \begin{split}
        8t + 27, 8t &+ 26, 8t + 25, 8t + 22, 8t + 20, 8t + 18, 8t + 17, 8t + 16, 8t + 15, 6t + 15, 4t + 26,\\
        4t &+ 25, 4t + 23, 4t + 21, 4t + 20, 4t + 19, 4t + 18, 4t + 17, 4t + 14, 4t + 13, 4t + 10,\\
        4t &+ 9, 4t + 8, 4t + 7, 4t + 6, 4t + 4, 4t + 2, 4t + 1, 2t + 12, 12, 11, 10, 9, 7, 5, 2, 1, 0
    \end{split}
    \end{align}
    with a unique remainder modulo $\beta$.
\end{claim}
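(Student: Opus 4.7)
The statement has exactly the same form as Claims \ref{power_list_claim_1}, \ref{power_list_claim_2}, and \ref{power_list_claim_3}, and the plan is to establish it by the same method: a structural reduction to a finite computer-assisted verification.

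The first step is to observe that every element of the sequence \eqref{tt_seq} has the form $at + c$ for some $a \in \{0, 2, 4, 6, 8\}$ and a small constant $c \in \{0, 1, \ldots, 27\}$; in particular, the residue of each element modulo $\beta$ depends only on $t \bmod \beta$, so for each $\beta$ it suffices to check the claim over $t \in \{0, 1, \ldots, \beta - 1\}$. The second step is to argue that for $\beta$ larger than some explicit threshold $\beta_0$ (which one can take of order $54$, i.e., roughly twice the largest constant $27$), a direct structural argument guarantees that one of the nine purely numerical constants $\{0, 1, 2, 5, 7, 9, 10, 11, 12\}$ must have a unique residue: once $\beta \ge 13$ these constants are pairwise distinct modulo $\beta$, and the number of values of $t \in \{0, \ldots, \beta-1\}$ that cause any one of them to collide with some $at + c$ is controlled by $\gcd(a, \beta) \le 8$, so not all nine constants can be ``shadowed'' simultaneously. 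The third step is to carry out, for the remaining finite range $20 \le \beta \le \beta_0$, a brute-force enumeration of all residues $t \in \{0, 1, \ldots, \beta - 1\}$, exactly as in \cite{GitHub}, verifying in each case that at least one of the thirty-eight residues appears with multiplicity one.

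The main obstacle is in the second step: with twenty-nine elements of the form $at + c$ (with $a \ge 2$), a single value of $t$ can produce multiple distinct collisions, since identities such as $8t + 15 \equiv 6t + 15 \pmod{\beta}$ (which holds whenever $\beta \mid 2t$) force several non-constant elements to collapse at once. One must therefore verify that no ``conspiracy'' of simultaneous coincidences can deprive all nine numerical constants of uniqueness; a careful pigeonhole bound on the size of the bad $t$-classes, accumulated pair-by-pair over the relevant $(a, a')$ combinations, suffices, but the bookkeeping is heavier than for Claims \ref{power_list_claim_1}--\ref{power_list_claim_3} because \eqref{tt_seq} is considerably longer than the sequences appearing in the previous subsections. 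Once the threshold $\beta_0$ is fixed, the residual enumeration over $20 \le \beta \le \beta_0$ and $t \in \{0, 1, \ldots, \beta - 1\}$ is routine.
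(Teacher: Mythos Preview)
Your overall strategy (reduce to a finite computer check by handling large $\beta$ structurally, then brute-forcing a finite range) matches the paper's approach, which simply says ``by analogy'' and points to the script in \cite{GitHub}.  However, the argument you sketch for step~2 does not work as written.

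You argue that ``the number of values of $t \in \{0, \ldots, \beta-1\}$ that cause any one of [the nine constants] to collide with some $at + c$ is controlled by $\gcd(a, \beta) \le 8$, so not all nine constants can be shadowed simultaneously.''  This is a non sequitur.  What you are bounding is the size of the set $T_k \subseteq \{0,\ldots,\beta-1\}$ of residues $t$ for which the constant $k$ is shadowed; summing over the twenty-nine non-constant terms gives $|T_k| \le 29\cdot 8$, say.  But the claim requires that \emph{for every} $t$ some constant is unshadowed, i.e.\ that $\bigcap_{k} T_k = \varnothing$.  Bounding each $|T_k|$ separately says nothing about the intersection: it could perfectly well have size up to $232$ regardless of how large $\beta$ is.  The later phrase ``a careful pigeonhole bound on the size of the bad $t$-classes \ldots\ suffices'' inherits the same quantifier confusion.

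A correct reduction for large $\beta$ must argue differently.  One clean route: for $\beta > 27$, elements within each coefficient-group are distinct modulo~$\beta$, so any collision is cross-group and forces a congruence $(a-a')t \equiv c'-c \pmod\beta$ with $|a-a'|\in\{2,4,6,8\}$ and $|c-c'|\le 27$.  Two such congruences $d_1 t\equiv r_1$ and $d_2 t\equiv r_2$ together force $\beta \mid d_1 r_2 - d_2 r_1$, an integer of absolute value at most $8\cdot 27 + 8\cdot 27 = 432$.  Hence for $\beta > 432$ all cross-group collisions that occur simultaneously must share a common rational slope $\rho = r/d$, and one then checks (over finitely many candidate $\rho$) that no single slope allows every one of the thirty-eight exponents to be matched.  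This gives a genuine finite threshold; your suggested $\beta_0 \approx 54$ is not justified by the argument you wrote.
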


\noindent
We resume with the following two claims.

\begin{claim}\label{radical_claim_4}
    Suppose that for some $t \in \mathbb{N}_0$ and $b \in \mathbb{N}$, we have $\Phi_b(x) \mid T_t(x)$. Then $\frac{b}{\rad(b)} < 13$.
\end{claim}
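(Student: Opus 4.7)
The plan is to follow the template of Claim \ref{radical_claim_1}. Set $\beta := \tfrac{b}{\rad(b)}$ and argue by contradiction: suppose $\beta \ge 13$. By Corollary \ref{cyclotomic_reduction_rad}, the exponents of all nonzero terms of $\Phi_b(x)$ are divisible by $\beta$, so Lemma \ref{polynomial_division_lemma}, applied with $W = \Phi_b$ and $V = T_t$, implies that $\Phi_b(x)$ divides each residue-class sub-polynomial $T_t^{(\beta, j)}(x)$ of $T_t(x)$, for every $j \in \{0, 1, \ldots, \beta - 1\}$.

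For $\beta \ge 20$, Claim \ref{power_list_claim_4} supplies an exponent in the sequence \eqref{tt_seq} with a unique remainder modulo $\beta$, so the corresponding $T_t^{(\beta, j)}(x)$ collapses to a single nonzero monomial $c x^\alpha$ with $c \in \mathbb{Z} \setminus \{0\}$. Such a monomial cannot be divisible by $\Phi_b(x)$ since $\deg \Phi_b(x) \ge 1$ whenever $b \ge 2$, yielding the required contradiction. This portion of the argument is a direct transcription of the corresponding step in Claim \ref{radical_claim_1}, only with the different auxiliary power-list claim.

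The principal obstacle is the narrow window $13 \le \beta \le 19$, where the singleton-remainder trick no longer applies uniformly. My plan here is a finite computer-assisted case analysis, mirroring the concluding step of Lemmas \ref{qt_lemma}--\ref{st_lemma}. For each $\beta \in \{13, 14, \ldots, 19\}$, Theorem \ref{filaschi_th} first lets me strip away all sufficiently large prime factors of $b$ (keeping only those that can contribute to $\rad(b)$), thereby leaving only a finite list of candidate values of $b$ with $b/\rad(b) = \beta$. For each surviving candidate $b$ and each residue of $t$ modulo $\beta$, I would form the reduced polynomial $T_t^{\bmod \beta}(x)$ obtained by replacing every exponent of $T_t(x)$ by its residue modulo $\beta$, and verify via a \texttt{SageMath} script, as in \cite{GitHub}, that $\Phi_b(x) \nmid T_t^{\bmod \beta}(x)$. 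Combining this verification with the direct argument for $\beta \ge 20$ completes the proof that $\tfrac{b}{\rad(b)} < 13$.
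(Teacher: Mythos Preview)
Your argument for $\beta \ge 20$ is correct and matches the paper. For $13 \le \beta \le 19$ there is a genuine gap: the object $T_t^{\bmod \beta}(x)$ you propose --- obtained by replacing every exponent by its residue modulo $\beta$ --- does \emph{not} preserve divisibility by $\Phi_b(x)$. A primitive $b$-th root of unity $\zeta$ satisfies $\zeta^b = 1$, not $\zeta^\beta = 1$, so $T_t^{\bmod \beta}(\zeta) \neq T_t(\zeta)$ in general, and verifying $\Phi_b(x) \nmid T_t^{\bmod \beta}(x)$ yields no contradiction. (There is also a secondary issue: Theorem~\ref{filaschi_th} strips some full prime power $p_j^{e_j}$ but does not let you choose which; if a prime dividing $\beta$ is stripped, the quotient no longer satisfies $b/\rad(b) = \beta$. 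This is fixable by applying the theorem only to primes of $b$ not dividing $\beta$.) Both problems can be repaired --- reduce modulo $b$, iterate over $t \bmod b$ --- but the resulting finite check would be far larger than what the paper actually performs.

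The paper handles $13 \le \beta \le 19$ without ever enumerating $b$. For each such $\beta$ and each residue of $t$ modulo $\beta$, it verifies by computer that some residue-class sub-polynomial $T_t^{(\beta,j)}$ (which \emph{is} divisible by $\Phi_b$ via Lemma~\ref{polynomial_division_lemma}) satisfies one of five structural obstructions: a singleton class; a two-element class whose coefficient magnitudes force a power of $\zeta$ to equal $\pm 2$, $\pm 3$, or $\pm\tfrac{3}{2}$; a three-element class where the triangle inequality rules out a coefficient $\pm 3$; or, for $\beta = 13$, a specific two-element class forcing $b \mid 13$ or $b \mid 26$. Each obstruction rules out $\Phi_b(x) \mid T_t^{(\beta,j)}(x)$ for \emph{every} $b$ with $b/\rad(b) = \beta$ at once, so no Filaseta--Schinzel reduction is needed at this stage of the argument.
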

\begin{proof}
    Let $\beta \coloneqq \frac{b}{\rad(b)}$ and by way of contradiction, suppose that $\beta \ge 13$. If $\beta \ge 20$, then we can reach a contradiction analogously to Claims \ref{radical_claim_1}, \ref{radical_claim_2} and \ref{radical_claim_3}. Now, suppose that $\beta \ \in \{ 13, 14, \ldots, 19\}$. In this case, a contradiction can be obtained via computer by showing that at least one of the following five statements is true for each such $\beta$ and any possible value of $t \bmod \beta$; see \cite{GitHub}.
    
    \bigskip\noindent
    \textbf{Statement 1:} There is an element of \eqref{tt_seq} with a unique remainder modulo $\beta$.

    If this is true, then Corollary \ref{cyclotomic_reduction_rad} and Lemma \ref{polynomial_division_lemma} imply that $\Phi_b(x)$ divides a polynomial of the form $c x^\alpha$ for some $c \in \mathbb{Z} \setminus \{ 0 \}$ and $\alpha \in \mathbb{N}_0$, which is impossible.

    \bigskip\noindent
    \textbf{Statement 2:} There are two elements of \eqref{tt_seq} that form an equivalence class modulo $\beta$, so that exactly one of them is from $\{ 6t + 15, 2t + 12 \}$ or exactly one of them is from $\{ 4t + 14, 4t + 13 \}$.

    In this case, Corollary \ref{cyclotomic_reduction_rad} and Lemma \ref{polynomial_division_lemma} imply that $\Phi_b(x)$ divides a polynomial of the form $c_1 x^{\alpha_1} + c_2 x^{\alpha_2}$, where $(|c_1|, |c_2|) \in \{ (3, 2), (3, 1), (2, 1) \}$ and $\alpha_1, \alpha_2 \in \mathbb{N}_0$. If we let $\zeta$ be a primitive $b$-th root of unity, then this means that some power of $\zeta$ equals $\pm \frac{3}{2}$ or $\pm 3$ or $\pm 2$, yielding a contradiction.

    \bigskip\noindent
    \textbf{Statement 3:} There are three elements of \eqref{tt_seq} that form an equivalence class modulo $\beta$, so that two of them are not from $\{ 6t + 15, 4t + 14, 4t + 13, 2t + 12 \}$, while the third is from $\{4t + 14, 4t + 13 \}$.

    Here, by Corollary \ref{cyclotomic_reduction_rad} and Lemma \ref{polynomial_division_lemma}, it follows that $\Phi_b(x)$ divides a polynomial of the form $c_1 x^{\alpha_1} + c_2 x^{\alpha_2} + c_3 x^{\alpha_3}$, where $|c_1| = |c_2| = 1$, $|c_3| = 3$ and $\alpha_1, \alpha_2, \alpha_3 \in \mathbb{N}_0$. Let $\zeta$ be a primitive $b$-th root of unity and note that
    \[
        c_1 \zeta^{\alpha_1 - \alpha_3} + c_2 \zeta^{\alpha_2 - \alpha_3} = -c_3 .
    \]
    The contradiction follows by observing that
    \[
        |c_1 \zeta^{\alpha_1 - \alpha_3} + c_2 \zeta^{\alpha_2 - \alpha_3}| \le |c_1 \zeta^{\alpha_1 - \alpha_3}| + |c_2 \zeta^{\alpha_2 - \alpha_3}| = 1 + 1 < 3 = |-c_3|.
    \]

    \bigskip\noindent
    \textbf{Statement 4:} $\beta = 13$ and the elements $4t + 20$ and $4t + 7$ form an equivalence class modulo $\beta$.

    In this case, Corollary \ref{cyclotomic_reduction_rad} and Lemma \ref{polynomial_division_lemma} give $\Phi_b(x) \mid -x^{4t + 20} + x^{4t + 7}$, i.e., $\Phi_b(x) \mid x^{13} - 1$. Hence, $b \mid 13$, which contradicts $\beta = 13$.

    \bigskip\noindent
    \textbf{Statement 5:} $\beta = 13$ and the elements $4t + 21$ and $4t + 8$ form an equivalence class modulo $\beta$.

    Here, Corollary \ref{cyclotomic_reduction_rad} and Lemma \ref{polynomial_division_lemma} give $\Phi_b(x) \mid -x^{4t + 21} - x^{4t + 8}$, i.e., $\Phi_b(x) \mid x^{13} + 1$. Therefore, $b \mid 26$, which contradicts $\beta = 13$.
\end{proof}

\begin{claim}\label{four_free_claim_4}
    Suppose that for some $t \in \mathbb{N}_0$ and $b \in \mathbb{N}$, we have $\Phi_b(x) \mid T_t(x)$. Then $2^2 \nmid b$.
\end{claim}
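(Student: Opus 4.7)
The plan is to mirror the argument used for Claim \ref{four_free_claim_2}, exploiting the substitution $x \mapsto x^2$ to reduce divisibility by $\Phi_b(x)$ to divisibility by $\Phi_{b/2}(x)$ for a polynomial already handled in an earlier subsection. Assume toward contradiction that $\Phi_b(x) \mid T_t(x)$ while $4 \mid b$. By Lemma \ref{cyclotomic_reduction_prime}, $\Phi_b(x) = \Phi_{b/2}(x^2)$, so every nonzero term of $\Phi_b(x)$ has even exponent. Lemma \ref{polynomial_division_lemma} applied with $\beta = 2$ and $j = 0$ then shows that $\Phi_b(x)$ divides the subpolynomial $T_t^{(2,0)}(x)$ obtained by keeping only those monomials of $T_t(x)$ whose exponent is even.

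The key step is a direct bookkeeping verification: collecting from the defining formula of $T_t(x)$ the monomials whose exponent is even yields
\begin{align*}
T_t^{(2,0)}(x) = \,\,& x^{8t+26} + x^{8t+22} + x^{8t+20} + x^{8t+18} - x^{8t+16} - x^{4t+26} - x^{4t+20} - x^{4t+18} \\
&+ 3 x^{4t+14} + x^{4t+10} - x^{4t+8} + x^{4t+6} - x^{4t+4} + x^{4t+2} - 2 x^{2t+12} \\
&+ x^{12} - x^{10} - x^2 - 1,
\end{align*}
and a term-by-term comparison with the definition of $S_t(x)$ shows that $T_t^{(2,0)}(x) = S_t(x^2)$. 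Hence $\Phi_{b/2}(x^2) = \Phi_b(x) \mid S_t(x^2)$, and the substitution $y = x^2$ yields $\Phi_{b/2}(y) \mid S_t(y)$. Since $4 \mid b$ forces $b/2 \ge 2$, this contradicts Lemma \ref{st_lemma}, completing the proof.

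The only nonroutine ingredient is the coefficient-matching identity $T_t^{(2,0)}(x) = S_t(x^2)$, which is purely mechanical but is the reason the four auxiliary families in Section \ref{sc_aux} are organized the way they are: the descent $x \mapsto x^2$ relates $T_t$ to $S_t$ in exactly the same manner that it relates $R_t$ to $Q_t$ in Claim \ref{four_free_claim_2}. Once this identity is in hand, the argument is a one-line invocation of Lemma \ref{st_lemma}, so no deeper obstacle arises.
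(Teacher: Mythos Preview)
Your proof is correct and follows essentially the same approach as the paper's own proof: both argue by contradiction from $4\mid b$, use Lemma~\ref{cyclotomic_reduction_prime} and Lemma~\ref{polynomial_division_lemma} to extract the even-exponent part of $T_t(x)$, identify that part as $S_t(x^2)$, and then invoke Lemma~\ref{st_lemma} with $b/2\ge 2$ to obtain the contradiction. The only cosmetic difference is that the paper phrases the descent in terms of a primitive $b$-th root of unity $\zeta$ giving a primitive $(b/2)$-th root $\zeta^2$ of $S_t$, whereas you phrase it as the polynomial divisibility $\Phi_{b/2}(y)\mid S_t(y)$ after the substitution $y=x^2$; these are equivalent.
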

\begin{proof}
    By way of contradiction, suppose that $2^2 \mid b$. In this case, Lemma \ref{cyclotomic_reduction_prime} implies that the powers of all the nonzero terms of $\Phi_b(x)$ are even. Therefore, by Lemma \ref{polynomial_division_lemma}, the polynomial
    \begin{align*}
        x^{8t + 26} &+ x^{8t + 22} + x^{8t + 20} + x^{8t + 18} - x^{8t + 16} - x^{4t + 26} - x^{4t + 20} - x^{4t + 18}\\
            &+ 3 x^{4t + 14} + x^{4t + 10} - x^{4t + 8} + x^{4t + 6} - x^{4t + 4} + x^{4t + 2} - 2 x^{2t + 12} + x^{12} - x^{10} - x^2 - 1
    \end{align*}
    has a primitive $b$-th root of unity among its roots. This means that $S_t(x)$ has a root that is a primitive $\frac{b}{2}$-th root of unity, which cannot be possible due to Lemma \ref{st_lemma}.
\end{proof}

\noindent
The next claim can be proved analogously to Claim \ref{prime_claim_2}, so we omit its proof.

\begin{claim}\label{prime_claim_4}
    For any $t \in \mathbb{N}_0$ and prime $p \ge 41$, we have $\Phi_p(x) \nmid T_t(x)$ and $\Phi_{2p}(x) \nmid T_t(x)$.
\end{claim}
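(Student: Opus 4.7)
The plan is to adapt the argument from the proof of Claim \ref{prime_claim_2} essentially verbatim. The only substitutions are that Claim \ref{power_list_claim_2} gets replaced by Claim \ref{power_list_claim_4}, and the term count of $R_t(x)$ (twenty) gets replaced by that of $T_t(x)$. A direct tally of the listed monomials shows that $T_t(x)$ has exactly $38$ nonzero terms; this accounts for the threshold $p \ge 41$ in the statement, which is the least prime strictly exceeding $38$.

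First I would handle $\Phi_p(x) \nmid T_t(x)$. Assume for contradiction that $\Phi_p(x) \mid T_t(x)$ for some prime $p \ge 41$. Since a primitive $p$-th root of unity $\zeta$ satisfies $\zeta^p = 1$, replacing each exponent $e$ of $T_t(x)$ by $e \bmod p$ produces a polynomial $T_t^{\bmod p}(x)$ of degree at most $p - 1 = \deg \Phi_p(x)$ that still vanishes at $\zeta$, hence is divisible by $\Phi_p(x)$. Therefore either $T_t^{\bmod p}(x) \equiv 0$ or $T_t^{\bmod p}(x) = c\,\Phi_p(x)$ for some $c \in \mathbb{Q} \setminus \{0\}$. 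The first alternative is ruled out by Claim \ref{power_list_claim_4} (applicable since $p \ge 41 \ge 20$), which provides an exponent whose remainder modulo $p$ is unique in the list; that term therefore cannot cancel. The second alternative would force $T_t^{\bmod p}(x)$ to have exactly $p \ge 41$ nonzero terms, contradicting the bound of $38$.

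For $\Phi_{2p}(x) \nmid T_t(x)$ the reasoning is analogous. A primitive $2p$-th root of unity $\zeta$ now satisfies $\zeta^p = -1$, so each exponent $e$ reduces to $e \bmod p$ with an extra sign factor $(-1)^{\lfloor e/p \rfloor}$, producing a signed reduction $T_t^{\bmod 2p}(x)$ of degree at most $p - 1 = \deg \Phi_{2p}(x)$. The same dichotomy yields either $T_t^{\bmod 2p}(x) \equiv 0$, contradicted again by Claim \ref{power_list_claim_4}, or $T_t^{\bmod 2p}(x) = c\,\Phi_{2p}(x)$ with exactly $p \ge 41 > 38$ nonzero terms, also impossible.

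I do not anticipate any real obstacle: the proof is a mechanical translation of the $p$ and $2p$ cases from Claim \ref{prime_claim_2}, with the term count of $T_t(x)$ being the only quantitative input. The one small subtlety worth checking is that the unique-remainder exponent supplied by Claim \ref{power_list_claim_4} carries a nonzero coefficient of $T_t(x)$; since every coefficient of $T_t(x)$ lies in $\{\pm 1, \pm 2, \pm 3\}$, this is automatic, so omitting the detailed write-up (as the paper does) is entirely justified.
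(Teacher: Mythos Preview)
Your proposal is correct and follows exactly the approach the paper intends: the paper explicitly states that Claim~\ref{prime_claim_4} ``can be proved analogously to Claim~\ref{prime_claim_2}, so we omit its proof,'' and your write-up is precisely that analogy, with the correct term count of $38$ for $T_t(x)$ justifying the threshold $p \ge 41$ and Claim~\ref{power_list_claim_4} (applicable for $\beta = p \ge 20$) replacing Claim~\ref{power_list_claim_2}.
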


\noindent
We are now in a position to finalize the proof of Lemma \ref{tt_lemma}.

\begin{proof}[of Lemma \ref{tt_lemma}]
    By way of contradiction, suppose that $\Phi_b(x) \mid T_t(x)$ holds for some $t \in \mathbb{N}_0$ and $b \ge 3$. From Claims \ref{radical_claim_4} and \ref{four_free_claim_4}, we get $\frac{b}{\rad(b)} < 13$ and $2^2 \nmid b$. If $b$ has no odd prime factor below $41$, then we can apply Theorem \ref{filaschi_th} to repeatedly cancel out distinct prime factors of $b$ that are above $37$ until one such divisor is left. Therefore, $\Phi_p(x) \mid T_t(x)$ or $\Phi_{2p}(x) \mid T_t(x)$ holds for some prime $p \ge 41$, yielding a contradiction by virtue of Claim \ref{prime_claim_4}.

    Now, suppose that $b$ has an odd prime factor below $41$. By using Theorem \ref{filaschi_th}, we can cancel out all the prime factors of $b$ above $37$. With this in mind, $\Phi_{b'}(x) \mid T_t(x)$ holds for some $b' \ge 3$ whose prime factors are at most $37$ and such that $\frac{b'}{\rad(b')} < 13$ and $2^2 \nmid b'$. Besides, by Theorem \ref{filaschi_th}, we can assume without loss of generality that the distinct prime factors $p_1, p_2, \ldots, p_k$ of $b'$ satisfy $\sum_{j = 1}^k (p_j - 2) \le 36$. The proof can now be conveniently completed, e.g., by using a \texttt{SageMath} script, as shown in \cite{GitHub}.
\end{proof}

\section{Main result}\label{sc_main}

In this section, we finalize the proof of Theorem \ref{main_th}. Note that from Theorem \ref{base_vt_th} and $\mathfrak{N}_0^{\mathrm{reg}} = \mathfrak{N}_1^{\mathrm{reg}} = \mathfrak{N}_2^{\mathrm{reg}} = \varnothing$, it follows that $\mathfrak{N}_d^{\mathrm{VT}} = \mathfrak{N}_d^{\mathrm{Cay}} = \varnothing$ holds whenever $d$ is odd or $d < 4$. Besides, Theorem \ref{base_cay_th} determines $\mathfrak{N}_d^{\mathrm{VT}}$ and $\mathfrak{N}_d^{\mathrm{Cay}}$ for the case when $4 \mid d$ and $d \ge 4$. By virtue of Theorem~\ref{base_vt_th}, to complete the proof of Theorem~\ref{main_th}, it suffices to prove the existence of a $d$-regular Cayley nut graph of order $n$ for any parameters $d$ and $n$ such that $d \equiv 2 \pmod 4$, $d \ge 6$, $4 \mid n$ and $n \ge d + 6$. This can be accomplished by constructing Cayley nut graphs based on dihedral groups with the desired order and degree. We begin with the following two results.

\begin{proposition}\label{construction_1_prop}
    For any $t \in \mathbb{N}_0$ and even $m \ge 4t + 8$, the graph
    \begin{equation}\label{maux_1}
        \Cay(\Dih(m), \{ r^{\pm 1}, r^{ \pm 2}, r^{\pm 3}, \ldots, r^{\pm(2t + 1)} \} \cup \{ s, rs, r^4 s, r^6 s \} \cup \{ r^8 s, r^9 s, r^{10} s, \ldots, r^{4t + 7} s \})
    \end{equation}
    is an $(8t + 6)$-regular Cayley nut graph of order $2m$.
\end{proposition}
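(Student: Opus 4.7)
The plan is to apply Lemma~\ref{dih_cay_lemma}. The order is $|\Dih(m)| = 2m$; the connection set is closed under inversion since each $r^a s$ is self-inverse (because $srs = r^{-1}$ gives $(r^a s)^{-1} = r^a s$) and the $r^{\pm j}$ come in symmetric pairs; the degree is $2(2t+1) + 4 + 4t = 8t + 6$; and the hypothesis $m \ge 4t + 8$ is exactly what is needed to force all listed connection-set elements to be pairwise distinct modulo $m$. Writing $\alpha(\zeta) = \sum_{j=1}^{2t+1}(\zeta^j + \zeta^{-j})$ and $\beta(\zeta) = 1 + \zeta + \zeta^4 + \zeta^6 + \sum_{j=8}^{4t+7}\zeta^j$, the reality of $\alpha(\zeta)$ (a sum of complex-conjugate pairs) yields eigenvalues $\alpha(\zeta) \pm |\beta(\zeta)|$ for $A_\zeta$, so $A_\zeta$ is singular precisely when $\alpha(\zeta)^2 = \beta(\zeta)\beta(\zeta^{-1})$.

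Direct computation handles $\zeta = \pm 1$. At $\zeta = 1$ one has $(\alpha(1), \beta(1)) = (4t+2, 4t+4)$ with eigenvalues $8t+6$ and $-2$, so $A_1$ is invertible. At $\zeta = -1$ (available because $m$ is even) one gets $(\alpha(-1), \beta(-1)) = (-2, 2)$ with eigenvalues $0$ and $-4$, so $A_{-1}$ indeed has a simple zero eigenvalue.

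For any other $m$-th root of unity $\zeta$, necessarily a primitive $b$-th root for some $b \ge 3$ with $b \mid m$, I need to rule out $\alpha(\zeta)^2 = \beta(\zeta) \beta(\zeta^{-1})$. Using the identity $\alpha(x) \cdot x^{2t+1} = (x^{2t+2}+1)(1 + x + \cdots + x^{2t})$ and multiplying through by $x^{4t+7}$, this condition becomes the honest polynomial divisibility $\Phi_b(x) \mid D(x)$, where
\[
    D(x) = x^5 \bigl((x^{2t+2}+1)(1+x+\cdots+x^{2t})\bigr)^2 - \beta(x) \cdot x^{4t+7} \beta(1/x) \in \mathbb{Z}[x].
\]
The plan is to manipulate $D(x)$ algebraically---peeling off the $(x+1)^2$ factor forced by the $\zeta = -1$ case, then exploiting the split of $\beta(x)$ into its four-term prefix $1 + x + x^4 + x^6$ and its arithmetic-progression tail $\sum_{j=8}^{4t+7}x^j$---so as to reduce $\Phi_b(x) \mid D(x)$ for $b \ge 3$ to the statement $\Phi_b(x) \mid Q_t(x)$. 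Lemma~\ref{qt_lemma} then delivers the contradiction, since $\Phi_b(x) \nmid Q_t(x)$ for every $b \ge 2$.

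The principal obstacle is establishing the precise polynomial identity that makes this reduction work; the distinctive shape of $Q_t(x)$---in particular the solitary $-2x^{t+3}$ term, which mirrors the two symmetric rotational arcs $\{r^{\pm 1}, \ldots, r^{\pm(2t+1)}\}$, together with the three boundary terms $-x^{4t+5} - x^{4t+4}$ and $-x^2 - 1$ that reflect the prefix $1 + \zeta + \zeta^4 + \zeta^6$ of $\beta$---indicates that the identity is tightly tuned to this specific connection set, and the exponent bookkeeping across the several summand groups in $\alpha(\zeta)^2 - \beta(\zeta)\beta(\zeta^{-1})$ is where the real work lies.
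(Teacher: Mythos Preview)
Your setup is correct and matches the paper: the order, degree, connection-set check, the $A_\zeta$ matrices, and the analysis at $\zeta=\pm1$ are all right, and reducing the case $\zeta\neq\pm1$ to a cyclotomic divisibility $\Phi_b(x)\mid D(x)$ with $b\ge3$ is exactly the right strategy. The gap is in the final step: the auxiliary polynomial you should land on is $R_t(x)$, not $Q_t(x)$. In the paper the determinant condition is multiplied by $(\zeta-1)^2$ (which collapses both $\alpha(\zeta)$ and the tail $\sum_{j=8}^{4t+7}\zeta^{\pm j}$ of $\beta$ into closed form), then by $\zeta^{4t+7}$; the resulting degree-$(8t+16)$ polynomial factors as $(1-\zeta)R_t(\zeta)$, and Lemma~\ref{rt_lemma} ($\Phi_b\nmid R_t$ for $b\ge3$) gives the contradiction. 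Equivalently, your $D(x)$ satisfies $(1-x)D(x)=R_t(x)$, so $\Phi_b\mid D$ for $b\ge3$ is exactly $\Phi_b\mid R_t$.

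Your proposed route to $Q_t$ does not go through. It is true that $(x+1)^2\mid D(x)$ (this follows from the palindromicity of $D$ together with $D(-1)=0$, not merely from the $\zeta=-1$ case as you suggest), but even after removing that factor you are left with a palindromic polynomial of degree $8t+12$, while $Q_t$ has degree $4t+7$; there is no factorisation or substitution that takes you from one to the other. The actual role of $Q_t$ in the paper is indirect: $Q_t(x^2)$ is the even-degree part of $R_t(x)$, and Lemma~\ref{qt_lemma} is invoked only inside the proof of Lemma~\ref{rt_lemma} (Claim~\ref{four_free_claim_2}) to rule out $4\mid b$. So replace ``reduce to $\Phi_b\mid Q_t$ and invoke Lemma~\ref{qt_lemma}'' by ``reduce to $\Phi_b\mid R_t$ and invoke Lemma~\ref{rt_lemma}''; the rest of your outline then becomes a correct proof, essentially identical to the paper's.
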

\begin{proof}
    Let
    \[
        A_\zeta = \begin{bmatrix}
            \sum_{j = 1}^{2t + 1} (\zeta^j + \zeta^{-j}) & 1 + \zeta^{-1} + \zeta^{-4} + \zeta^{-6} + \sum_{j = 8}^{4t + 7} \zeta^{-j}\\
            1 + \zeta + \zeta^4 + \zeta^6 + \sum_{j = 8}^{4t + 7} \zeta^j & \sum_{j = 1}^{2t + 1} (\zeta^j + \zeta^{-j})
        \end{bmatrix}
    \]
    for each $m$-th root of unity $\zeta$. Observe that
    \[
        A_1 = \begin{bmatrix}
            4t + 2 & 4t + 4\\
            4t + 4 & 4t + 2
        \end{bmatrix}
    \]
    is invertible, while
    \[
        A_{-1} = \begin{bmatrix}
            -2 & 2\\
            2 & -2
        \end{bmatrix}
    \]
    has a simple eigenvalue zero. Therefore, by Lemma \ref{dih_cay_lemma}, to complete the proof, it suffices to show that
    \begin{equation}\label{maux_2}
        \left( \sum_{j = 1}^{2t + 1} (\zeta^j + \zeta^{-j}) \right)^2 - \left( 1 + \zeta + \zeta^4 + \zeta^6 + \sum_{j = 8}^{4t + 7} \zeta^j \right) \left( 1 + \zeta^{-1} + \zeta^{-4} + \zeta^{-6} + \sum_{j = 8}^{4t + 7} \zeta^{-j} \right) = 0
    \end{equation}
    cannot hold for any $m$-th root of unity $\zeta \neq 1, - 1$.

    By way of contradiction, suppose that \eqref{maux_2} holds for some $m$-th root of unity $\zeta \neq 1, -1$. If we multiply both sides of \eqref{maux_2} by $(\zeta - 1)^2$, we get
    \begin{align}\label{maux_3}
    \begin{split}
        (\zeta^{2t + 2} &- \zeta + 1 - \zeta^{-2t-1})^2 - (\zeta^{4t + 8} - \zeta^8 + \zeta^7 - \zeta^6 + \zeta^5 - \zeta^4\\
        &+ \zeta^2 - 1)(\zeta - \zeta^{-1} + \zeta^{-3} - \zeta^{-4} + \zeta^{-5} - \zeta^{-6} + \zeta^{-7} - \zeta^{-4t-7}) = 0 .
    \end{split}
    \end{align}
    By expanding~\eqref{maux_3} and multiplying both sides by $\zeta^{4t + 7}$, it follows that
    \begin{align}\label{maux_4}
    \begin{split}
         - \zeta^{8t+16} &+ \zeta^{8t+14} - \zeta^{8t+12} + 2\zeta^{8t+11} - \zeta^{8t+10} + \zeta^{8t+9} - \zeta^{8t+8} - 2\zeta^{6t+10} + 2 \zeta^{6t+9} + \zeta^{4t+16}\\
         &- \zeta^{4t+15} + \zeta^{4t+12} - \zeta^{4t+11} + \zeta^{4t + 10} - 3\zeta^{4t+9} + 4\zeta^{4t+8} - 3\zeta^{4t+7} + \zeta^{4t+6} - \zeta^{4t+5}\\
         &+ \zeta^{4t+4} - \zeta^{4t+1} + \zeta^{4t} + 2\zeta^{2t+7} - 2 \zeta^{2t+6} -\zeta^8 + \zeta^7 - \zeta^6 + 2\zeta^5 - \zeta^4 + \zeta^2 - 1 = 0 .
    \end{split}
    \end{align}
    By factorizing \eqref{maux_4} accordingly, we obtain
    \begin{align*}
        (1 - \zeta)(\zeta^{8t + 15} &+ \zeta^{8t + 14} + \zeta^{8t + 11} - \zeta^{8t + 10} - \zeta^{8t + 8} + 2 \zeta^{6t + 9} - \zeta^{4t + 15} - \zeta^{4t + 11} - \zeta^{4t + 9}\\
        &+ 2 \zeta^{4t + 8} - 2 \zeta^{4t + 7} + \zeta^{4t + 6} + \zeta^{4t + 4} + \zeta^{4t} - 2 \zeta^{2t + 6} + \zeta^7 + \zeta^5 - \zeta^4 - \zeta - 1) = 0 .
    \end{align*}
    Since $\zeta \neq 1, -1$, the desired contradiction follows from Lemma \ref{rt_lemma}.
\end{proof}

\begin{proposition}\label{construction_2_prop}
    For any $t \in \mathbb{N}_0$ and even $m \ge 4t + 14$, the graph
    \begin{align*}
        \Cay(\Dih(m), &\{ r^{\pm 1}, r^{ \pm 2}, r^{\pm 3}, \ldots, r^{\pm(2t + 1)} \}\\
        & \cup \{ s, rs, r^2 s, r^5 s, r^7 s, r^9 s, r^{10} s  \} \cup \{ r^{13} s, r^{14} s, r^{15} s, \ldots, r^{4t + 13} s \})
    \end{align*}
    is an $(8t + 10)$-regular Cayley nut graph of order $2m$.
\end{proposition}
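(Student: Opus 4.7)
The plan is to imitate closely the proof of Proposition~\ref{construction_1_prop}, replacing the appeal to Lemma~\ref{rt_lemma} by one to Lemma~\ref{tt_lemma}. By Lemma~\ref{dih_cay_lemma}, the graph in question is a nut graph if and only if exactly one of the $2 \times 2$ matrices
\[
A_\zeta = \begin{bmatrix}
\sum_{j=1}^{2t+1}(\zeta^j+\zeta^{-j}) & f(\zeta^{-1}) \\
f(\zeta) & \sum_{j=1}^{2t+1}(\zeta^j+\zeta^{-j})
\end{bmatrix}
\]
has a simple zero eigenvalue while all the others are invertible, where
\[
f(\zeta) = 1 + \zeta + \zeta^2 + \zeta^5 + \zeta^7 + \zeta^9 + \zeta^{10} + \sum_{j=13}^{4t+13}\zeta^j,
\]
and $\zeta$ ranges over the $m$-th roots of unity.

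First, I would dispatch the two special roots $\zeta = \pm 1$. A direct count gives
\[
A_1 = \begin{bmatrix} 4t+2 & 4t+8 \\ 4t+8 & 4t+2 \end{bmatrix} \quad \text{and} \quad A_{-1} = \begin{bmatrix} -2 & -2 \\ -2 & -2 \end{bmatrix};
\]
the former has determinant $(4t+2)^2 - (4t+8)^2 = -6(8t+10) \neq 0$, hence is invertible, while the latter has spectrum $\{0,-4\}$, contributing exactly one simple zero eigenvalue. It therefore suffices to show that $\det A_\zeta \neq 0$ for every $m$-th root of unity $\zeta \notin \{1,-1\}$.

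To this end, I argue by contradiction: assume $\det A_\zeta = 0$ for some such $\zeta$, and multiply the equation by $(\zeta-1)^2$. Using the telescoping identities
\[
(\zeta-1)\sum_{j=1}^{2t+1}(\zeta^j+\zeta^{-j}) = \zeta^{2t+2} - \zeta + 1 - \zeta^{-(2t+1)}
\]
and
\[
(\zeta-1)f(\zeta) = -1 + \zeta^3 - \zeta^5 + \zeta^6 - \zeta^7 + \zeta^8 - \zeta^9 + \zeta^{11} - \zeta^{13} + \zeta^{4t+14},
\]
together with the analogue obtained by applying $(\zeta-1)$ to $f(\zeta^{-1})$, I would expand the product of the two off-diagonal factors and subtract it from the square of the diagonal factor. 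After multiplying through by $\zeta^{4t+13}$ to clear negative exponents, the resulting polynomial identity should factor as $(1-\zeta)\,T_t(\zeta) = 0$. Since $\zeta \neq 1$, this forces $T_t(\zeta) = 0$; taking $b \ge 3$ to be the multiplicative order of $\zeta$, we obtain $\Phi_b(x) \mid T_t(x)$, contradicting Lemma~\ref{tt_lemma}.

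The main obstacle is purely computational: one must carefully expand $\zeta^{4t+13}(\zeta-1)^2 \det A_\zeta$, collect the resulting monomials, and verify that after all cancellations they assemble precisely into $(1-\zeta)\,T_t(\zeta)$. This is the very reason the polynomial $T_t(x)$ was defined with its particular list of coefficients in Section~\ref{sc_aux}; the verification is tedious but mechanical, and once the factorization is in hand, the reduction to Lemma~\ref{tt_lemma} closes the argument.
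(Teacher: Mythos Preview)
Your proposal is correct and follows essentially the same approach as the paper's proof: compute $A_1$ and $A_{-1}$ directly, then for $\zeta\neq\pm1$ multiply $\det A_\zeta=0$ by $(\zeta-1)^2$, clear denominators via $\zeta^{4t+13}$, and factor the result as $(1-\zeta)T_t(\zeta)=0$ to reach a contradiction with Lemma~\ref{tt_lemma}. The telescoping identities you state match the paper's intermediate expression exactly, and your observation that the order $b$ of $\zeta$ satisfies $b\ge 3$ is precisely what is needed to invoke Lemma~\ref{tt_lemma}.
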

\begin{proof}
    Let
    \[
        A_\zeta = \begin{bmatrix}
            \sum_{j = 1}^{2t + 1} (\zeta^j + \zeta^{-j}) & \sum_{j \in \{ 0, 1, 2, 5, 7, 9, 10 \}} \zeta^{-j} + \sum_{j = 13}^{4t + 13} \zeta^{-j}\\
            \sum_{j \in \{ 0, 1, 2, 5, 7, 9, 10 \}} \zeta^j + \sum_{j = 13}^{4t + 13} \zeta^j & \sum_{j = 1}^{2t + 1} (\zeta^j + \zeta^{-j})
        \end{bmatrix}
    \]
    for each $m$-th root of unity $\zeta$. Observe that
    \[
        A_1 = \begin{bmatrix}
            4t + 2 & 4t + 8\\
            4t + 8 & 4t + 2
        \end{bmatrix}
    \]
    is invertible, while
    \[
        A_{-1} = \begin{bmatrix}
            -2 & -2\\
            -2 & -2
        \end{bmatrix}
    \]
    has a simple eigenvalue zero. By virtue of Lemma \ref{dih_cay_lemma}, to complete the proof, it is enough to show that
    \begin{equation}\label{maux_5}
        \left( \sum_{j = 1}^{2t + 1} (\zeta^j + \zeta^{-j}) \right)^2 - \left( \sum_{j \in \{ 0, 1, 2, 5, 7, 9, 10\}} \zeta^j + \sum_{j = 13}^{4t + 13} \zeta^j \right) \left( \sum_{j \in \{ 0, 1, 2, 5, 7, 9, 10\}} \zeta^{-j} + \sum_{j = 13}^{4t + 13} \zeta^{-j} \right) = 0
    \end{equation}
    does not hold for any $m$-th root of unity $\zeta \neq 1, -1$.

    By way of contradiction, suppose that \eqref{maux_5} holds for some $m$-th root of unity $\zeta \neq 1, -1$. By multiplying both sides of \eqref{maux_5} by $(\zeta - 1)^2$, we obtain
    \begin{align}\label{maux_6}
    \begin{split}
        (\zeta^{2t + 2} &- \zeta + 1 - \zeta^{-2t-1})^2 - (\zeta^{4t + 14} - \zeta^{13} + \zeta^{11} - \zeta^9 + \zeta^8 - \zeta^7 + \zeta^6 - \zeta^5\\
        &+ \zeta^3 - 1)(\zeta - \zeta^{-2} + \zeta^{-4} - \zeta^{-5} + \zeta^{-6} - \zeta^{-7} + \zeta^{-8} - \zeta^{-10} + \zeta^{-12} - \zeta^{-4t-13}) = 0 .
    \end{split}
    \end{align}
    If we expand \eqref{maux_6} and multiply both sides by $\zeta^{4t + 13}$, it follows that
    \begin{align}\label{maux_7}
    \begin{split}
        -\zeta^{8t + 28} &+ \zeta^{8t + 25} - \zeta^{8t + 23} + \zeta^{8t + 22} - \zeta^{8t + 21} + \zeta^{8t + 20} - \zeta^{8t + 19} + 2 \zeta^{8t + 17} - \zeta^{8t + 15}\\
        &- 2 \zeta^{6t + 16} + 2 \zeta^{6t + 15} + \zeta^{4t + 27} - \zeta^{4t + 25} - \zeta^{4t + 24} + \zeta^{4t + 23} + \zeta^{4t + 22} - 2 \zeta^{4t + 20}\\
        &+ 2 \zeta^{4t + 19} - \zeta^{4t + 17} - 3 \zeta^{4t + 15} + 6 \zeta^{4t + 14} - 3 \zeta^{4t + 13} - \zeta^{4t + 11} + 2 \zeta^{4t + 9} - 2 \zeta^{4t + 8}\\
        &+ \zeta^{4t + 6} + \zeta^{4t + 5} - \zeta^{4t + 4} - \zeta^{4t + 3} + \zeta^{4t + 1} + 2 \zeta^{2t + 13} - 2 \zeta^{2t + 12} - \zeta^{13} + 2 \zeta^{11}\\
        &- \zeta^9 + \zeta^8 - \zeta^7 + \zeta^6 - \zeta^5 + \zeta^3 - 1 = 0 .
    \end{split}
    \end{align}
    Now, by factorizing \eqref{maux_7}, we reach
    \begin{align*}
        (1 - \zeta)(\zeta^{8t + 27} &+ \zeta^{8t + 26} + \zeta^{8t + 25} + \zeta^{8t + 22} + \zeta^{8t + 20} + \zeta^{8t + 18} + \zeta^{8t + 17} - \zeta^{8t + 16} - \zeta^{8t + 15}\\
        &+ 2 \zeta^{6t + 15} - \zeta^{4t + 26} - \zeta^{4t + 25} + \zeta^{4t + 23} - \zeta^{4t + 21} - \zeta^{4t + 20} + \zeta^{4t + 19} - \zeta^{4t + 18}\\
        &- \zeta^{4t + 17} + 3 \zeta^{4t + 14} - 3 \zeta^{4t + 13} + \zeta^{4t + 10} + \zeta^{4t + 9} - \zeta^{4t + 8} + \zeta^{4t + 7} + \zeta^{4t + 6} - \zeta^{4t + 4}\\
        &+ \zeta^{4t + 2} + \zeta^{4t + 1} - 2 \zeta^{2t + 12} + \zeta^{12} + \zeta^{11} - \zeta^{10} - \zeta^9 - \zeta^7 - \zeta^5 - \zeta^2 - \zeta - 1) = 0 .
    \end{align*}
    Since $\zeta \neq 1, - 1$, a contradiction follows from Lemma \ref{tt_lemma}.
\end{proof}

\noindent
From Propositions \ref{construction_1_prop} and \ref{construction_2_prop} we obtain the next two corollaries, respectively.

\begin{corollary}
    Suppose that $d \ge 6$ is such that $d \equiv 6 \pmod 8$. Then for any $n \ge d + 10$ such that $4 \mid n$, there exists a $d$-regular Cayley nut graph of order $n$.
\end{corollary}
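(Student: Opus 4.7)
The plan is to deduce this corollary directly from Proposition~\ref{construction_1_prop} by a simple parameter matching. Since $d \equiv 6 \pmod 8$ and $d \ge 6$, I would write $d = 8t + 6$ with $t = (d - 6)/8 \in \mathbb{N}_0$, so that the construction in Proposition~\ref{construction_1_prop} produces graphs of exactly the desired degree once $t$ is fixed.

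Next I would set $m = n/2$. Since $4 \mid n$, the integer $m$ is even, matching the parity hypothesis of the proposition. The lower bound $n \ge d + 10$ translates to $m \ge (d + 10)/2 = 4t + 8$, which is precisely the requirement $m \ge 4t + 8$ in Proposition~\ref{construction_1_prop}. Conversely, every even $m \ge 4t + 8$ yields $n = 2m$ divisible by $4$ with $n \ge 2(4t + 8) = d + 10$, so the parameter ranges coincide exactly.

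Applying Proposition~\ref{construction_1_prop} with these values of $t$ and $m$ then produces an $(8t + 6)$-regular Cayley nut graph of order $2m = n$, i.e., a $d$-regular Cayley nut graph of order $n$, which completes the proof.

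There is essentially no obstacle here: the corollary is a straightforward re-parametrization of Proposition~\ref{construction_1_prop}, and the only thing to verify is the equivalence between the conditions ``$m$ even and $m \ge 4t + 8$'' on the one hand and ``$4 \mid n$ and $n \ge d + 10$'' on the other, which is immediate.
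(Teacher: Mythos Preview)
Your proposal is correct and matches the paper's approach: the paper simply states that this corollary follows from Proposition~\ref{construction_1_prop}, and your parameter matching $d = 8t + 6$, $m = n/2$ is exactly the intended translation between the hypotheses of the proposition and those of the corollary.
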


\begin{corollary}
    Suppose that $d \ge 6$ is such that $d \equiv 2 \pmod 8$. Then for any $n \ge d + 18$ such that $4 \mid n$, there exists a $d$-regular Cayley nut graph of order $n$.
\end{corollary}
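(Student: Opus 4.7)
The plan is to derive this directly from Proposition \ref{construction_2_prop} by making the appropriate substitution of parameters. Proposition \ref{construction_2_prop} produces an $(8t+10)$-regular Cayley nut graph of order $2m$ for each $t \in \mathbb{N}_0$ and each even $m \ge 4t + 14$, so the task reduces to showing that every pair $(n,d)$ of the form prescribed in the corollary can be matched by some admissible pair $(t,m)$ of parameters for that proposition.

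First, I would observe that the condition $d \equiv 2 \pmod{8}$ combined with $d \ge 6$ is equivalent to the stronger condition $d \ge 10$ with $d \equiv 2 \pmod 8$. Hence I can write $d = 8t + 10$ uniquely for $t \coloneqq (d - 10)/8 \in \mathbb{N}_0$. This matches the regularity parameter of the Cayley graph constructed in Proposition~\ref{construction_2_prop}.

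Next, given any $n \ge d + 18$ with $4 \mid n$, I would set $m \coloneqq n/2$. Since $4 \mid n$, the integer $m$ is automatically even. Moreover, the inequality $n \ge d + 18 = 8t + 28$ gives $m = n/2 \ge 4t + 14$, so the condition on $m$ in Proposition~\ref{construction_2_prop} is satisfied. Applying the proposition to this pair $(t, m)$ produces an $(8t+10)$-regular Cayley nut graph of order $2m$, i.e., a $d$-regular Cayley nut graph of order $n$, as required.

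There is no real obstacle here; the corollary is purely a reparametrization of Proposition~\ref{construction_2_prop}, where the hard work (the polynomial identity and the appeal to Lemma~\ref{tt_lemma}) has already been done. The only point requiring attention is verifying that the divisibility conditions on $n$ and $d$ translate correctly to integrality of $t$ and evenness of $m$, which is immediate from $d \equiv 2 \pmod 8$ and $4 \mid n$.
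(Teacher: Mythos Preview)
Your proof is correct and follows exactly the approach intended in the paper: the corollary is stated as an immediate consequence of Proposition~\ref{construction_2_prop}, and your reparametrization $t = (d-10)/8$, $m = n/2$ is precisely the intended translation.
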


Therefore, to complete the proof of Theorem \ref{main_th}, it remains to show the existence of a $d$-regular Cayley nut graph of order $n$ for the following two cases:
\begin{enumerate}[label=\textbf{(\roman*)}]
    \item $d \ge 6$, $d \equiv 6 \pmod 8$ and $n = d + 6$; and
    \item $d \ge 6$, $d \equiv 2 \pmod 8$ and $n \in \{ d + 6, d + 10, d + 14 \}$.
\end{enumerate}
We cover all but finitely many of the remaining $(n, d)$ pairs through the next three propositions.

\begin{proposition}\label{construction_3_prop}
    For any $d \ge 14$ such that $d \equiv 2 \pmod 4$, the graph
    \begin{equation}\label{maux_8}
        \overline{\Cay(\Dih(\tfrac{d + 6}{2}), \{ r^{\pm 2}, s, r^8 s, r^9 s \})}
    \end{equation}
    is a $d$-regular Cayley nut graph of order $d + 6$.
\end{proposition}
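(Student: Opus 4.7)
Set $m \coloneqq \tfrac{d+6}{2}$ and $G \coloneqq \Cay(\Dih(m), \{r^{\pm 2}, s, r^8 s, r^9 s\})$. The hypothesis $d \ge 14$ with $d \equiv 2 \pmod 4$ gives that $m$ is even and $m \ge 10$, so the five listed elements of the connection set are distinct in $\Dih(m)$; hence $G$ is $5$-regular of order $2m$, and $\overline{G}$ is $(2m - 6) = d$-regular of order $2m = d + 6$, which settles the regularity and order claims.

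For the nut property of $\overline{G}$, I would combine Lemmas \ref{reg_comp_lemma} and \ref{vt_nut_lemma}: since the principal eigenvalue $5$ of $G$ is mapped to $2m - 6 = d > 0$ in $\overline{G}$, the graph $\overline{G}$ has zero as a simple eigenvalue if and only if $G$ has $-1$ as a simple eigenvalue. Applying Lemma \ref{dih_cay_lemma}, the matrix $A(G)$ is similar to $\bigoplus_\zeta A_\zeta$ with
\[
A_\zeta = \begin{bmatrix} \zeta^2 + \zeta^{-2} & 1 + \zeta^{-8} + \zeta^{-9} \\ 1 + \zeta^8 + \zeta^9 & \zeta^2 + \zeta^{-2} \end{bmatrix},
\]
where $\zeta$ ranges over the $m$-th roots of unity. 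A direct check gives that $A_1$ has eigenvalues $5$ and $-1$ (contributing $1$ to the multiplicity of $-1$ in $G$) while $A_{-1}$ has eigenvalues $3$ and $1$ (contributing $0$). The whole proof thus reduces to verifying that $\det(A_\zeta + I) \ne 0$ for every $m$-th root of unity $\zeta \ne \pm 1$.

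This final verification is the main obstacle. Expanding $\zeta^9 \det(A_\zeta + I)$ produces a fixed polynomial $P(\zeta) \in \mathbb{Z}[\zeta]$ of degree $18$ that is independent of $m$. The palindromic symmetry $\det(A_\zeta + I) = \det(A_{\zeta^{-1}} + I)$ forces $\zeta = 1$ to be at least a double root of $P$, so we may factor $P(\zeta) = (1 - \zeta)^2 \tilde q(\zeta)$ for some palindromic $\tilde q \in \mathbb{Z}[\zeta]$ of degree $16$ with $\tilde q(\pm 1) \ne 0$. The task then becomes to show $\Phi_b(x) \nmid \tilde q(x)$ for every $b \ge 2$. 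Since $\tilde q$ is a single polynomial of bounded degree, only finitely many $b$ need be considered, so this reduces to a finite check — possibly dispatching large prime factors via Theorem \ref{filaschi_th} in the spirit of Section \ref{sc_aux}, and verifying the remaining cases by computer as in \cite{GitHub}.
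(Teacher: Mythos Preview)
Your proposal is correct and follows essentially the same route as the paper: reduce via Lemmas~\ref{reg_comp_lemma} and~\ref{vt_nut_lemma} to showing $-1$ is a simple eigenvalue of $G$, block-diagonalise $A(G)$ over the $m$-th roots of unity, and reduce everything to a single degree-$18$ integer polynomial whose non-divisibility by $\Phi_b$ for $b\ge 2$ is a finite computer check. The only cosmetic differences are that the paper absorbs the shift by $I$ into the matrices $A_\zeta$ from the outset (by allowing loops) and works with the raw degree-$18$ polynomial rather than first factoring out $(1-\zeta)^2$; neither changes the substance.
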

\begin{proof}
    By Lemmas \ref{reg_comp_lemma} and \ref{vt_nut_lemma}, it follows that the graph \eqref{maux_8} is a nut graph if and only if the graph
    \[
        \Cay(\Dih(\tfrac{d + 6}{2}), \{ r^{\pm 2}, s, r^8 s, r^9 s \})
    \]
    has a simple eigenvalue $-1$. Let $m = \frac{d + 6}{2}$ and for each $m$-th root of unity $\zeta$, let
    \[
        A_\zeta = \begin{bmatrix}
            1 + \zeta^2 + \zeta^{-2} & 1 + \zeta^{-8} + \zeta^{-9}\\
            1 + \zeta^8 + \zeta^9 & 1 + \zeta^2 + \zeta^{-2}
        \end{bmatrix} .
    \]
    Since the approach from Lemma \ref{dih_cay_lemma} can also be applied to graphs where loops are allowed, it suffices to prove that exactly one of the $A_\zeta$ matrices has a simple eigenvalue zero, while all the others are invertible.

    Since
    \[
        A_1 = \begin{bmatrix}
            3 & 3\\
            3 & 3
        \end{bmatrix}
    \]
    has a simple eigenvalue zero, it remains to verify that
    \begin{equation}\label{maux_9}
        (1 + \zeta^2 + \zeta^{-2})^2 - (1 + \zeta^8 + \zeta^9)(1 + \zeta^{-8} + \zeta^{-9}) = 0
    \end{equation}
    does not hold for any $m$-th root of unity $\zeta \neq 1$. By expanding \eqref{maux_9} and multiplying both sides by $\zeta^9$, we obtain
    \[
        -\zeta^{18} - \zeta^{17} + \zeta^{13} + 2 \zeta^{11} - \zeta^{10} - \zeta^8 + 2\zeta^7 + \zeta^5 - \zeta - 1 = 0 .
    \]
    The desired conclusion now follows by verifying that the polynomial
    \[
        -x^{18} - x^{17} + x^{13} + 2 x^{11} - x^{10} - x^8 + 2x^7 + x^5 - x - 1
    \]
    is not divisible by any cyclotomic polynomial $\Phi_b(x)$ with $b \ge 2$. This can easily be done by using a \texttt{SageMath} script; see \cite{GitHub}.
\end{proof}

\begin{proposition}\label{construction_4_prop}
    For any $d \ge 22$ such that $d \equiv 2 \pmod 4$, the graph
    \[
        \overline{\Cay(\Dih(\tfrac{d + 10}{2}), \{ r^{\pm 2}, r^{\pm 4}, s, r^2 s, r^6 s, r^7 s, r^{15} s \})}
    \]
    is a $d$-regular Cayley nut graph of order $d + 10$.
\end{proposition}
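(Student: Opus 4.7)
The proof will follow the same template as Proposition \ref{construction_3_prop}. Set $m \coloneqq \frac{d+10}{2}$, let $G$ denote the inner Cayley graph $\Cay(\Dih(m), \{r^{\pm 2}, r^{\pm 4}, s, r^2 s, r^6 s, r^7 s, r^{15} s\})$, and note that $G$ is $9$-regular of order $2m = d+10$, so its complement is $d$-regular as required. The hypothesis $d \ge 22$ ensures $m \ge 16$, so all elements of the connection set are distinct and well defined. By Lemmas \ref{reg_comp_lemma} and \ref{vt_nut_lemma}, the complement is a nut graph precisely when $G$ has $-1$ as a simple eigenvalue.

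Applying Lemma \ref{dih_cay_lemma} to the graph $G$ augmented with a loop at every vertex (equivalently, shifting the block matrices by $I$), I would introduce, for each $m$-th root of unity $\zeta$, the matrix
\[
    A_\zeta = \begin{bmatrix} 1 + \zeta^2 + \zeta^{-2} + \zeta^4 + \zeta^{-4} & 1 + \zeta^{-2} + \zeta^{-6} + \zeta^{-7} + \zeta^{-15} \\ 1 + \zeta^2 + \zeta^6 + \zeta^7 + \zeta^{15} & 1 + \zeta^2 + \zeta^{-2} + \zeta^4 + \zeta^{-4} \end{bmatrix},
\]
and reduce the task to showing that $A_1$ has a simple eigenvalue zero while $\det A_\zeta \ne 0$ for every $m$-th root of unity $\zeta \ne 1$. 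The first point is immediate: $A_1$ is the $2 \times 2$ matrix all of whose entries equal $5$, so its eigenvalues are $10$ and $0$ with the zero eigenvalue simple.

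The heart of the proof is the second point, which amounts to verifying that
\[
    \bigl(1 + \zeta^2 + \zeta^{-2} + \zeta^4 + \zeta^{-4}\bigr)^2 - \bigl(1 + \zeta^2 + \zeta^6 + \zeta^7 + \zeta^{15}\bigr)\bigl(1 + \zeta^{-2} + \zeta^{-6} + \zeta^{-7} + \zeta^{-15}\bigr) \ne 0
\]
for every $m$-th root of unity $\zeta \ne 1$. Expanding the left-hand side and multiplying through by $\zeta^{15}$ yields a fixed polynomial $P(x) \in \mathbb{Z}[x]$ of degree at most $30$, independent of $m$ and $d$. The claim then reduces to showing that $\Phi_b(x) \nmid P(x)$ for every integer $b \ge 2$ (divisibility by $\Phi_1(x) = x - 1$ is allowed and reflects the simple zero at $\zeta = 1$).

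Since $\deg P(x)$ is bounded by $30$, only finitely many cyclotomic polynomials are candidates for divisors of $P(x)$, so the final verification can be completed by a short \texttt{SageMath} script entirely analogous to the one used at the end of the proof of Proposition \ref{construction_3_prop}. The main obstacle here is purely computational — accurately expanding the determinant, factoring out the expected $(x-1)$ factor, and running the cyclotomic divisibility check — rather than requiring any new theoretical machinery such as the auxiliary lemmas from Section \ref{sc_aux}; indeed the fact that the relevant polynomial does not depend on $m$ is precisely what allows this single finite computation to dispose of the entire infinite family of cases.
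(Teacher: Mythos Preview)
Your proposal is correct and follows essentially the same route as the paper: reduce via Lemmas \ref{reg_comp_lemma}, \ref{vt_nut_lemma} and \ref{dih_cay_lemma} to showing that the $2\times2$ determinants $\det A_\zeta$ vanish only at $\zeta=1$, clear denominators by multiplying by $\zeta^{15}$ to obtain a single degree-$30$ integer polynomial, and then verify by a finite \texttt{SageMath} check that no $\Phi_b(x)$ with $b\ge 2$ divides it. The paper additionally records the explicit polynomial, but otherwise the argument matches yours.
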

\begin{proof}
    Let $m = \frac{d + 10}{2}$ and for each $m$-th root of unity $\zeta$, let
    \[
        A_\zeta = \begin{bmatrix}
            1 + \zeta^2 + \zeta^{-2} + \zeta^4 + \zeta^{-4} & 1 + \zeta^{-2} + \zeta^{-6} + \zeta^{-7} + \zeta^{-15}\\
            1 + \zeta^2 + \zeta^6 + \zeta^7 + \zeta^{15} & 1 + \zeta^2 + \zeta^{-2} + \zeta^4 + \zeta^{-4}
        \end{bmatrix} .
    \]
    Analogously to Proposition \ref{construction_3_prop}, it is enough to prove that exactly one of the $A_\zeta$ matrices has a simple eigenvalue zero, while all the others are invertible. Note that
    \[
        A_1 = \begin{bmatrix}
            5 & 5\\
            5 & 5
        \end{bmatrix}
    \]
    has a simple eigenvalue zero. Therefore, it remains to show that
    \begin{equation}\label{maux_10}
        (1 + \zeta^2 + \zeta^{-2} + \zeta^4 + \zeta^{-4})^2 - (1 + \zeta^2 + \zeta^6 + \zeta^7 + \zeta^{15})(1 + \zeta^{-2} + \zeta^{-6} + \zeta^{-7} + \zeta^{-15}) = 0
    \end{equation}
    cannot hold for any $m$-th root of unity $\zeta \neq 1$. By expanding \eqref{maux_10} and multiplying both sides by $\zeta^{15}$, we get
    \[
        -\zeta^{30} - \zeta^{28} - \zeta^{24} - \zeta^{22} + \zeta^{21} - \zeta^{20} + 2 \zeta^{19} + 3 \zeta^{17} - \zeta^{16} - \zeta^{14} + 3 \zeta^{13} + 2 \zeta^{11} - \zeta^{10} + \zeta^9 - \zeta^8 - \zeta^6 - \zeta^2 - 1 = 0 .
    \]
    Analogously to Proposition \ref{construction_3_prop}, the result can be obtained by performing a computer-assisted verification via \texttt{SageMath}, as shown in \cite{GitHub}.
\end{proof}

\begin{proposition}\label{construction_5_prop}
    For any $d \ge 26$ such that $d \equiv 2 \pmod 4$, the graph
    \[
        \overline{\Cay(\Dih(\tfrac{d + 14}{2}), \{ r^{\pm 2}, r^{\pm 4}, r^{\pm 7}, s, r^2 s, r^6 s, r^7 s, r^{14} s, r^{17} s, r^{19} s \})}
    \]
    is a $d$-regular Cayley nut graph of order $d + 14$.
\end{proposition}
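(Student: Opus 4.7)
The plan is to follow the template of Propositions \ref{construction_3_prop} and \ref{construction_4_prop} essentially verbatim. Since the construction carries no free parameter $t$, the whole argument collapses to a single fixed polynomial identity and does not require any of the auxiliary lemmas of Section \ref{sc_aux}.

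First I set $m = \frac{d+14}{2}$ and note that $d \ge 26$ forces $m \ge 20$, so the listed rotation exponents $\pm 2, \pm 4, \pm 7$ and reflection exponents $0, 2, 6, 7, 14, 17, 19$ are pairwise distinct modulo $m$; the inner Cayley graph is $16$-regular on $2m = d + 14$ vertices, making its complement $d$-regular on the required number of vertices. By Lemmas \ref{reg_comp_lemma} and \ref{vt_nut_lemma}, the complement is a nut graph if and only if the inner Cayley graph has $-1$ as a simple eigenvalue. Shifting the adjacency matrix by the identity and applying Lemma \ref{bicirc_lemma} exactly as in the proof of Proposition \ref{construction_3_prop}, this reduces to requiring that exactly one of the $2 \times 2$ matrices
\[
    A_\zeta = \begin{bmatrix}
        1 + \sum_{j \in \{2,4,7\}} (\zeta^j + \zeta^{-j}) & \sum_{j \in \{0,2,6,7,14,17,19\}} \zeta^{-j}\\
        \sum_{j \in \{0,2,6,7,14,17,19\}} \zeta^{j} & 1 + \sum_{j \in \{2,4,7\}} (\zeta^j + \zeta^{-j})
    \end{bmatrix}
\]
has a simple zero eigenvalue as $\zeta$ runs over the $m$-th roots of unity, while the rest are invertible.

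A direct substitution gives $A_1 = \begin{bmatrix} 7 & 7 \\ 7 & 7 \end{bmatrix}$, whose spectrum is $\{14, 0\}$, providing the required simple zero eigenvalue. For $\zeta \ne 1$, I would expand $\det A_\zeta$, clear denominators by multiplying through by $\zeta^{19}$, and obtain a fixed polynomial $P(x) \in \mathbb{Z}[x]$ of degree at most $38$ whose vanishing at some $m$-th root of unity $\zeta \ne 1$ is precisely what must be excluded. The proof then closes, as in the last paragraph of Proposition \ref{construction_4_prop}, through a \texttt{SageMath} verification that $\Phi_b(x) \nmid P(x)$ for every $b \ge 2$.

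The only genuine obstruction I foresee is the soundness of the construction itself: the polynomial $P(x)$ has to be free of every cyclotomic factor $\Phi_b(x)$ with $b \ge 2$, which is not automatic. However, the exponent list in the connection set is clearly engineered for this purpose, and since $P(x)$ is fixed with only finitely many cyclotomic divisors, the verification is entirely routine and, as in Propositions \ref{construction_3_prop} and \ref{construction_4_prop}, can be offloaded to the \texttt{SageMath} script in \cite{GitHub}.
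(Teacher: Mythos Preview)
Your proposal is correct and follows essentially the same route as the paper: reduce via Lemmas \ref{reg_comp_lemma}, \ref{vt_nut_lemma} and the bicirculant block diagonalization to the $2\times 2$ matrices $A_\zeta$, check that $A_1$ has a simple zero eigenvalue, and then exclude $\det A_\zeta = 0$ for $\zeta \neq 1$ by clearing denominators with $\zeta^{19}$ to obtain a fixed degree-$38$ integer polynomial and verifying in \texttt{SageMath} that no $\Phi_b(x)$ with $b \ge 2$ divides it. Your added remark that $m \ge 20$ guarantees the listed exponents are pairwise distinct is a useful sanity check the paper leaves implicit.
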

\begin{proof}
    Let $m = \frac{d + 14}{2}$ and for each $m$-th root of unity, let
    \[
        A_\zeta = \begin{bmatrix}
            1 + \zeta^2 + \zeta^{-2} + \zeta^4 + \zeta^{-4} + \zeta^7 + \zeta^{-7} & 1 + \zeta^{-2} + \zeta^{-6} + \zeta^{-7} + \zeta^{-14} + \zeta^{-17} + \zeta^{-19}\\
            1 + \zeta^2 + \zeta^6 + \zeta^7 + \zeta^{14} + \zeta^{17} + \zeta^{19} & 1 + \zeta^2 + \zeta^{-2} + \zeta^4 + \zeta^{-4} + \zeta^7 + \zeta^{-7}
        \end{bmatrix} .
    \]
    Analogously to Propositions \ref{construction_3_prop} and \ref{construction_4_prop}, it suffices to verify that exactly one of the $A_\zeta$ matrices has a simple eigenvalue zero, while all the others are invertible. Since
    \[
        A_1 = \begin{bmatrix}
            7 & 7\\
            7 & 7
        \end{bmatrix}
    \]
    has a simple eigenvalue zero, it remains to show that
    \begin{align}\label{maux_11}
    \begin{split}
        (1 &+ \zeta^2 + \zeta^{-2} + \zeta^4 + \zeta^{-4} + \zeta^7 + \zeta^{-7})^2 - (1 + \zeta^2 + \zeta^6 + \zeta^7\\
        &+ \zeta^{14} + \zeta^{17} + \zeta^{19})(1 + \zeta^{-2} + \zeta^{-6} + \zeta^{-7} + \zeta^{-14} + \zeta^{-17} + \zeta^{-19}) = 0
    \end{split}
    \end{align}
    does not hold for any $m$-th root of unity $\zeta \neq 1$. If we expand \eqref{maux_11} and multiply both sides by $\zeta^{19}$, we obtain
    \begin{align*}
        -\zeta^{38} &- 2 \zeta^{36}-\zeta^{34}-\zeta^{32}-2 \zeta^{31}+\zeta^{30}-\zeta^{29}+2 \zeta^{28}+\zeta^{25}+2 \zeta^{23}+\zeta^{22}+2 \zeta^{21}-\zeta^{20}\\
        &-\zeta^{18}+2 \zeta^{17}+\zeta^{16}+2 \zeta^{15}+\zeta^{13}+2 \zeta^{10}-\zeta^9+\zeta^8-2 \zeta^7-\zeta^6-\zeta^4-2 \zeta^2-1 = 0.
    \end{align*}
    Similarly to Propositions \ref{construction_3_prop} and \ref{construction_4_prop}, the proof can be completed by using a \texttt{SageMath} script; see \cite{GitHub}.
\end{proof}

With Propositions \ref{construction_3_prop}--\ref{construction_5_prop} in mind, it remains to verify the existence of a $d$-regular Cayley nut graph of order $n$ for each $(n, d) \in \{ (12, 6), (16, 10), (20, 10), (24, 10), (28, 18), (32, 18) \}$. This is not difficult to confirm through Tables \ref{table:vt} and \ref{table:cay}, which arise by performing an exhaustive search over all the vertex-transitive graphs of order below $48$; see \cite{HoRoy2020, RoyHo2020}. Besides, by using, e.g., \texttt{SageMath}, we can confirm that $\Cay(\Dih(6), \{ r^{\pm 1}, r^3, s, r^2 s, r^3 s \})$ is a $6$-regular Cayley nut graph of order $12$, while
\[
    \Cay(\Dih(m), \{ r^{\pm 1}, r^{\pm 2}, r^{\pm 3}, s, r^2 s, r^3 s, r^4 s \})
\]
is a $10$-regular Cayley nut graph of order $2m$ for each $m \in \{8, 10, 12 \}$ and
\[
    \Cay(\Dih(m), \{ r^{\pm 1}, r^{\pm 2}, r^{\pm 3}, r^{\pm 4}, r^{\pm 5}, s, r^2 s, r^3 s, r^4 s, r^5 s, r^6 s, r^7 s, r^8 s \})
\]
is an $18$-regular Cayley nut graph of order $2m$ for each $m \in \{14, 16 \}$. These observations complete the proof of Theorem \ref{main_th}.

We mention in passing that every vertex-transitive nut graph of order $8$, $12$, $14$, $22$, $38$ or $46$ is a Cayley graph since none of the numbers $8$, $12$, $14$, $22$, $38$ and $46$ is a non-Cayley number; see \cite{Marusic1983, Marusic1985, McKayPrae1994, McKayPrae1996} and the references therein. Although there exist non-Cayley vertex-transitive graphs of orders $10$, $28$ and $44$, none of them is a nut graph, hence the corresponding entries of Tables \ref{table:vt} and \ref{table:cay} are again the same.

The circulant graphs $\Cay(\mathbb{Z}_8, \{ \pm 1, \pm 2 \})$ and $\Cay(\mathbb{Z}_{10}, \{ \pm 1, \pm 2 \})$ are the unique $4$-regular vertex-transitive nut graph of order $8$ and $10$, respectively; see Figure \ref{vt_1}. Also, the noncirculant Cayley graphs
\[
    \Cay(\Dih(6), \{ r^{\pm 1}, r^3, s, r^2 s, r^3 s \}) \quad \mbox{and} \quad \overline{\Cay(\Dih(6), \{ r^{\pm 1}, s \})} \cong \overline{C_6 \Osq K_2}
\]
are the unique $6$- and $8$-regular vertex-transitive nut graph of order $12$, respectively; see Figure \ref{vt_2}. Moreover, there are exactly two $10$-regular vertex-transitive nut graphs of order $16$, one of which is the Cayley graph
\[
    \Cay(\Dih(8), \{ r^{\pm 1}, r^{\pm 2}, r^{\pm 3}, s, r^2 s, r^3 s, r^4 s \}) \cong \overline{\Cay(\Dih(8), \linebreak \{ r^4, r s, r^5 s, r^6 s, r^7 s\})},
\]
while the other is non-Cayley; see Figure \ref{vt_3}. Observe that the graph from Figure \ref{vt_3a} has a M\"{o}bius ladder \cite{Flapan1989} as a spanning subgraph, while the graph from Figure \ref{vt_3b} contains two disjoint M\"{o}bius ladders of order $8$.

It is not difficult to prove that the graph $\overline{\Cay(\mathbb{Z}_{2m}, \{ \pm 1, m \})}$, whose complement is a M\"{o}bius ladder, is a $(2m - 4)$-regular nut graph of order $2m$, for any $m \ge 4$ such that $4 \mid m$. Therefore, the graphs $\overline{\Cay(\mathbb{Z}_{16}, \{ \pm 1, 8 \})}$ and $\overline{\Cay(\mathbb{Z}_{32}, \{ \pm 1, 16 \})}$ are the unique $12$-regular vertex-transitive nut graph of order $16$ and $28$-regular vertex-transitive nut graph of order $32$, respectively, while $\overline{\Cay(\mathbb{Z}_{24}, \{ \pm 1, 12 \})}$ is one of the two $20$-regular vertex-transitive nut graphs of order $24$. The other $20$-regular vertex-transitive nut graph of order $24$ is also a Cayley graph and its complement is the Kronecker cover \cite{Pisanski2018} of the D\"{u}rer graph \cite{PiTu2002}; see Figure \ref{vt_4b}. Also, observe that there are exactly two $16$-regular vertex-transitive nut graphs of order $20$ and they are both Cayley graphs. Their complements are the prism graph $C_{10} \Osq K_2$ and the cubic hamiltonian graph that can be described as $[5, -5]^{10}$ using the exponential LCF notation \cite{Frucht1977}; see Figure \ref{vt_5}.

\begin{sidewaystable}[!p] 
\centering
\footnotesize
\setlength{\tabcolsep}{3.25pt}
\begin{tabular}{|c||rrrrrrrrrrrrrrrrrrrr|}
\hline 
\backslashbox{$d$}{$n$} & 8 & 10 & 12 & 14 & 16 & 18 & 20 & 22 & 24 & 26 & 28 & 30 & 32 & 34 & 36 & 38 & 40 & 42 & 44 & 46 \\
\hline 
\hline 
$4$ & $1$ & $1$ & $2$ & $2$ & $4$ & $2$ & $7$ & $4$ & $7$ & $5$ & $8$ & $6$ & $8$ & $7$ & $12$ & $8$ & $15$ & $9$ & $14$ & $10$\\
$6$ & $0$ & $0$ & $1$ & $0$ & $4$ & $0$ & $15$ & $0$ & $39$ & $0$ & $55$ & $0$ & $72$ & $0$ & $105$ & $0$ & $224$ & $0$ & $265$ & $0$\\
$8$ & --- & $0$ & $1$ & $3$ & $9$ & $14$ & $46$ & $38$ & $182$ & $92$ & $337$ & $180$ & $1152$ & $304$ & $1501$ & $476$ & $3344$ & $954$ & $3921$ & $1095$\\
$10$ & --- & --- & $0$ & $0$ & $2$ & $0$ & $30$ & $0$ & $231$ & $0$ & $520$ & $0$ & $1826$ & $0$ & $4170$ & $0$ & $11046$ & $0$ & $17496$ & $0$\\
$12$ & --- & --- & --- & $0$ & $1$ & $7$ & $41$ & $50$ & $337$ & $251$ & $1042$ & $1255$ & $6353$ & $2736$ & $12971$ & $7051$ & $52394$ & $23428$ & $76948$ & $34140$\\
$14$ & --- & --- & --- & --- & $0$ & $0$ & $9$ & $0$ & $201$ & $0$ & $1085$ & $0$ & $8284$ & $0$ & $27709$ & $0$ & $116986$ & $0$ & $225700$ & $0$\\
$16$ & --- & --- & --- & --- & --- & $0$ & $2$ & $9$ & $104$ & $147$ & $1134$ & $1293$ & $16569$ & $6607$ & $53738$ & $28713$ & $298855$ & $133196$ & $673180$ & $324689$\\
$18$ & --- & --- & --- & --- & --- & --- & $0$ & $0$ & $18$ & $0$ & $418$ & $0$ & $7178$ & $0$ & $40935$ & $0$ & $320178$ & $0$ & $909468$ & $0$\\
$20$ & --- & --- & --- & --- & --- & --- & --- & $0$ & $2$ & $13$ & $164$ & $389$ & $4650$ & $4192$ & $37632$ & $34901$ & $381977$ & $278017$ & $1277372$ & $1054239$\\
$22$ & --- & --- & --- & --- & --- & --- & --- & --- & $0$ & $0$ & $27$ & $0$ & $1232$ & $0$ & $21720$ & $0$ & $287618$ & $0$ & $1379665$ & $0$\\
$24$ & --- & --- & --- & --- & --- & --- & --- & --- & --- & $0$ & $3$ & $23$ & $417$ & $696$ & $10411$ & $12764$ & $222069$ & $211740$ & $1417958$ & $1244165$\\
$26$ & --- & --- & --- & --- & --- & --- & --- & --- & --- & --- & $0$ & $0$ & $28$ & $0$ & $2862$ & $0$ & $81731$ & $0$ & $781098$ & $0$\\
$28$ & --- & --- & --- & --- & --- & --- & --- & --- & --- & --- & --- & $0$ & $1$ & $23$ & $566$ & $1292$ & $31537$ & $43212$ & $388524$ & $537870$\\
$30$ & --- & --- & --- & --- & --- & --- & --- & --- & --- & --- & --- & --- & $0$ & $0$ & $55$ & $0$ & $5809$ & $0$ & $159876$ & $0$\\
$32$ & --- & --- & --- & --- & --- & --- & --- & --- & --- & --- & --- & --- & --- & $0$ & $4$ & $29$ & $1198$ & $2806$ & $53256$ & $81165$\\
$34$ & --- & --- & --- & --- & --- & --- & --- & --- & --- & --- & --- & --- & --- & --- & $0$ & $0$ & $80$ & $0$ & $9844$ & $0$\\
$36$ & --- & --- & --- & --- & --- & --- & --- & --- & --- & --- & --- & --- & --- & --- & --- & $0$ & $3$ & $54$ & $1378$ & $3790$\\
$38$ & --- & --- & --- & --- & --- & --- & --- & --- & --- & --- & --- & --- & --- & --- & --- & --- & $0$ & $0$ & $113$ & $0$\\
$40$ & --- & --- & --- & --- & --- & --- & --- & --- & --- & --- & --- & --- & --- & --- & --- & --- & --- & $0$ & $5$ & $43$\\
$42$ & --- & --- & --- & --- & --- & --- & --- & --- & --- & --- & --- & --- & --- & --- & --- & --- & --- & --- & $0$ & $0$\\
$44$ & --- & --- & --- & --- & --- & --- & --- & --- & --- & --- & --- & --- & --- & --- & --- & --- & --- & --- & --- & $0$\\
\hline 
\hline 
$\Sigma$ & $1$ & $1$ & $4$ & $5$ & $20$ & $23$ & $150$ & $101$ & $1121$ & $508$ & $4793$ & $3146$ & $47770$ & $14565$ & $214391$ & $85234$ & $1815064$ & $693416$ & $7376081$ & $3281206$ \\
\hline 
\end{tabular}
\caption{The number of vertex-transitive nut graphs of a given order $n$ and degree $d$.}
\label{table:vt}
\end{sidewaystable}

\begin{sidewaystable}[!p] 
\centering
\footnotesize
\setlength{\tabcolsep}{3.25pt}
\begin{tabular}{|c||rrrrrrrrrrrrrrrrrrrr|}
\hline 
\backslashbox{$d$}{$n$} & 8 & 10 & 12 & 14 & 16 & 18 & 20 & 22 & 24 & 26 & 28 & 30 & 32 & 34 & 36 & 38 & 40 & 42 & 44 & 46 \\
\hline 
\hline 
$4$ & $1$ & $1$ & $2$ & $2$ & $3$ & $2$ & $7$ & $4$ & $7$ & $5$ & $8$ & $5$ & $7$ & $7$ & $12$ & $8$ & $15$ & $9$ & $14$ & $10$\\
$6$ & $0$ & $0$ & $1$ & $0$ & $3$ & $0$ & $14$ & $0$ & $37$ & $0$ & $55$ & $0$ & $66$ & $0$ & $102$ & $0$ & $216$ & $0$ & $265$ & $0$\\
$8$ & --- & $0$ & $1$ & $3$ & $9$ & $13$ & $44$ & $38$ & $181$ & $86$ & $337$ & $179$ & $1090$ & $292$ & $1485$ & $476$ & $3303$ & $954$ & $3921$ & $1095$\\
$10$ & --- & --- & $0$ & $0$ & $1$ & $0$ & $29$ & $0$ & $230$ & $0$ & $520$ & $0$ & $1764$ & $0$ & $4142$ & $0$ & $10938$ & $0$ & $17496$ & $0$\\
$12$ & --- & --- & --- & $0$ & $1$ & $7$ & $40$ & $50$ & $337$ & $251$ & $1042$ & $1251$ & $6211$ & $2736$ & $12948$ & $7051$ & $52194$ & $23428$ & $76948$ & $34140$\\
$14$ & --- & --- & --- & --- & $0$ & $0$ & $8$ & $0$ & $200$ & $0$ & $1085$ & $0$ & $8068$ & $0$ & $27663$ & $0$ & $116598$ & $0$ & $225700$ & $0$\\
$16$ & --- & --- & --- & --- & --- & $0$ & $2$ & $9$ & $103$ & $141$ & $1134$ & $1289$ & $16198$ & $6557$ & $53667$ & $28713$ & $298282$ & $133196$ & $673180$ & $324689$\\
$18$ & --- & --- & --- & --- & --- & --- & $0$ & $0$ & $18$ & $0$ & $418$ & $0$ & $6974$ & $0$ & $40913$ & $0$ & $319663$ & $0$ & $909468$ & $0$\\
$20$ & --- & --- & --- & --- & --- & --- & --- & $0$ & $2$ & $13$ & $164$ & $386$ & $4519$ & $4192$ & $37578$ & $34901$ & $381495$ & $278017$ & $1277372$ & $1054239$\\
$22$ & --- & --- & --- & --- & --- & --- & --- & --- & $0$ & $0$ & $27$ & $0$ & $1192$ & $0$ & $21677$ & $0$ & $287139$ & $0$ & $1379665$ & $0$\\
$24$ & --- & --- & --- & --- & --- & --- & --- & --- & --- & $0$ & $3$ & $22$ & $371$ & $684$ & $10396$ & $12764$ & $221593$ & $211740$ & $1417958$ & $1244165$\\
$26$ & --- & --- & --- & --- & --- & --- & --- & --- & --- & --- & $0$ & $0$ & $24$ & $0$ & $2833$ & $0$ & $81421$ & $0$ & $781098$ & $0$\\
$28$ & --- & --- & --- & --- & --- & --- & --- & --- & --- & --- & --- & $0$ & $1$ & $23$ & $560$ & $1292$ & $31405$ & $43210$ & $388524$ & $537870$\\
$30$ & --- & --- & --- & --- & --- & --- & --- & --- & --- & --- & --- & --- & $0$ & $0$ & $52$ & $0$ & $5755$ & $0$ & $159876$ & $0$\\
$32$ & --- & --- & --- & --- & --- & --- & --- & --- & --- & --- & --- & --- & --- & $0$ & $4$ & $29$ & $1179$ & $2806$ & $53256$ & $81165$\\
$34$ & --- & --- & --- & --- & --- & --- & --- & --- & --- & --- & --- & --- & --- & --- & $0$ & $0$ & $77$ & $0$ & $9844$ & $0$\\
$36$ & --- & --- & --- & --- & --- & --- & --- & --- & --- & --- & --- & --- & --- & --- & --- & $0$ & $3$ & $54$ & $1378$ & $3790$\\
$38$ & --- & --- & --- & --- & --- & --- & --- & --- & --- & --- & --- & --- & --- & --- & --- & --- & $0$ & $0$ & $113$ & $0$\\
$40$ & --- & --- & --- & --- & --- & --- & --- & --- & --- & --- & --- & --- & --- & --- & --- & --- & --- & $0$ & $5$ & $43$\\
$42$ & --- & --- & --- & --- & --- & --- & --- & --- & --- & --- & --- & --- & --- & --- & --- & --- & --- & --- & $0$ & $0$\\
$44$ & --- & --- & --- & --- & --- & --- & --- & --- & --- & --- & --- & --- & --- & --- & --- & --- & --- & --- & --- & $0$\\
\hline
\hline
$\Sigma$ & $1$ & $1$ & $4$ & $5$ & $17$ & $22$ & $144$ & $101$ & $1115$ & $496$ & $4793$ & $3132$ & $46485$ & $14491$ & $214032$ & $85234$ & $1811276$ & $693414$ & $7376081$ & $3281206$\\
\hline 
\end{tabular}
\caption{The number of Cayley nut graphs of a given order $n$ and degree $d$.}
\label{table:cay}
\end{sidewaystable}

\begin{figure}[H]
\centering
\subcaptionbox{The unique $4$-regular vertex-transitive nut graph of order $8$, which is isomorphic to $\Cay(\mathbb{Z}_8, \{ \pm 1, \pm 2 \})$.}[0.47\textwidth]
{
    \centering
    \begin{tikzpicture}
        \tikzstyle{vertex}=[draw,circle,font=\scriptsize,minimum size=4pt,inner sep=1pt,fill=black]
        \tikzstyle{edge}=[draw,thick]

        \foreach \i in  {0,...,7} {
            \node[vertex] (v\i) at ({45*\i+22.5}:1.5) {};
        }
        \foreach \i in  {0,...,7} {
        	\pgfmathtruncatemacro{\j}{mod(\i + 1, 8)}
        	\pgfmathtruncatemacro{\k}{mod(\i + 2, 8)}
        	\path[edge] (v\i) -- (v\k);
        	\path[edge] (v\i) -- (v\j);
        }
    \end{tikzpicture}
}
\qquad
\subcaptionbox{The unique $4$-regular vertex-transitive nut graph of order $10$, which is isomorphic to $\Cay(\mathbb{Z}_{10}, \{ \pm 1, \pm 2 \})$.}[0.47\textwidth]
{
    \centering
    \begin{tikzpicture}
        \tikzstyle{vertex}=[draw,circle,font=\scriptsize,minimum size=4pt,inner sep=1pt,fill=black]
        \tikzstyle{edge}=[draw,thick]

        \foreach \i in  {0,...,9} {
            \node[vertex] (v\i) at ({36*\i}:1.5) {};
        }
        \foreach \i in  {0,...,9} {
        	\pgfmathtruncatemacro{\j}{mod(\i + 1, 10)}
        	\pgfmathtruncatemacro{\k}{mod(\i + 2, 10)}
        	\path[edge] (v\i) -- (v\k);
        	\path[edge] (v\i) -- (v\j);
        }
    \end{tikzpicture}
}
\caption{The unique $4$-regular vertex-transitive nut graph of order $8$ and $10$.}
\label{vt_1}
\end{figure}

\vspace{-1.4cm}

\begin{figure}[H]
\centering
\subcaptionbox{The unique $6$-regular vertex-transitive nut graph of \linebreak order $12$, which is isomorphic to $\Cay(\Dih(6), \{ r^{\pm 1}, \linebreak r^3, s, r^2 s, r^3 s \})$. The graph contains three cliques represented by shaded regions; edges within cliques are not drawn. Source:\ \cite[Figure~11]{BaFoPi2024}.}[0.47\textwidth]
{
    \centering
    \begin{tikzpicture}[scale=0.72]
        \definecolor{mygreen}{RGB}{19, 56, 190}
        \definecolor{sapphire}{RGB}{82, 178, 191}
        \draw[rotate=60,,color=sapphire,fill=sapphire!15!white] (0, 0) circle (1.5cm and 0.7cm);
        \draw[rotate around={-60:(-3, 0)},color=sapphire,fill=sapphire!15!white] (-3, 0) circle (1.5cm and 0.7cm);
        \draw[rotate around={0:(120:3)},color=sapphire,fill=sapphire!15!white] (120:3) circle (1.5cm and 0.7cm);
        \tikzstyle{vertex}=[draw,circle,font=\scriptsize,minimum size=6pt,inner sep=1pt,fill=mygreen]
        \tikzstyle{edge}=[draw,thick]
        \foreach \x/\cc/\fc in {0/black/black,1/black/black,2/black/black,3/black/black} {
             \node[vertex,fill=\cc] (u\x) at ($ (120:3) + (0.7 * \x, 0) - (1.05, 0)  $) {};
             \node[vertex,fill=\fc] (v\x) at ($ (0:0) + ({0.7 * cos(60) * \x}, {0.7 * sin(60) * \x}) - (60:1.05)  $) {};
             \node[vertex,fill=\cc] (w\x) at ($ (-3,0) + ({0.7 * cos(120) * \x}, {0.7 * sin(120) * \x}) - (120:1.05)  $) {};
        }
           \path[edge] (v0) -- (u2);
           \path[edge] (v1) -- (u3);
           \path[edge] (v2) -- (u0);
           \path[edge] (v2) -- (u2);
           \path[edge] (v3) -- (u1);
           \path[edge] (v3) -- (u3);
          \path[edge] (u3) -- (w2);
           \path[edge] (u2) -- (w3);
           \path[edge] (u1) -- (w0);
           \path[edge] (u1) -- (w2);
           \path[edge] (u0) -- (w1);
           \path[edge] (u0) -- (w3);
           \path[edge] (v3) -- (w1);
           \path[edge] (v2) -- (w0);
           \path[edge] (v1) -- (w3);
           \path[edge] (v1) -- (w1);
           \path[edge] (v0) -- (w2);
           \path[edge] (v0) -- (w0);
    \end{tikzpicture}
}
\qquad
\subcaptionbox{The complement of the unique $8$-regular vertex-transitive nut graph of order $12$, which is isomorphic to $\Cay(\Dih(6), \{ r^{\pm 1}, s \})$, i.e., the prism graph $C_6 \Osq K_2$. Source: \cite[Figure~1]{Damnjanovic2025_ADAM}.}[0.47\textwidth]
{
    \centering
    \begin{tikzpicture}[scale=0.9]
        \tikzstyle{vertex}=[draw,circle,font=\scriptsize,minimum size=4pt,inner sep=1pt,fill=black]
        \tikzstyle{edge}=[draw,thick]

        \foreach \i in  {0,...,5} {
            \node[vertex] (v\i) at ({60*\i}:1.2) {};
        }
        \foreach \i in  {6,...,11} {
            \node[vertex] (v\i) at ({60*\i}:2) {};
        }
        
        \foreach \i in  {0,...,5} {
        	\pgfmathtruncatemacro{\j}{mod(\i + 1, 6)};
        	\path[edge] (v\i) -- (v\j);
        }
        \foreach \i in  {6,...,11} {
        	\pgfmathtruncatemacro{\j}{6 + mod(\i + 1, 6)};
        	\path[edge] (v\i) -- (v\j);
        }
        \foreach \i in  {0,...,5} {
        	\pgfmathtruncatemacro{\j}{\i + 6};
        	\path[edge] (v\i) -- (v\j);
        }
    \end{tikzpicture}
}
\caption{The unique $6$- and $8$-regular vertex-transitive nut graph of order $12$ (drawn as the graph or its complement).}
\label{vt_2}
\end{figure}

\vspace{-1.2cm}

\begin{figure}[H]
\centering
\subcaptionbox{The complement of the unique $10$-regular Cay\-ley nut graph of order $16$, which is isomorphic to\linebreak $\Cay(\Dih(8), \{ r^4, r s, r^5 s, r^6 s, r^7 s\})$.\label{vt_3a}}[0.47\textwidth]
{
    \centering
    \begin{tikzpicture}[scale=0.9]
        \tikzstyle{vertex}=[draw,circle,font=\scriptsize,minimum size=4pt,inner sep=1pt,fill=black]
        \tikzstyle{edge}=[draw,thick]

        \foreach \i in  {0,2,4,6} {
            \node[vertex] (v\i) at ({45*\i-15+45}:1.5) {};
        }
        \foreach \i in  {1,3,5,7} {
            \node[vertex] (v\i) at ({45*\i-30+45}:1.5) {};
        }
        \foreach \i in  {8,10,12,14} {
            \node[vertex] (v\i) at ({45*\i-20+45}:2.25) {};
        }
        \foreach \i in  {9,11,13,15} {
            \node[vertex] (v\i) at ({45*\i-25+45}:2.25) {};
        }

        \foreach \i in  {0,2,4,6} {
        	\pgfmathtruncatemacro{\j}{8 + \i};
        	\pgfmathtruncatemacro{\k}{8 + mod(\i + 1, 8)};
            
        	\path[edge] (v\i) -- (v\j);
            \path[edge] (v\i) -- (v\k);
        }
        \foreach \i in  {1,3,5,7} {
        	\pgfmathtruncatemacro{\j}{8 + \i};
        	\pgfmathtruncatemacro{\k}{8 + mod(\i - 1, 8)};
            
        	\path[edge] (v\i) -- (v\j);
            \path[edge] (v\i) -- (v\k);
        }

        \foreach \i in  {0,2,4,6} {
        	\pgfmathtruncatemacro{\j}{mod(\i + 5, 8)};
            
        	\path[edge] (v\i) -- (v\j);
        }
        \foreach \i in  {1,3,5,7} {
        	\pgfmathtruncatemacro{\j}{mod(\i + 3, 8)};
            
        	\path[edge] (v\i) -- (v\j);
        }
        \foreach \i in  {8,10,12,14} {
        	\pgfmathtruncatemacro{\j}{8 + mod(\i + 5, 8)};
            
        	\path[edge] (v\i) -- (v\j);
        }
        \foreach \i in  {9,11,13,15} {
        	\pgfmathtruncatemacro{\j}{8 + mod(\i + 3, 8)};
            
        	\path[edge] (v\i) -- (v\j);
        }

        \foreach \i in  {0,1,2,3,4,6,7} {
        	\pgfmathtruncatemacro{\j}{mod(\i + 1, 8)};
            
        	\path[edge] (v\i) -- (v\j);
        }
        \path[edge] (v5) -- (v14);
        \foreach \i in  {8,9,10,11,12,14,15} {
        	\pgfmathtruncatemacro{\j}{8 + mod(\i + 1, 8)};
            
        	\path[edge] (v\i) -- (v\j);
        }
        \path[edge] (v13) -- (v6);
    \end{tikzpicture}
}
\qquad
\subcaptionbox{The complement of the unique $10$-regular non-Cayley vertex-transitive nut graph of order $16$.\label{vt_3b}}[0.47\textwidth]
{
    \centering
    \begin{tikzpicture}[scale=0.9]
        \tikzstyle{vertex}=[draw,circle,font=\scriptsize,minimum size=4pt,inner sep=1pt,fill=black]
        \tikzstyle{edge}=[draw,thick]

        \foreach \i in  {0,2,4,6} {
            \node[vertex] (v\i) at ({45*\i+30}:1.2) {};
        }
        \foreach \i in  {1,3,5,7} {
            \node[vertex] (v\i) at ({45*\i+15}:1.2) {};
        }
        \foreach \i in  {8,10,12,14} {
            \node[vertex] (v\i) at ({45*\i+35}:2.5) {};
        }
        \foreach \i in  {9,11,13,15} {
            \node[vertex] (v\i) at ({45*\i+10}:2.5) {};
        }

        \foreach \i in  {0,2,4,6} {
        	\pgfmathtruncatemacro{\j}{8 + \i};
        	\pgfmathtruncatemacro{\k}{8 + mod(\i + 1, 8)};
            
        	\path[edge] (v\i) -- (v\j);
            \path[edge] (v\i) -- (v\k);
        }
        \foreach \i in  {1,3,5,7} {
        	\pgfmathtruncatemacro{\j}{8 + \i};
        	\pgfmathtruncatemacro{\k}{8 + mod(\i - 1, 8)};
            
        	\path[edge] (v\i) -- (v\j);
            \path[edge] (v\i) -- (v\k);
        }

        \foreach \i in {8,10,14} {
        	\pgfmathtruncatemacro{\j}{8 + mod(\i + 1, 8)};
        	\pgfmathtruncatemacro{\k}{8 + mod(\i + 3, 8)};
            
        	\path[edge] (v\i) -- (v\j);
            \path[edge] (v\i) -- (v\k);
        }
        \path[edge] (v12) -- (v13);
        \path[edge] (v12) -- (v14);
        \foreach \i in {9,11,15} {
        	\pgfmathtruncatemacro{\j}{8 + mod(\i + 1, 8)};
            
        	\path[edge] (v\i) -- (v\j);
        }
        \path[edge] (v13) -- (v15);

        \path[edge] (v0) -- (v1);
        \path[edge] (v0) -- (v3);
        \path[edge] (v0) -- (v5);
        \path[edge] (v1) -- (v2);
        \path[edge] (v1) -- (v4);
        \path[edge] (v2) -- (v3);
        \path[edge] (v2) -- (v7);
        \path[edge] (v3) -- (v6);
        \path[edge] (v4) -- (v5);
        \path[edge] (v4) -- (v6);
        \path[edge] (v5) -- (v7);
        \path[edge] (v6) -- (v7);
    \end{tikzpicture}
}
\caption{The complements of the only two $10$-regular vertex-transitive nut graphs of order $16$.}
\label{vt_3}
\end{figure}

\vspace{-0.1cm}

\begin{figure}[H]
\centering
\subcaptionbox{The M\"{o}bius ladder of order $24$.}[0.47\textwidth]
{
    \centering
    \begin{tikzpicture}[scale=0.95]
        \tikzstyle{vertex}=[draw,circle,font=\scriptsize,minimum size=4pt,inner sep=1pt,fill=black]
        \tikzstyle{edge}=[draw,thick]

        \foreach \i in {0,...,23} {
            \node[vertex] (v\i) at ({15*\i}:1.8) {};
        }
        \foreach \i in {0,...,23} {
        	\pgfmathtruncatemacro{\j}{mod(\i + 1, 24)}
        	\path[edge] (v\i) -- (v\j);
        }
        \foreach \i in  {0,...,11} {
        	\pgfmathtruncatemacro{\j}{\i + 12};
        	\path[edge] (v\i) -- (v\j);
        }
    \end{tikzpicture}
}
\qquad
\subcaptionbox{The Kronecker cover of the D\"{u}rer graph.\label{vt_4b}}[0.47\textwidth]
{
    \centering
    \begin{tikzpicture}[scale=0.95]
        \tikzstyle{vertex}=[draw,circle,font=\scriptsize,minimum size=4pt,inner sep=1pt,fill=black]
        \tikzstyle{edge}=[draw,thick]

        \foreach \i in {0,...,7} {
            \node[vertex] (v\i) at ({45*\i+22.5}:1) {};
        }
        \foreach \i in {8,...,15} {
            \node[vertex] (v\i) at ({45*\i+22.5}:1.5) {};
        }
        \foreach \i in {16,...,23} {
            \node[vertex] (v\i) at ({45*\i+22.5}:2) {};
        }

        \foreach \i in {0,...,7} {
        	\pgfmathtruncatemacro{\j}{mod(\i + 1, 8)};
        	\path[edge] (v\i) -- (v\j);
        }
        \foreach \i in {16,...,23} {
        	\pgfmathtruncatemacro{\j}{16 + mod(\i + 1, 8)};
        	\path[edge] (v\i) -- (v\j);
        }
        \foreach \i in {0,...,15} {
        	\pgfmathtruncatemacro{\j}{\i + 8};
        	\path[edge] (v\i) -- (v\j);
        }
        \foreach \i in {8, 10, 12, 14} {
        	\pgfmathtruncatemacro{\j}{8 + mod(\i + 3, 8)};
        	\path[edge] (v\i) -- (v\j);
        }
    \end{tikzpicture}
}
\caption{The complements of the only two $20$-regular vertex-transitive nut graphs of order $24$, each of which is a Cayley graph.}
\end{figure}

\begin{figure}[H]
\centering
\subcaptionbox{The prism graph $C_{10} \Osq K_2$.}[0.47\textwidth]
{
    \centering
    \begin{tikzpicture}[scale=0.92]
        \tikzstyle{vertex}=[draw,circle,font=\scriptsize,minimum size=4pt,inner sep=1pt,fill=black]
        \tikzstyle{edge}=[draw,thick]

        \foreach \i in {0,...,9} {
            \node[vertex] (v\i) at ({36*\i}:1) {};
        }
        \foreach \i in {10,...,19} {
            \node[vertex] (v\i) at ({36*\i}:2) {};
        }
        
        \foreach \i in {0,...,9} {
        	\pgfmathtruncatemacro{\j}{mod(\i + 1, 10)}
            \pgfmathtruncatemacro{\k}{\i + 10}
        	\path[edge] (v\i) -- (v\j);
        	\path[edge] (v\i) -- (v\k);
        }
        \foreach \i in  {10,...,19} {
        	\pgfmathtruncatemacro{\j}{10 + mod(\i + 1, 10)}
        	\path[edge] (v\i) -- (v\j);
        }
    \end{tikzpicture}
}
\qquad
\subcaptionbox{The cubic hamiltonian graph with the exponential LCF notation $[5, -5]^{10}$.}[0.47\textwidth]
{
    \centering
    \begin{tikzpicture}[scale=0.9]
        \tikzstyle{vertex}=[draw,circle,font=\scriptsize,minimum size=4pt,inner sep=1pt,fill=black]
        \tikzstyle{edge}=[draw,thick]

        \foreach \i in {0,...,19} {
            \node[vertex] (v\i) at ({18*\i+9}:2) {};
        }

        \foreach \i in {0,...,19} {
        	\pgfmathtruncatemacro{\j}{mod(\i + 1, 20)};
        	\path[edge] (v\i) -- (v\j);
        }
        \foreach \i in {0,2,4,6,8,10,12,14,16,18} {
        	\pgfmathtruncatemacro{\j}{mod(\i + 5, 20)};
        	\path[edge] (v\i) -- (v\j);
        }
    \end{tikzpicture}
}
\caption{The complements of the only two $16$-regular vertex-transitive nut graphs of order $20$, each of which is a Cayley graph.}
\label{vt_5}
\end{figure}

\section{Conclusion}\label{sc_conclusion}

Theorem \ref{main_th} completely resolves the vertex-transitive (resp.\ Cayley) nut graph order--degree existence problem, thus providing the solution to Problem \ref{vt_problem} and an inverse result for Theorem \ref{base_vt_th}. Its results can be alternatively stated as follows.

\begin{corollary}
    For any $n \in \mathbb{N}$ and $d \in \mathbb{N}_0$, there exists a $d$-regular vertex-transitive nut graph of order $n$ if and only if:
    \begin{enumerate}[label=\textbf{(\roman*)}]
        \item $n$ and $d$ are both even, with at least one of them divisible by four; and
        \item $d \ge 4$ and $n \ge d + 4$.
    \end{enumerate}
\end{corollary}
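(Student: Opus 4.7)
The plan is to deduce the corollary as a direct restatement of Theorem \ref{main_th} by a case analysis on $d \bmod 4$. Both the forward and reverse implications reduce to checking that the piecewise description of $\mathfrak{N}_d^{\mathrm{VT}}$ matches conditions (i) and (ii). The only substantive step is a small congruence argument showing that, under the divisibility constraint in (i), the bound $n \ge d + 4$ automatically strengthens to $n \ge d + 6$ in the relevant subcase.

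For the forward direction, I would assume a $d$-regular vertex-transitive nut graph of order $n$ exists. Theorem \ref{main_th} immediately gives $d$ even and $d \ge 4$, so the first half of (ii) holds. If $4 \mid d$ then condition (i) is witnessed by $d$, and Theorem \ref{main_th} yields $n$ even with $n \ge d + 4$. If instead $d \equiv 2 \pmod 4$, then Theorem \ref{main_th} forces $4 \mid n$ (so (i) is witnessed by $n$) and $n \ge d + 6 \ge d + 4$, giving (ii).

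For the reverse direction, I would assume (i) and (ii) and split on $d \bmod 4$. If $4 \mid d$, then $d \ge 4$ from (ii), and (i) only requires $n$ even; together with $n \ge d + 4$, this puts $n$ into the set prescribed by Theorem \ref{main_th}. If $4 \nmid d$, then (i) forces $d$ even and $4 \mid n$, so $d \equiv 2 \pmod 4$; combined with $d \ge 4$ from (ii), this gives $d \ge 6$. Here comes the only real check: since $n - d \equiv 2 \pmod 4$ and $n - d \ge 4$, the smallest admissible value of $n - d$ is $6$, so $n \ge d + 6$, and Theorem \ref{main_th} again applies.

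There is essentially no obstacle beyond this congruence observation; the proof is a short verification that the three-case description in Theorem \ref{main_th} is equivalent to the two-line characterization in the corollary.
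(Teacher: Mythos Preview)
Your proof is correct and follows essentially the same approach as the paper: the corollary is presented there as an immediate restatement of Theorem~\ref{main_th}, with no explicit proof given. You have simply spelled out the equivalence, including the one small observation the paper leaves implicit---that when $4\mid n$ and $d\equiv 2\pmod 4$, the bound $n\ge d+4$ automatically upgrades to $n\ge d+6$ via the congruence $n-d\equiv 2\pmod 4$.
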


\begin{corollary}
    For any $n \in \mathbb{N}$ and $d \in \mathbb{N}_0$, there exists a $d$-regular Cayley nut graph of order $n$ if and only if:
    \begin{enumerate}[label=\textbf{(\roman*)}]
        \item $n$ and $d$ are both even, with at least one of them divisible by four; and
        \item $d \ge 4$ and $n \ge d + 4$.
    \end{enumerate}
\end{corollary}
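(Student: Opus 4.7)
The plan is to deduce this corollary as a direct reformulation of Theorem \ref{main_th}, which already describes $\mathfrak{N}_d^{\mathrm{Cay}}$ in three cases; what remains is a bookkeeping exercise to match that three-case description with the compact biconditional in the statement. First I would handle the forward direction: given a $d$-regular Cayley nut graph of order $n$, Theorem \ref{main_th} places $(n,d)$ in one of the two nonempty cases, and in each the conclusions (i) and (ii) are immediate ($4 \mid d$ or $4 \mid n$ giving the divisibility clause of (i), both $n$ and $d$ even in either case, and $n \ge d+4$ being either the stated bound or a consequence of $n \ge d+6$).

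For the converse, I would assume (i) and (ii). Condition (i) gives $n$ and $d$ both even with $4 \mid d$ or $4 \mid n$, while (ii) gives $d \ge 4$ and $n \ge d+4$. Then I would split on whether $4 \mid d$. When $4 \mid d$, the first case of Theorem \ref{main_th} applies directly, since $n$ is even and $n \ge d+4$. When $4 \nmid d$, we have $d \equiv 2 \pmod 4$, forcing $d \ge 6$ (because $d \ge 4$ and $d \ne 4$), and (i) then forces $4 \mid n$. Existence via the second case of Theorem \ref{main_th} requires $n \ge d+6$, which follows at once from $n \ge d+4$ together with $4 \mid n$ and $d+4 \equiv 2 \pmod 4$.

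The only step with any real content, and hence the closest thing to an obstacle, is that last observation: when $d \equiv 2 \pmod 4$ and $4 \mid n$, the bound $n \ge d+4$ automatically sharpens to $n \ge d+6$, because no multiple of $4$ lies strictly between $d+4$ and $d+6$. This single fact bridges the apparent discrepancy between the bound in (ii) and the bound appearing in the second case of Theorem \ref{main_th}, and it is essentially the only nontrivial content of the reformulation.
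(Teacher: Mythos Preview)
Your proposal is correct and matches the paper's approach: the paper presents this corollary as an immediate restatement of Theorem~\ref{main_th} with no separate proof, and your argument spells out precisely the routine case analysis that makes the equivalence explicit. The observation you flag---that $d\equiv 2\pmod 4$, $4\mid n$, and $n\ge d+4$ together force $n\ge d+6$---is indeed the only thing to check, and you handle it correctly.
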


All the constructions used in Section \ref{sc_main} relied on Cayley graphs based on dihedral groups. In \cite{Damnjanovic2025_ADAM}, it was shown that for any $d \ge 8$ such that $8 \mid d$, the graph $\overline{C_\frac{d + 4}{2} \Osq K_2} \cong \overline{\Cay(\Dih(\frac{d + 4}{2}), \{ r^{\pm 1}, s \})}$ is a $d$-regular Cayley nut graph of order $d + 4$. Besides, it is not difficult to verify by using, e.g., \texttt{SageMath}, that $\Cay(\Dih(8), \{ r^{\pm 1}, r^{\pm 2}, r^{\pm 3}, s, r^2 s \})$ is an $8$-regular Cayley nut graph of order $16$. With all of this in mind together with Theorem \ref{base_circ_th}, we reach the next result.

\begin{theorem}\label{suff_th}
    Suppose that $n \in \mathbb{N}$ and $d \in \mathbb{N}_0$ are such that:
    \begin{enumerate}[label=\textbf{(\roman*)}]
        \item $n$ and $d$ are both even, with at least one of them divisible by four; and
        \item $d \ge 4$ and $n \ge d + 4$.
    \end{enumerate}
    Then there is a $d$-regular Cayley nut graph of order $n$ for the cyclic or dihedral group.
\end{theorem}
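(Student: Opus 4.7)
The plan is a case analysis on the divisibility of $d$ that reassembles pieces already established earlier in the paper. Conditions (i) and (ii) split the admissible pairs $(n, d)$ into two regimes: either $4 \mid d$, $d \ge 4$ and $n$ is even with $n \ge d + 4$; or $d \equiv 2 \pmod 4$, $d \ge 6$, $4 \mid n$, and $n \ge d + 6$ (the stronger lower bound on $n$ following because $d + 4 \equiv 2 \pmod 4$ in this subregime).

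For the regime $4 \mid d$, I would first invoke Theorem \ref{base_circ_th}, which covers every admissible pair by a circulant nut graph except for the order $n = d + 4$ when $8 \mid d$, together with the isolated pair $(n, d) = (16, 8)$. The missing order $n = d + 4$ is then handled by the dihedral prism complement $\overline{C_{(d+4)/2} \Osq K_2} \cong \overline{\Cay(\Dih((d+4)/2), \{r^{\pm 1}, s\})}$ recalled from \cite{Damnjanovic2025_ADAM}, and the pair $(16, 8)$ by the explicit dihedral Cayley graph $\Cay(\Dih(8), \{r^{\pm 1}, r^{\pm 2}, r^{\pm 3}, s, r^2 s\})$; both of these are named in the paragraph immediately preceding the statement of the theorem.

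For the regime $d \equiv 2 \pmod 4$, I would appeal directly to the construction phase of Section \ref{sc_main}. Propositions \ref{construction_1_prop} and \ref{construction_2_prop} supply dihedral Cayley nut graphs of every sufficiently large order, covering the subcases $d \equiv 6 \pmod 8$ and $d \equiv 2 \pmod 8$ respectively; Propositions \ref{construction_3_prop}--\ref{construction_5_prop} cover the small orders $n \in \{d + 6, d + 10, d + 14\}$ for $d$ large enough; and the remaining finite list of pairs $(12, 6)$, $(16, 10)$, $(20, 10)$, $(24, 10)$, $(28, 18)$, $(32, 18)$ is realized by the named dihedral Cayley constructions at the end of that section.

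Since every construction invoked is a Cayley graph over either $\mathbb{Z}_n$ or some $\Dih(n/2)$, assembling the cases yields the conclusion. I do not anticipate a genuine obstacle here: all spectral and cyclotomic-divisibility work has already been completed in Sections \ref{sc_aux} and \ref{sc_main}, so the proof reduces to pure bookkeeping, and the only care needed is to verify that no admissible pair at the bottom of the range $n \in \{d + 4, d + 6, d + 10, d + 14\}$ or in the named exceptional list is accidentally left uncovered.
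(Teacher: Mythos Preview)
Your proposal is correct and matches the paper's own argument essentially line for line: the paper also splits on $4 \mid d$ versus $d \equiv 2 \pmod 4$, uses Theorem~\ref{base_circ_th} plus the prism complement from \cite{Damnjanovic2025_ADAM} and the explicit $\Cay(\Dih(8),\{r^{\pm1},r^{\pm2},r^{\pm3},s,r^2s\})$ for the first regime, and invokes the dihedral constructions of Section~\ref{sc_main} for the second. There is no missing case in your bookkeeping.
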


\noindent
From here, we obtain the following conclusion.

\begin{corollary}
The cyclic and dihedral groups are sufficient to construct Cayley nut graphs that cover all the possible combinations of orders and degrees attainable by a vertex-transitive nut graph.
\end{corollary}

A \emph{bicirculant graph} is a graph that has an automorphism with two orbits of equal size. These graphs are the derived graphs of $\mathbb{Z}_m$-voltage pregraphs of order two; see \cite[Section~3.5]{PiSe2013} and \cite{MaMaPo2004, MaNeSko2000, Pisanski2007, PoTo2020}. As shown in Lemma~\ref{dih_cay_lemma}, the Cayley graphs for dihedral groups are a subclass of the bicirculant graphs. Therefore, it is natural to extend the investigation of the nut property to bicirculant graphs. To this end, we need the next proposition.

\begin{proposition}\label{bicirc_prop}
    For any $d$-regular bicirculant nut graph of order $n$, the following holds:
    \begin{enumerate}[label=\textbf{(\roman*)}]
        \item $n$ and $d$ are both even, with at least one of them divisible by four; and
        \item $d \ge 4$ and $n \ge d + 4$.
    \end{enumerate}
\end{proposition}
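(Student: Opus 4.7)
The plan is to exploit the spectral decomposition from Lemma~\ref{bicirc_lemma} together with the Hermitian structure it forces on a simple bicirculant graph. Writing $n = 2m$ and labelling the vertices along the two orbits of a semiregular automorphism, $A(G)$ takes the block form from Lemma~\ref{bicirc_lemma} with $m \times m$ circulant blocks $A^{(0)}, A^{(1)}, A^{(2)}, A^{(3)}$. Simplicity of $G$ forces $A^{(0)}, A^{(3)}$ to be symmetric with zero diagonal and $A^{(2)} = (A^{(1)})^{\top}$, which in spectral terms says $P_0(\zeta), P_3(\zeta) \in \mathbb{R}$ and $P_2(\zeta) = \overline{P_1(\zeta)}$ at every $m$-th root of unity $\zeta$. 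Hence each block $A_\zeta$ is Hermitian with $A_{\overline{\zeta}} = \overline{A_\zeta}$, so $A_\zeta$ and $A_{\overline{\zeta}}$ share their real spectrum. Since a nut graph satisfies $\eta(G) = 1$, exactly one block contributes a simple $0$-eigenvalue while every other block is invertible; the pairing $\zeta \leftrightarrow \overline{\zeta}$ then forces the singular block to correspond to $\zeta \in \{1, -1\}$.

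I would next read off $A_1$ from $d$-regularity: with $p := P_1(1)$, the row sums give $P_0(1) = P_3(1) = d - p$, and so the eigenvalues of $A_1$ are $d$ and $d - 2p$. Whenever the singular block is $A_1$, this yields $d = 2p$, so $d$ is even. If $m$ is odd, then $A_{-1}$ is not present and the singular block must be $A_1$; counting edges in the $(d-p)$-regular subgraph induced on the first orbit shows that $m(d-p)$ is even, whence $d - p = p$ is even and $4 \mid d$, while $n \equiv 2 \pmod 4$. If $m$ is even, then $4 \mid n$ already, and the only remaining scenario is that the singular block is $A_{-1}$ with $A_1$ invertible; writing
\[
A_{-1} = \begin{bmatrix} a & b \\ b & c \end{bmatrix}, \qquad a = P_0(-1),\ b = P_1(-1),\ c = P_3(-1) \in \mathbb{Z},
\]
the determinant identity $ac = b^2$ reduces modulo $2$ via $a \equiv c \equiv d - p$ and $b \equiv p$ to $(d - p)^2 \equiv p^2 \pmod 2$, which collapses to $d \equiv 0 \pmod 2$. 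In every case (i) follows.

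For (ii), part (i) already gives $d$ even; $d = 0$ is trivial and $d = 2$ would force $G$ to be a single cycle (since every nut graph is connected), but no cycle is a nut graph, so $d \ge 4$. Then $n \ne d$ and $n \ne d + 1$ because $G$ is not complete, $n \ne d + 3$ because $n$ and $d$ are both even, and $n \ne d + 2$ because otherwise $\overline{G}$ would be a perfect matching and Lemma~\ref{reg_comp_lemma} would give $\eta(G) \ge n/2 \ge 2$, contradicting $\eta(G) = 1$. Hence $n \ge d + 4$. The one delicate step is the $A_{-1}$ subcase, where the parity of $d$ must be squeezed out of the single identity $\det(A_{-1}) = 0$; everything else is essentially bookkeeping once the Hermitian pairing $A_\zeta \leftrightarrow A_{\overline{\zeta}}$ has been set up.
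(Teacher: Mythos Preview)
Your proof is correct and follows essentially the same route as the paper's: block-diagonalise via Lemma~\ref{bicirc_lemma}, use the Hermitian pairing $A_\zeta \leftrightarrow A_{\overline\zeta}$ to pin the singular block to $\zeta \in \{1,-1\}$, and then read off the parity constraints. Your $\det A_{-1} \equiv 0 \pmod 2$ argument and your edge-count for the $m$ odd case are more explicit versions of what the paper leaves as ``not difficult to see'' and a remark on self-inverse elements in odd cyclic groups, respectively.
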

\begin{proof}
    Let $G$ be a $d$-regular bicirculant nut graph of order $n$. Observe that $A(G)$ has the form
    \[
        \begin{bmatrix}
            C_0 & C_1^\intercal \\
            C_1 & C_2\\
        \end{bmatrix},
    \]
    where $C_1$ is a binary circulant matrix, while $C_0$ and $C_2$ are binary symmetric circulant matrices with zero diagonal. Let $n = 2m$, so that $C_0, C_1, C_2 \in \mathbb{R}^{m \times m}$, and let $S_0$, $S_1$ and $S_2$ be the connection sets of $C_0$, $C_1$ and $C_2$, respectively. Now, for each $m$-th root of unity $\zeta$, let
    \[
        A_\zeta = \begin{bmatrix}
            \sum_{j \in S_0} \zeta^j & \sum_{j \in S_1} \zeta^{-j}\\
            \sum_{j \in S_1} \zeta^j & \sum_{j \in S_2} \zeta^j
        \end{bmatrix} .
    \]
    By arguing analogously to Lemma \ref{dih_cay_lemma}, it follows that exactly one of the $A_\zeta$ matrices has a simple eigenvalue zero, while all the others are invertible.
    
    Let $\zeta_0$ be the unique $m$-th root of unity such that $A_{\zeta_0}$ has an eigenvalue zero. We trivially observe that $\zeta_0$ must be real, since otherwise both $A_{\zeta_0}$ and $A_{\overline{\zeta_0}}$ would have an eigenvalue zero. Note that $d = |S_0| + |S_1| = |S_2| + |S_1|$ and
    \[
        A_1 = \begin{bmatrix}
            |S_0| & |S_1|\\
            |S_1| & |S_0|
        \end{bmatrix} .
    \]
    Regardless of whether $\zeta_0 = 1$ or $\zeta_0 = -1$, it is not difficult to see that $|S_0|$ and $|S_1|$ are of the same parity, which means that $d$ is even. Since $\mathfrak{N}_0^{\mathrm{reg}} = \mathfrak{N}_2^{\mathrm{reg}} = \varnothing$, we get $d \ge 4$. Also, the only $d$-regular graph of order $d + 2$ is $\overline{\frac{d + 2}{2} K_2}$, hence
    \[
        \eta\left( \overline{\tfrac{d + 2}{2} K_2} \right) = \tfrac{d + 2}{2} > 1
    \]
    implies that $n \ge d + 4$.

    It remains to prove that $4 \mid n$ or $4 \mid d$. By way of contradiction, suppose that $m$ is odd and $d \equiv 2 \pmod 4$. In this case, $-1$ is not an $m$-th root of unity, hence $\zeta_0 = 1$. Since $A_1$ is noninvertible, we have $|S_0| = |S_1|$, which implies that $|S_0|$ is odd. This yields a contradiction because a (cyclic) group of odd order has no self-inverse element apart from the identity.
\end{proof}

\noindent
As an immediate corollary to Theorem \ref{suff_th} and Proposition \ref{bicirc_prop}, we obtain the following result.

\begin{corollary}\label{bicirc_cor}
    For any $n \in \mathbb{N}$ and $d \in \mathbb{N}_0$, there exists a $d$-regular nut graph of order $n$ that is a circulant or bicirculant graph if and only if:
    \begin{enumerate}[label=\textbf{(\roman*)}]
        \item $n$ and $d$ are both even, with at least one of them divisible by four; and
        \item $d \ge 4$ and $n \ge d + 4$.
    \end{enumerate}
\end{corollary}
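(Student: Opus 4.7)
The plan is to combine the two preceding results with one small observation linking them. The forward direction (sufficiency) is immediate from Theorem~\ref{suff_th}: given any $n$ and $d$ satisfying (i) and (ii), the theorem yields a $d$-regular Cayley nut graph of order $n$ based on either the cyclic group (which is a circulant graph by definition) or the dihedral group (which is bicirculant, as noted in the paragraph preceding Proposition~\ref{bicirc_prop}, since $\Dih(n)$ has the rotation subgroup acting with two orbits of size $n$ on the $2n$ vertices). So the existence half reduces to quoting Theorem~\ref{suff_th}.

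For the reverse direction (necessity), suppose $G$ is a $d$-regular nut graph of order $n$ that is either circulant or bicirculant. If $G$ is bicirculant, Proposition~\ref{bicirc_prop} directly gives conditions (i) and (ii). If $G$ is circulant, Theorem~\ref{base_circ_th} applies: in every nonempty case of that theorem we have $4 \mid d$ (so $d$ and $n$ are both even with $d$ divisible by $4$), $d \ge 4$, and $n \ge d + 4$, which are precisely (i) and (ii). (Alternatively, one could observe that a circulant graph of even order is automatically bicirculant by taking the square of its single-orbit generator, and every circulant nut graph has even order by Theorem~\ref{base_circ_th}, so Proposition~\ref{bicirc_prop} handles this case as well.)

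There is no real obstacle here; the substantive content lives in Theorem~\ref{suff_th} and Proposition~\ref{bicirc_prop}. The only thing to verify is the compatibility check between the necessary conditions in Theorem~\ref{base_circ_th} and the uniform statement (i)--(ii), which is a direct inspection of the four cases in that theorem.
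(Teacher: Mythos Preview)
Your proof is correct and matches the paper's approach: the paper simply states that Corollary~\ref{bicirc_cor} is ``an immediate corollary to Theorem~\ref{suff_th} and Proposition~\ref{bicirc_prop}'' without further argument, and you have filled in exactly the details that justify that claim. Your explicit handling of the circulant case in the necessity direction (either via Theorem~\ref{base_circ_th} or by noting that an even-order circulant is bicirculant) is a useful clarification, since Proposition~\ref{bicirc_prop} as stated only covers bicirculants.
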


Note that bicirculant nut graphs need not be regular. For example, the graph with the adjacency matrix
\[
    \begin{bmatrix}
        C_0 & C_1^\intercal\\
        C_1 & C_2
    \end{bmatrix} ,
\]
with $C_0, C_1, C_2 \in \mathbb{R}^{18}$ having the connection sets
\[
    \{ 1 \}, \quad \{0, 2\} \quad \mbox{and} \quad \{ 1, 2, 3 \},
\]
respectively, is a bicirculant nut graph of order $36$ where the vertices from one orbit are of degree four, while the vertices from the other orbit are of degree eight. Let $\mathfrak{N}_{d_1, d_2}^{\mathrm{bicirc}}$ be the set of all the orders attainable by a bicirculant nut graph where the vertices from the two orbits have degrees $d_1$ and $d_2$, respectively. It is natural to pose the following problem.

\begin{problem}
    For any $d_1, d_2 \in \mathbb{N}_0$, determine the set $\mathfrak{N}_{d_1, d_2}^{\mathrm{bicirc}}$.
\end{problem}

We end the paper with two more corollaries of Theorem \ref{main_th}.

\begin{corollary}\label{main_cor_1}
    For any $d \ge 4$ such that $4 \mid d$, we have
    \[
        \mathfrak{N}_d^{\mathrm{reg}} \supseteq \{ n \in \mathbb{N} : \mbox{$n$ is even and } n \ge d + 4 \} .
    \]
\end{corollary}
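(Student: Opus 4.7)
The plan is to derive this corollary as an immediate consequence of Theorem \ref{main_th} together with the inclusion chain
\[
    \mathfrak{N}_d^{\mathrm{circ}} \subseteq \mathfrak{N}_d^{\mathrm{Cay}} \subseteq \mathfrak{N}_d^{\mathrm{VT}} \subseteq \mathfrak{N}_d^{\mathrm{reg}}
\]
noted in the introduction. Indeed, for $d \ge 4$ with $4 \mid d$, Theorem \ref{main_th} gives
\[
    \mathfrak{N}_d^{\mathrm{VT}} = \{ n \in \mathbb{N} : \text{$n$ is even and } n \ge d + 4 \},
\]
and since every vertex-transitive graph is in particular regular of the same degree and order, this set is contained in $\mathfrak{N}_d^{\mathrm{reg}}$, which is exactly the claimed inclusion.

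There is no real obstacle: the work has already been done in Theorem \ref{main_th}, and the corollary is just a restatement using the weaker containment $\mathfrak{N}_d^{\mathrm{VT}} \subseteq \mathfrak{N}_d^{\mathrm{reg}}$. The proof will be a one- or two-line deduction pointing to Theorem \ref{main_th} and the trivial inclusion.
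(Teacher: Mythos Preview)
Your proposal is correct and matches the paper's approach exactly: the paper presents this corollary as an immediate consequence of Theorem~\ref{main_th}, relying on the trivial inclusion $\mathfrak{N}_d^{\mathrm{VT}} \subseteq \mathfrak{N}_d^{\mathrm{reg}}$ just as you do.
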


\begin{corollary}\label{main_cor_2}
    For any $d \ge 6$ such that $d \equiv 2 \pmod 4$, we have
    \[
        \mathfrak{N}_d^{\mathrm{reg}} \supseteq \{ n \in \mathbb{N} : \mbox{$4 \mid n$ and } n \ge d + 6 \} .
    \]
\end{corollary}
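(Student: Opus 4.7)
The plan is to observe that this corollary is an immediate consequence of Theorem \ref{main_th} combined with the chain of inclusions stated in the introduction, namely $\mathfrak{N}_d^{\mathrm{Cay}} \subseteq \mathfrak{N}_d^{\mathrm{reg}}$. There is no new combinatorial content to extract; the hard work has already been done in establishing the existence of the required Cayley nut graphs.

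More concretely, for any $d \ge 6$ with $d \equiv 2 \pmod 4$, the second inclusion chain from the introduction gives $\mathfrak{N}_d^{\mathrm{Cay}} \subseteq \mathfrak{N}_d^{\mathrm{reg}}$. By Theorem \ref{main_th}, in this regime we have the equality
\[
    \mathfrak{N}_d^{\mathrm{Cay}} = \{ n \in \mathbb{N} : 4 \mid n \mbox{ and } n \ge d + 6 \} .
\]
Combining these two facts yields precisely the claimed inclusion for $\mathfrak{N}_d^{\mathrm{reg}}$.

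There is essentially no obstacle here: every $n$ in the target set is realized by a $d$-regular Cayley nut graph (constructed via the dihedral group in Section~\ref{sc_main}), and every Cayley graph is in particular a regular graph, so it also counts toward $\mathfrak{N}_d^{\mathrm{reg}}$. I would simply write a one- or two-sentence proof citing Theorem \ref{main_th} and the trivial inclusion $\mathfrak{N}_d^{\mathrm{Cay}} \subseteq \mathfrak{N}_d^{\mathrm{reg}}$ noted at the start of the paper.
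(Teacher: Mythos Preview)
Your proposal is correct and matches the paper's own treatment: the paper presents this corollary without a separate proof, simply stating it as an immediate consequence of Theorem~\ref{main_th} (together with the trivial inclusion $\mathfrak{N}_d^{\mathrm{Cay}} \subseteq \mathfrak{N}_d^{\mathrm{reg}}$ noted in the introduction). There is nothing to add.
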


\noindent
Although Corollaries \ref{main_cor_1} and \ref{main_cor_2} give a partial solution to Problem \ref{reg_problem} for the case when $d$ is even, the regular nut graph order--degree existence problem seems much more difficult to solve. Corollary \ref{bicirc_cor} justifies this claim and implies that different constructions not relying on circulant or bicirculant graphs would need to be used to further investigate Problem \ref{reg_problem}.

\acknowledgements
\label{sec:ack}
The author is grateful to Nino Bašić for all of his useful comments and his overall help with the data processing concerning Tables \ref{table:vt} and \ref{table:cay}.

\nocite{*}
\bibliographystyle{abbrvnat}
\bibliography{bibliography}
\label{sec:biblio}

\end{document}